\theoremstyle{definition}
\theoremstyle{remark}
\renewcommand{\todo}[2][]{\tikzexternaldisable\@todo[#1]{#2}\tikzexternalenable}
\pgfplotsset{
	colormap={my basis colormap}{
		rgb255=(0, 114, 189);
		rgb255=(54, 106, 148);
		rgb255=(108, 98, 107);
		rgb255=(163, 91, 66);
		rgb255=(217, 83, 25);
	}
}
\pgfplotsset{
	colormap={my parula}{
		rgb255=(53.0655, 42.406, 134.9460);
		rgb255=(20.2336, 132.6061, 211.9511);
		rgb255=(55.5463, 184.8859, 157.9118);
		rgb255=(208.7187, 186.8470,  89.1966);
		rgb255=(248.9565, 250.6905, 13.7190);
	}
}
\DeclareMathAlphabet{\mathpzc}{OT1}{pzc}{m}{it}
\definecolor{lightgray}{gray}{0.80}
\definecolor{lightgray}{gray}{0.80}
\renewcommand\geq\geqslant
\renewcommand\leq\leqslant
\titleformat{\chapter}[block]
{\normalfont\huge\bfseries}{\thechapter.}{1em}{\huge}
\titlespacing*{\chapter}{0pt}{-19pt}{0pt}
\newcommand{%
	\tikzsetnextfilename{/images/}%
	\input{/scripts/}%
}[2]{%
	\tikzsetnextfilename{#2/images/#1}%
	\input{#2/scripts/#1}%
}
\newcommand{\Bezier}{B\'ezier~}
\theoremstyle{plain}
\newtheorem{theorem}{Theorem}[section]
\newtheorem{proposition}[theorem]{Proposition}
\newtheorem{lemma}[theorem]{Lemma}
\newtheorem{corollary}[theorem]{Corollary}
\theoremstyle{remark}
\newtheorem{remark}[theorem]{Remark}
\newenvironment{example}
{\pushQED{\qed}\examplex}
{\popQED\endexamplex}
\theoremstyle{definition}
\newtheorem{definition}[theorem]{Definition}
\newtheorem{assumption}[theorem]{Assumption}
\newcommand{\mbf}[1]{{\boldsymbol{#1}}}
\newcommand{\RR}{{\mathbb{R}}}
\newcommand{\PP}{{\mathcal{P}}}
\newcommand{\ZZ}{{\mathbb{Z}}}
\newcommand{\ZZP}{\mathbb{Z}_{\geq0}}
\newcommand{\dimwp}[1]{\dim{\left(#1\right)}}
\newcommand{\kerwp}[1]{\ker{\left(#1\right)}}
\newcommand{\moplus}{\mathop{\oplus}}
\newcommand{\isomorphic}{\cong}
\newcommand\reallywidehat[1]{%
	\savestack{\tmpbox}{\stretchto{%
			\scaleto{%
				\scalerel*[\widthof{\ensuremath{#1}}]{\kern-.6pt\bigwedge\kern-.6pt}%
				{\rule[-\textheight/2]{1ex}{\textheight}}
			}{\textheight}%
		}{0.5ex}}%
	\stackon[1pt]{#1}{\tmpbox}%
}
\newcommand\reallywidecheck[1]{%
	\savestack{\tmpbox}{\stretchto{%
			\scaleto{%
				\scalerel*[\widthof{\ensuremath{#1}}]{\kern-.6pt\bigvee\kern-.6pt}%
				{\rule[-\textheight/2]{1ex}{\textheight}}
			}{\textheight}%
		}{0.5ex}}%
	\stackon[1pt]{#1}{\tmpbox}%
}
\def\Transpose #1{\romannumeral0\expandafter
	\Mar@Transpose@a\romannumeral`^^@\Mar@DoOneRow #1\\!\\}
\def\Mar@DoOneRow #1\\{\Mar@DoOneRow@a {}#1&^^@&}%
\def\Mar@DoOneRow@a #1#2&{%
	\if^^@\detokenize{#2}\expandafter\@gobble\fi
	\Mar@DoOneRow@a {#1#2\\}%
}%
\def\Mar@Transpose@a #1#2\\{\ifx!#2\expandafter\Mar@FinishTranspose\fi
	\expandafter\Mar@Transpose@b\romannumeral`^^@\Mar@DoOneRow@a {}#2&^^@&#1}
\def\Mar@Transpose@b #1#2^^@\\{\Mar@Join {}#2^^@!#1}
\def\Mar@Join #1#2\\#3!#4\\%
\def\Mar@EndJoin\Mar@Join #1^^@!^^@\\{\Mar@Transpose@a {#1^^@\\}}
\def\Mar@FinishTranspose
\newcommand{\face}{\sigma}
\newcommand{\edge}{\tau}
\newcommand{\vertex}{\gamma}
\newcommand{\interior}[1]{\accentset{\circ}{#1}}
\newcommand{\mesh}{\mathcal{T}}
\newcommand{\meshF}{\mesh_2}
\newcommand{\meshE}{\mesh_1}
\newcommand{\meshV}{\mesh_0}
\newcommand{\meshInterior}{\interior{\mesh}}
\newcommand{\meshInteriorE}{\meshInterior_1}
\newcommand{\meshInteriorV}{\meshInterior_0}
\newcommand{\boundary}{\partial}
\newcommand{\domain}{\Omega}
\newcommand{\domainInterior}{\interior{\domain}}
\newcommand{\meshEH}{\tensor*[^h]{\mesh}{_1}}
\newcommand{\meshEV}{\tensor*[^v]{\mesh}{_1}}
\newcommand{\meshInteriorEH}{\tensor*[^h]{\meshInterior}{_1}}
\newcommand{\meshInteriorEV}{\tensor*[^v]{\meshInterior}{_1}}
\newcommand{\ncells}[1]{\mathfrak{t}_{#1}}
\newcommand{\degreeu}{m}
\newcommand{\degreev}{m'}
\newcommand{\smooth}{r}
\newcommand{\bsmooth}{\mbf{r}}
\newcommand{\splSpace}{\mathcal{R}}
\newcommand{\bsmoothr}{{\mbf{s}}}
\newcommand{\elem}[1]{[#1]}
\newcommand{\faceE}[1]{\elem{\face_{#1}}}
\newcommand{\edgeE}[1]{\elem{\edge_{#1}}}
\newcommand{\vertexE}[1]{\elem{\vertex_{#1}}}
\newcommand{\ideal}[1][I]{\mathfrak{#1}}
\newcommand{\idealComplex}{\mathcal{I}}
\newcommand{\constantComplex}{\mathcal{C}}
\newcommand{\quotientComplex}{\mathcal{Q}}
\newcommand{\euler}[1]{\chi \left( #1 \right)}
\definecolor{myBlue}{rgb} {0,0.4470,0.7410}
\definecolor{myRed}{rgb} {0.8500,0.3250,0.0980}
\definecolor{myGray1}{rgb} {0,0,0}
\definecolor{myGray2}{rgb} {0.6,0.6,0.6}
\definecolor{myGray3}{rgb} {0.45,0.45,0.45}
\definecolor{myGray4}{rgb} {0.8,0.8,0.8}
\definecolor{myGray5}{rgb} {0.55,0.55,0.55}
\definecolor{myCPlot1}{rgb} {0,    0.4470,    0.7410}
\definecolor{myCPlot2}{rgb} {0.8500,   0.3250,    0.0980}
\definecolor{myCPlot3}{rgb} {0.9290,    0.6940,    0.1250}
\definecolor{myCPlot4}{rgb} {0.4940,    0.1840,    0.5560}
\definecolor{myCPlot5}{rgb} {0.4660,    0.6740,    0.1880}
\definecolor{myCPlot6}{rgb} {0.3010,    0.7450,    0.9330}
\definecolor{myCPlot7}{rgb} {0.6350,    0.0780,    0.1840}
\definecolor{myCPColor}{rgb} {0.9255, 0.6941, 0.355}
\tikzstyle{scpColor}=[circle, fill=myCPColor]
\tikzset{
	show curve controls/.style={
		decoration={
			show path construction,
			curveto code={
				\draw[myGray1,dashed,eThickness]
				(\tikzinputsegmentfirst)
				-- (\tikzinputsegmentsupporta)
				-- (\tikzinputsegmentsupportb)
				-- (\tikzinputsegmentlast)
				;
				\fill[scpColor] (\tikzinputsegmentfirst) circle(3pt);
				\fill[scpColor] (\tikzinputsegmentsupporta) circle(3pt);
				\fill[scpColor] (\tikzinputsegmentsupportb) circle(3pt);
				\fill[scpColor] (\tikzinputsegmentlast) circle(3pt);
				\draw[#1,line width=1pt]
				(\tikzinputsegmentfirst)
				.. controls (\tikzinputsegmentsupporta)
				and (\tikzinputsegmentsupportb) ..
				(\tikzinputsegmentlast);
			}
		},decorate
	}
}
\tikzset{
	on each segment/.style={
		decorate,
		decoration={
			show path construction,
			moveto code={},
			lineto code={
				\path [#1]
				(\tikzinputsegmentfirst) -- (\tikzinputsegmentlast);
			},
			curveto code={
				\path [#1] (\tikzinputsegmentfirst)
				.. controls
				(\tikzinputsegmentsupporta) and (\tikzinputsegmentsupportb)
				..
				(\tikzinputsegmentlast);
			},
			closepath code={
				\path [#1]
				(\tikzinputsegmentfirst) -- (\tikzinputsegmentlast);
			},
		},
	},
	mid arrow/.style={postaction={decorate,decoration={
				markings,
				mark=at position .6 with {\arrow[#1]{Latex}}
	}}},
}
\tikzset{
	bThickness/.style={line width=#1\pgflinewidth},
	bThickness/.default={2},
}
\tikzset{
	eThickness/.style={line width=#1\pgflinewidth},
	eThickness/.default={0.5},
}
\tikzset{cross/.style={cross out, draw, 
		minimum size=2*(#1-\pgflinewidth), 
		inner sep=0pt, outer sep=0pt}}
			\def\dx{#4*1.0/3}
			\def\dx{#4/3}
			\def\dx{#4/3}
			\pgfmathsetmacro{\ncp}{#1+1}
			\pgfmathsetmacro{\nel}{#1}
					\pgfmathsetmacro{\xl}{#2+(\i-1)*#4/#1}
					\pgfmathsetmacro{\xr}{#2+(\i)*#4/#1}
					\pgfmathsetmacro{\yt}{#3+(\j)*#4/#1}
					\pgfmathsetmacro{\yb}{#3+(\j-1)*#4/#1}
							\pgfmathsetmacro{\x}{(\xl*(3-\ii)+\xr*(\ii))/3}
							\pgfmathsetmacro{\y}{(\yb*(3-\jj)+\yt*(\jj))/3}
			\pgfmathtruncatemacro{\p}{#5-1}
					\pgfmathsetmacro{\a}{(\x-1)/(#5-1)}
					\pgfmathsetmacro{\b}{(\y-1)/(#5-1)}
					\pgfmathsetmacro{\aone}{(1-\a)*(1-\b)}
					\pgfmathsetmacro{\atwo}{(\a)*(1-\b)}
					\pgfmathsetmacro{\athree}{(\a)*(\b)}
					\pgfmathsetmacro{\afour}{(1-\a)*(\b)}
			\pgfmathtruncatemacro{\p}{#3-1}
				\pgfmathsetmacro{\a}{(\x-1)/(#3-1)}
				\pgfmathsetmacro{\aone}{(1-\a)}
				\pgfmathsetmacro{\atwo}{\a}
\def\ps@pprintTitle{%
	\let\@oddhead\@empty
	\let\@evenhead\@empty
	\def\@oddfoot{}%
	\let\@evenfoot\@oddfoot}
\begin{document}
	
	\title{Dimension of polynomial splines of mixed smoothness on T-meshes}
	\corref{cor1}
	\author[delft]{Deepesh Toshniwal}
	\ead{d.toshniwal@tudelft.nl}	
	\cortext[cor1]{Corresponding author}
	\author[swansea]{Nelly Villamizar}
	\ead{n.y.villamizar@swansea.ac.uk}
	\address[delft]{Delft Institute of Applied Mathematics, Delft University of Technology}
	\address[swansea]{Department of Mathematics, Swansea University}

	\begin{abstract}
		In this paper we study the dimension of splines of mixed smoothness on axis-aligned T-meshes.
		This is the setting when different orders of smoothness are required across the edges of the mesh.
		Given a spline space whose dimension is independent of its T-mesh's geometric embedding, we present constructive and sufficient conditions that ensure that the smoothness across a subset of the mesh edges can be reduced while maintaining stability of the dimension.
		The conditions have a simple geometric interpretation.
		Examples are presented to show the applicability of the results on both hierarchical and non-hierarchical T-meshes.
		For hierarchal T-meshes it is shown that mixed smoothness spline spaces that contain the space of PHT-splines (Deng et al., 2008) always have stable dimension.
	\end{abstract}
	
	\begin{keyword}
		splines \sep T-meshes \sep mixed smoothness \sep dimension formula \sep homological algebra
	\end{keyword}
	\maketitle
	
	\section{Introduction}\label{sec:introduction}
Polynomial splines on polyhedral partitions are ubiquitous in approximation theory, geometric modelling, and computational analysis. 
It is customary to ask splines to be $C^r$ smooth across all mesh facets for a fixed choice of $r \in \ZZ_{\geq -1}$ that depends on the intended application.
However, certain applications also require working with splines for which smoothness can be reduced across an arbitrary subset of the mesh facets; e.g., to model non-smooth or even discontinuous geometric features.
Such splines will be said to have mixed smoothness, and they constitute the focus of this article.

\noindent\emph{Example:} 
(Application to fluid flows around thin solids).
Consider the case of a thin solid immersed in an incompressible fluid flow, and a numerical simulation that employs a solid-conforming mesh, i.e., a mesh where the solid is modeled as the union of a subset of the facets.
In general, we would like to use smooth splines for approximating the fluid pressure and velocity fields.
However, unless the discrete pressure field is allowed to be discontinuous across the thin solid, the simulation results would be meaningless.
At the same time, we would like to retain smoothness of the pressure field across the remaining facets.
See \cite{sauer2018monolithic} for an example of such an application.\hfill{$\blacklozenge$}

An appealing feature of splines in applications is the flexibility in
the choice of the underlying meshes.
In particular, there is a rich history of the use of simplicial, quadrilateral and cuboidal meshes for uniform polynomial degrees and a fixed order of global smoothness, see e.g., \cite{cirak2000subdivision,hughes2005isogeometric}.
Univariate spline spaces and the construction of a suitable spline basis for them, called the B-spline basis, are well understood, see \cite{deboor1978practical} for example.
A spline basis for tensor product spline spaces can be easily defined on tensor product quadrilateral meshes by taking tensor products of univariate B-splines; this process can be directly extended to higher dimensions for building multivariate spline spaces.
A comprehensive overview of splines on triangulations can be found in \cite{lai2007spline} and the references therein.

When applications require the resolution of the spline space to be increased on a subset of the mesh faces, the most common approach is to employ local subdivision.
Spline constructions on such locally subdivided meshes have been proposed in \cite{schumaker2012splines} for triangulations and in \cite{sederberg2003t,giannelli2012,dokken2013} for quadrilateral meshes, among others.
We will focus on the case of locally subdivided quadrilateral meshes, the so-called T-meshes. Examples of such meshes will be discussed in Section \ref{sec:examples}.

The study of multivariate splines, and bivariate splines on T-meshes in particular, poses an interesting challenge as the spline space dimension can depend on the geometric embedding of the mesh \cite{li_instability_2011}.
In practice, identifying meshes where the dimension is stable -- i.e., free from this dependence -- is useful for avoiding cases where spline spaces on combinatorially and topologically equivalent meshes have different dimensions.
Several techniques have been used for studying the dimension of multivariate splines.
We will do so for splines of mixed smoothness using the homology-based approach introduced in \cite{billera1988homology}, and therefore in the following we stick to a brief discussion of the same.

By interpreting splines as the top homology of a chain complex, Billera \cite{billera1988homology} used tools from homological algebra for studying the dimension of splines.
Modifications of the complexes proposed by Schenck and Stillman \cite{schenck1997family,schenck1997local} have since been used by Mourrain and Villamizar \cite{mourrain2013homological} for bounding the spline space dimension on simplicial meshes in two and three dimensions.
Schenck and Sorokina \cite{schenck_subdivision_2018} have recently studied the problem on simplicial meshes where one maximal face has been subdivided.
On T-meshes, Mourrain \cite{mourrain2014dimension} provided bounds on the dimension of bi-degree $(\degreeu,\degreev)$ splines.
Generalizations of the bounds from \cite{mourrain2013homological,mourrain2014dimension} to splines with local polynomial degree adaptivity been recently provided in \cite{toshniwal_polynomial_2018,toshniwal_polynomial_2019}.

Let $\splSpace^\bsmooth_{\degreeu\degreev}$ denote the space of bi-degree $(\degreeu, \degreev)$ splines that are $\bsmooth(\edge)$ smooth across mesh edge $\edge$.
As stated above, we will use homology-based techniques similar to the ones used in \cite{billera1988homology,schenck1997local,mourrain2014dimension} to study the dimension of $\splSpace^\bsmooth_{\degreeu\degreev}$.
Then, given that $\splSpace^\bsmooth_{\degreeu\degreev}$ has stable dimension, we provide sufficient conditions for preservation of this stability when the desired orders of smoothness are decreased across a subset of the mesh edges.
Let us denote this latter spline space with $\splSpace^\bsmoothr_{\degreeu\degreev}$, with $\bsmoothr(\edge) \leq \bsmooth(\edge)$ for all edges $\edge$.
Note that in general the results proposed in \cite{mourrain2014dimension} cannot be applied to compute the dimension of $\splSpace^\bsmoothr_{\degreeu\degreev}$.
This is because they require the smoothness across all horizontal (resp. vertical) edges that form a connected union to be the same; we do not impose the same restriction here.
Instead of studying $\splSpace^\bsmoothr_{\degreeu\degreev}$ from scratch, we use information from $\splSpace^\bsmooth_{\degreeu\degreev}$ to considerably simplify the problem.
In particular, in Section \ref{sec:dimension} we provide sufficient conditions that ensure that the dimension of $\splSpace^\bsmoothr_{\degreeu\degreev}$ can be computed combinatorially using local information only.
The conditions are constructive in nature and have a simple geometric interpretation.
Application of the results to both hierarchical and non-hierarchical T-meshes are presented in Section \ref{sec:examples}.

	\section{Preliminaries: splines, meshes and homology}\label{sec:preliminaries}

This section will introduce the relevant notation that we will use for working with polynomial splines on T-meshes.

\subsection{Splines on T-meshes}\label{ss:meshes_and_splines}
\begin{definition}[T-mesh]\label{def:tmesh}
	A T-mesh $\mesh$ of $\RR^2$ is defined as:
	\begin{itemize}
		\item a finite collection $\meshF$ of axis-aligned rectangles $\face$ that we consider as open sets of $\RR^2$ having non-zero measure, called $2$-cells or faces, together with
		\item a finite set $\meshE$ of closed axis-aligned segments $\edge$, called $1$-cells, which are edges of the (closure of the) faces $\sigma\in\meshF$, and
		\item the set $\meshV$, of vertices $\vertex$, called $0$-cells,  of the edges $\tau\in\meshE$, 
	\end{itemize}
such that the following properties are satisfied:
	\begin{itemize}
		\item $\face \in \meshF \Rightarrow$ the boundary $\boundary\face$ of $\sigma$ is a finite union of edges in $\meshE$,
		\item $\face, \face' \in \meshF \Rightarrow \face \cap \face' = \boundary\face \cap \boundary\face'$ is a finite union of edges in $\meshE \cup \meshV$, and,
		\item $\edge, \edge' \in \meshE \text{ with } \edge \neq \edge' \Rightarrow \edge\cap \edge' 
		 \in \meshV$.
	\end{itemize}
	The domain of the T-mesh is assumed to be connected and is defined as $\domain := \cup_{\face\in\meshF}\face \subset \RR^2$.
\end{definition}

Sets of horizontal and vertical edges will be denoted by $\meshEH$ and $\meshEV$, respectively.
Edges of the T-mesh are called interior edges if they intersect the interior of the domain of the T-mesh $\domainInterior$. Otherwise, they are called  boundary edges. 
The set of interior edges will be denoted by $\meshInteriorE$; and the sets of interior horizontal and vertical edges will be denote by $\meshInteriorEH$ and $\meshInteriorEV$, respectively.
Similarly, if a vertex is in $\domainInterior$ it will be called an interior vertex, and a boundary vertex otherwise.
The set of interior vertices will be denoted by $\meshInteriorV$.
We will denote the number of $i$-cells with $\ncells{i} := \# \mesh_i$.


\begin{assumption}\label{ass:simplyConnectedDomain}
	The domain $\domain$ is simply connected, and $\domainInterior$ is connected. 
\end{assumption}

A T-mesh which satisfies Assumption \ref{ass:simplyConnectedDomain} will be said to be simply connected. 
We define $\PP_{\degreeu\degreev} $ as the vector space of polynomials of bi-degree at most $(\degreeu,\degreev)$ spanned by the monomials $s^i t^j$, $0 \leq i \leq \degreeu$ and $0 \leq j \leq \degreev$.
If either of $\degreeu$ or $\degreev$ are negative, then $\PP_{\degreeu\degreev} := 0$.
The final ingredient that we need for defining a spline space on $\mesh$ is a smoothness distribution on its edges.

\begin{definition}[Smoothness distribution]
	The map $\bsmooth : \meshE \rightarrow \ZZ_{\geq -1}$ is called a smoothness distribution if $\bsmooth(\edge) = -1$ for all $\edge \notin \meshInteriorE$.
\end{definition}

Using this notation, we can define the spline space $\splSpace^\bsmooth_{\degreeu\degreev}$ that forms the object of our study.
From the following definition and the definition of $\bsmooth$, it will be clear that we are interested in obtaining highly local control over the smoothness of splines in $\splSpace^\bsmooth_{\degreeu\degreev}$, a feature that is missing from the existing literature which studies spline on T-meshes.

\begin{definition}[Spline space]
	Given mesh $\mesh$, bi-degree $(\degreeu,\degreev) \in \ZZP^2$, smoothness distribution $\bsmooth$, we define the spline space $\splSpace^\bsmooth \equiv \splSpace^{\bsmooth}_{\degreeu\degreev}(\mesh)$ as
	\begin{equation}
	\begin{split}
		\splSpace^{\bsmooth}_{\degreeu\degreev}(\mesh) := \bigg\{f\colon &\forall \face \in \meshF~~f|_\face \in \PP_{\degreeu\degreev}\;,\\
		&\forall \edge \in \meshInteriorE~~f \in C^{\bsmooth(\edge)}\text{~smooth across~}\edge\bigg\}\;.
	\end{split}
	\end{equation}
\end{definition}

From the above definition, the pieces of all splines in $\splSpace^\bsmooth$ are constrained to meet with smoothness $\bsmooth(\edge)$ at an interior edge $\edge$; we will also define \begin{equation*}
	\bsmooth_h(\vertex) := \min_{\substack{\edge \ni \vertex\\\edge \in \meshEV}} \bsmooth(\edge)\;,\qquad
	\bsmooth_v(\vertex) := \min_{\substack{\edge \ni \vertex\\\edge \in \meshEH}} \bsmooth(\edge)\;.
\end{equation*}
We will use the following algebraic characterization of smoothness in this document.
\begin{lemma}[Billera \cite{billera1988homology}]\label{lem:smoothness}
	For $\face, \face' \in \meshF$, let $\face \cap \face' = \edge \in \meshInteriorE$. A piecewise polynomial function equalling $p$ and $p'$ on $\face$ and $\face'$, respectively, is at least $\smooth$ times continuously differentiable across $\edge$ if and only if 
	\begin{equation*}
		\ell_\edge^{\smooth+1}~\big|~p - p'\;,
	\end{equation*}
	where $\ell_\edge$ is a non-zero linear polynomial vanishing on $\edge$.
\end{lemma}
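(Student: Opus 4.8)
The plan is to reduce the analytic statement about $C^{\smooth}$ smoothness to a purely algebraic divisibility condition by working in coordinates adapted to the edge. Since every edge of a T-mesh is axis-aligned, after possibly swapping the roles of $s$ and $t$ I may assume $\edge$ lies on the vertical line $s = a$, so that $\ell_\edge$ is a non-zero scalar multiple of $s - a$; the two faces $\face$ and $\face'$ occupy, locally along $\edge$, the half-planes $s < a$ and $s > a$. Because divisibility by $\ell_\edge^{\smooth+1}$ is unaffected by rescaling $\ell_\edge$, it suffices to treat $\ell_\edge = s - a$. (In a general affine setting the same reduction is achieved by an invertible linear change of coordinates carrying $\ell_\edge$ onto a coordinate axis, under which divisibility is preserved.)

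Next I would translate smoothness into derivative-matching. Writing $g := p - p'$, I claim the piecewise function is $C^{\smooth}$ across $\edge$ if and only if every partial derivative $\partial^{i+j} g / \partial s^i \partial t^j$ of order $i + j \leq \smooth$ vanishes at all points of $\edge$. Since $p$ and $p'$ are polynomials, their one-sided derivatives extend to genuine two-sided derivatives, so $C^{\smooth}$ continuity across $\edge$ is exactly the requirement that these derivatives agree from both sides along $\edge$. Moreover, once $g$ vanishes identically along $\edge$, differentiating in the tangential direction $t$ shows that all tangential derivatives of $g$ vanish there automatically; hence the only independent conditions are the vanishing of the normal derivatives $\partial^i g/\partial s^i$ on $\edge$ for $i = 0, 1, \dots, \smooth$.

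The core computation is a Taylor expansion of $g$ about the line $s = a$. Expanding $g(s,t) = \sum_{k} c_k(t)(s-a)^k$ with each $c_k(t)$ a polynomial in $t$, one finds $\partial^i g/\partial s^i \big|_{s = a} = i!\,c_i(t)$. Because $\edge$ is a segment containing infinitely many points, any polynomial in $t$ vanishing on $\edge$ must be the zero polynomial; therefore the smoothness conditions are equivalent to $c_0 = c_1 = \cdots = c_{\smooth} = 0$. This says precisely that $g = (s-a)^{\smooth+1}\sum_{k \geq \smooth+1} c_k(t)(s-a)^{k-\smooth-1}$, i.e. $\ell_\edge^{\smooth+1} \mid g$. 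The converse is immediate: if $\ell_\edge^{\smooth+1} \mid g$, then the first $\smooth$ normal derivatives of $g$ vanish on $\edge$, so the piecewise function is $C^{\smooth}$ across it.

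The only genuinely delicate step is the equivalence asserted in the second paragraph, namely that the analytic notion of $C^{\smooth}$ smoothness of a piecewise function across a lower-dimensional edge is faithfully captured by the vanishing of finitely many partial derivatives of $g$ along $\edge$, and that it suffices to control the normal derivatives alone. What makes this routine rather than hard is that both pieces are polynomials, hence real-analytic: one-sided limits of all derivatives exist, and the passage from vanishing on the segment $\edge$ to vanishing of the corresponding polynomial in $t$ legitimizes the tangential differentiation used to discard the mixed derivatives.
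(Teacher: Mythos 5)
The paper does not actually prove this lemma; it is quoted verbatim as a known result and attributed to Billera \cite{billera1988homology}, so there is no in-paper argument to compare against. Your proof is a correct, self-contained version of the standard argument behind that citation: reduce to $\ell_\edge = s-a$ by the axis-aligned assumption, expand $g = p - p'$ in powers of $s-a$, observe that vanishing of a polynomial coefficient $c_i(t)$ on the infinite point set $\edge$ forces $c_i \equiv 0$, and conclude that matching of the normal derivatives up to order $\smooth$ is exactly divisibility by $(s-a)^{\smooth+1}$. You correctly identify the only step needing care --- that $C^{\smooth}$ smoothness of the glued function across the relative interior of $\edge$ is equivalent to agreement of the one-sided partial derivatives up to order $\smooth$; the reverse implication (derivative matching implies genuine differentiability at edge points) is the standard glueing fact for functions whose derivatives extend continuously across a hyperplane, and your appeal to real-analyticity of the pieces is an acceptable way to dispatch it. No gaps.
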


In line with the above characterization and for each interior edge $\edge$, we define $\ideal^\bsmooth_\edge$ to be the vector subspace of $\PP_{\degreeu\degreev}$ that contains all polynomial multiples of $ \ell_\edge^{\bsmooth(\edge)+1}$; when $\bsmooth(\edge) = -1$, $\ideal^\bsmooth_\edge$ is simply defined to be $\PP_{\degreeu\degreev}$.
Similarly, for each interior vertex $\vertex$, we define $\ideal^\bsmooth_\vertex := \sum_{ \edge\ni \vertex} \ideal^\bsmooth_\edge$.

\begin{remark}
	In the above, we have suppressed the dependence of the different vector spaces on $(\degreeu, \degreev)$ to simplify the reading (and writing) of the text.
\end{remark}

\subsection{Topological chain complexes}\label{ss:homological_interpretation}
Any spline $f \in \splSpace^\bsmooth$ is a piecewise polynomial function on $\mesh$.
We can explicitly refer to its piecewise polynomial nature by equivalently expressing it $\sum_{\face} \faceE{} f_\face$ with $f_\face := f|_\face$.
This notation makes it clear that the polynomial $f_\face$ is attached to the face $\face$ of $\mesh$.
Using this notation and Lemma \ref{lem:smoothness}, the spline space $\splSpace^\bsmooth$ can be equivalently expressed as the kernel of the map $\overline{\boundary}$,
\begin{equation*}
	\overline{\boundary} : \moplus_{\face \in \meshF} \faceE{} \PP_{\degreeu\degreev} \rightarrow \moplus_{\edge \in \meshInteriorE} \edgeE{} \PP_{\degreeu\degreev}/\ideal^\bsmooth_\edge\;.
\end{equation*}
defined by composing the boundary map $\boundary$ with the natural quotient map.

As a result of this observation, the spline space $\splSpace^\bsmooth$ can be interpreted as the top homology of a suitably defined chain complex $\quotientComplex^\bsmooth$,
\begin{equation*}
	\begin{tikzcd}
		\quotientComplex^\bsmooth~:~ & \bigoplus\limits_{\face \in \meshF} \faceE{} \PP_{\degreeu\degreev} \arrow[r] & \bigoplus\limits_{\edge \in \meshInteriorE} \edgeE{} \PP_{\degreeu\degreev}/\ideal^\bsmooth_\edge \arrow[r] & \bigoplus\limits_{\vertex \in \meshInteriorV} \vertexE{} \PP_{\degreeu\degreev}/\ideal^\bsmooth_\vertex \arrow[r] & 0\;.
	\end{tikzcd}
\end{equation*}
In other words, we have
\begin{equation*}
	\splSpace^\bsmooth \isomorphic \kerwp{\overline{\boundary}}  = H_2(\quotientComplex^\bsmooth)\;.
\end{equation*}
As in \cite{billera1988homology,schenck1997family,mourrain2014dimension}, we will study $\quotientComplex$ using the following short exact sequence of chain complexes,
\begin{equation}
	\begin{tikzcd}
		~ & ~ & 0 \arrow[d,""] & 0 \arrow[d,""] & ~ \\
		\idealComplex^\bsmooth~:~ & 0\arrow[r] \arrow[d]& \bigoplus\limits_{\edge \in \meshInteriorE} \edgeE{} \ideal^\bsmooth_\edge \arrow[r] \arrow[d] & \bigoplus\limits_{\vertex \in \meshInteriorV} \vertexE{} \ideal^\bsmooth_\vertex \arrow[r] \arrow[d] & 0 \\
		\constantComplex~:~ & \bigoplus\limits_{\face \in \meshF} \faceE{} \PP_{\degreeu\degreev} \arrow[r] \arrow[d] & \bigoplus\limits_{\edge \in \meshInteriorE} \edgeE{} \PP_{\degreeu\degreev} \arrow[r] \arrow[d] & \bigoplus\limits_{\vertex \in \meshInteriorV} \vertexE{} \PP_{\degreeu\degreev} \arrow[r] \arrow[d] & 0 \\
		\quotientComplex^\bsmooth~:~ & \bigoplus\limits_{\face \in \meshF} \faceE{} \PP_{\degreeu\degreev} \arrow[r] & \bigoplus\limits_{\edge \in \meshInteriorE} \edgeE{} \PP_{\degreeu\degreev}/\ideal^\bsmooth_\edge \arrow[r] \arrow[d] & \bigoplus\limits_{\vertex \in \meshInteriorV} \vertexE{} \PP_{\degreeu\degreev}/\ideal^\bsmooth_\vertex \arrow[r] \arrow[d] & 0 \\
		~ & ~ & 0 & 0 & ~
	\end{tikzcd}
\label{eq:complex}
\end{equation}

\begin{theorem} For a simply connected T-mesh $\meshF$, the dimension of the spline space of bi-degree $(m,m')$ and smoothness distribution $\bsmooth$ is given by
	\begin{equation*}
	\begin{split}
		\dimwp{\splSpace^\bsmooth} &= \euler{\quotientComplex^\bsmooth} + \dimwp{H_0(\idealComplex^\bsmooth)}\;,
	\end{split}
	\end{equation*}
where $H_0(\idealComplex^\bsmooth)$ is the zeroth homology of the complex $\idealComplex^\bsmooth$ and $\euler{\quotientComplex^\bsmooth}$ is the Euler characteristic of the complex $\quotientComplex^\bsmooth$,
\begin{equation*}
	\begin{split}
		\euler{\quotientComplex^\bsmooth} &= \ncells{2}(\degreeu+1)(\degreev+1)\\
		&~~- (\degreeu+1)\sum_{\edge \in \meshEH}(\min(\bsmooth(\edge),\degreev)+1) - (\degreev+1)\sum_{\edge \in \meshEV}(\min(\bsmooth(\edge),\degreeu)+1)\\
		&~~+\sum_{\vertex \in \meshV}(\min(\bsmooth_h(\vertex),\degreeu)+1)(\min(\bsmooth_v(\vertex),\degreev)+1)\;.
	\end{split}	
\end{equation*}
\end{theorem}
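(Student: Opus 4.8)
The plan is to read off the dimension of $\splSpace^\bsmooth \isomorphic H_2(\quotientComplex^\bsmooth)$ from the short exact sequence of complexes in \eqref{eq:complex}, combining two standard facts. First, the Euler characteristic of a bounded complex of finite-dimensional vector spaces equals both the alternating sum of the dimensions of its terms and the alternating sum of the dimensions of its homologies; applied to $\quotientComplex^\bsmooth$ this reads $\euler{\quotientComplex^\bsmooth} = \dimwp{H_2(\quotientComplex^\bsmooth)} - \dimwp{H_1(\quotientComplex^\bsmooth)} + \dimwp{H_0(\quotientComplex^\bsmooth)}$. Second, the short exact sequence induces a long exact sequence in homology. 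The whole formula will follow once I establish $H_0(\quotientComplex^\bsmooth) = 0$ and $H_1(\quotientComplex^\bsmooth) \isomorphic H_0(\idealComplex^\bsmooth)$, since then $\dimwp{\splSpace^\bsmooth} = \dimwp{H_2(\quotientComplex^\bsmooth)} = \euler{\quotientComplex^\bsmooth} + \dimwp{H_0(\idealComplex^\bsmooth)}$.

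To obtain those two identities I would first compute the low-degree homology of the middle complex $\constantComplex$. Each of its terms is a direct sum of copies of $\PP_{\degreeu\degreev}$ indexed by the faces, interior edges, and interior vertices, and its maps are the cellular boundary operators tensored with the identity on $\PP_{\degreeu\degreev}$; hence $\constantComplex$ computes the relative cellular homology $H_\bullet(\mesh, \boundary\mesh) \otimes \PP_{\degreeu\degreev}$. Under Assumption \ref{ass:simplyConnectedDomain} the domain is a topological disk, so the long exact sequence of the pair $(\mesh,\boundary\mesh)$ gives $H_0(\mesh,\boundary\mesh) = H_1(\mesh,\boundary\mesh) = 0$ and therefore $H_0(\constantComplex) = H_1(\constantComplex) = 0$. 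Feeding these vanishings into the long exact sequence induced by \eqref{eq:complex}, the relevant segment $H_1(\constantComplex) \to H_1(\quotientComplex^\bsmooth) \to H_0(\idealComplex^\bsmooth) \to H_0(\constantComplex) \to H_0(\quotientComplex^\bsmooth) \to 0$ collapses to exactly the isomorphism $H_1(\quotientComplex^\bsmooth) \isomorphic H_0(\idealComplex^\bsmooth)$ and the vanishing $H_0(\quotientComplex^\bsmooth) = 0$.

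It then remains to evaluate $\euler{\quotientComplex^\bsmooth}$ as the alternating sum of the dimensions of the three terms of $\quotientComplex^\bsmooth$. The face term contributes $\ncells{2}(\degreeu+1)(\degreev+1)$. Since $\ell_\edge$ involves a single variable, multiplication by $\ell_\edge^{\bsmooth(\edge)+1}$ acts only in that variable, so a horizontal edge gives $\dimwp{\PP_{\degreeu\degreev}/\ideal^\bsmooth_\edge} = (\degreeu+1)(\min(\bsmooth(\edge),\degreev)+1)$ and a vertical edge gives $(\degreev+1)(\min(\bsmooth(\edge),\degreeu)+1)$. For a vertex, after translating $\vertex$ to the origin, $\ideal^\bsmooth_\vertex$ is spanned by the multiples of $s^{\bsmooth_h(\vertex)+1}$ and $t^{\bsmooth_v(\vertex)+1}$ lying in $\PP_{\degreeu\degreev}$ (the forms along each grid line generate the power with the smallest exponent), so the complementary monomials $s^i t^j$ with $i \leq \min(\bsmooth_h(\vertex),\degreeu)$ and $j \leq \min(\bsmooth_v(\vertex),\degreev)$ form a basis of the quotient, giving $(\min(\bsmooth_h(\vertex),\degreeu)+1)(\min(\bsmooth_v(\vertex),\degreev)+1)$. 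Finally, a boundary edge has $\bsmooth(\edge) = -1$, hence $\ideal^\bsmooth_\edge = \PP_{\degreeu\degreev}$ and a zero quotient, while a boundary vertex lies on a boundary edge in at least one axis direction, forcing one factor of its product to vanish; thus the sums extend harmlessly from interior cells to all of $\meshEH$, $\meshEV$, and $\meshV$, yielding the stated expression.

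The main obstacle is the topological input isolating $H_0(\constantComplex) = H_1(\constantComplex) = 0$: this is precisely the place where the embedding-independent, purely combinatorial-topological structure of the mesh enters, and it is where Assumption \ref{ass:simplyConnectedDomain} is genuinely used rather than mere bookkeeping. Everything downstream — the single segment of the long exact sequence and the three dimension counts — is routine linear algebra once that vanishing is secured, with the only mild subtlety being the identification of $\ideal^\bsmooth_\vertex$ with the monomial subspace generated by $s^{\bsmooth_h(\vertex)+1}$ and $t^{\bsmooth_v(\vertex)+1}$.
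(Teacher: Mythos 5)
Your proposal is correct and follows essentially the same route as the paper: vanishing of $H_0(\constantComplex)$ and $H_1(\constantComplex)$ from the simple-connectedness assumption, the long exact sequence of \eqref{eq:complex} giving $H_0(\quotientComplex^\bsmooth)=0$ and $H_1(\quotientComplex^\bsmooth)\isomorphic H_0(\idealComplex^\bsmooth)$, and the two expressions for the Euler characteristic. You additionally spell out details the paper leaves implicit (the identification of $\constantComplex$ with relative cellular homology of the pair $(\mesh,\boundary\mesh)$ and the term-by-term dimension counts for the edge and vertex quotients), all of which check out.
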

\begin{proof}
	Following Assumption \ref{ass:simplyConnectedDomain}, it is clear that $H_0(\constantComplex) = 0 = H_1(\constantComplex)$.
	Moreover, from the long exact sequence of homology implied by the short exact sequence of complexes in Equation \eqref{eq:complex}, we obtain
	\begin{equation*}
		H_0(\quotientComplex^\bsmooth) = 0\;, \qquad
		H_0(\idealComplex^\bsmooth) \isomorphic H_1(\quotientComplex^\bsmooth)\;.
	\end{equation*}
	Therefore, the claim follows upon recalling $\splSpace^\bsmooth \isomorphic H_2(\quotientComplex^\bsmooth)$ and the definition of the Euler characteristic of $\quotientComplex^\bsmooth$,
	\begin{equation*}
	\begin{split}
		\euler{\quotientComplex^\bsmooth} &= \dimwp{\quotientComplex^\bsmooth_2} - \dimwp{\quotientComplex^\bsmooth_1} + \dimwp{\quotientComplex^\bsmooth_0}\;,\\
		&= \dimwp{H_2(\quotientComplex^\bsmooth)} - \dimwp{H_1(\quotientComplex^\bsmooth)} + \dimwp{H_0(\quotientComplex^\bsmooth)}\;.
	\end{split}
	\end{equation*}
\end{proof}

\begin{corollary}\label{cor:stable_dimension}
	If $\dimwp{H_0(\idealComplex^\bsmooth)} = 0$, then the dimension is stable and can be computed using the following (combinatorial) formula,
	\begin{equation*}
	\dimwp{\splSpace^\bsmooth} = \euler{\quotientComplex^\bsmooth}\;.
	\end{equation*}
\end{corollary}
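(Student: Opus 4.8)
The plan is to read the result directly off the preceding Theorem, whose statement already decomposes
\begin{equation*}
	\dimwp{\splSpace^\bsmooth} = \euler{\quotientComplex^\bsmooth} + \dimwp{H_0(\idealComplex^\bsmooth)}\;.
\end{equation*}
Under the hypothesis $\dimwp{H_0(\idealComplex^\bsmooth)} = 0$ the second summand vanishes, and the equality $\dimwp{\splSpace^\bsmooth} = \euler{\quotientComplex^\bsmooth}$ follows immediately. So the substantive point is not the formula itself but the assertion that the resulting value is \emph{stable}, i.e.\ independent of the geometric embedding of $\mesh$, and that it is \emph{combinatorial}.

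To justify both adjectives I would inspect the explicit expression for $\euler{\quotientComplex^\bsmooth}$ supplied in the Theorem. Every term there is a combinatorial quantity: the face count $\ncells{2}$, the edge sums $\sum_{\edge \in \meshEH}(\min(\bsmooth(\edge),\degreev)+1)$ and $\sum_{\edge \in \meshEV}(\min(\bsmooth(\edge),\degreeu)+1)$, and the vertex sum $\sum_{\vertex \in \meshV}(\min(\bsmooth_h(\vertex),\degreeu)+1)(\min(\bsmooth_v(\vertex),\degreev)+1)$. None of these involve the coordinates of the vertices, nor the lengths or positions of the edges; they depend only on the incidence structure of $\mesh$, the bi-degree $(\degreeu,\degreev)$, and the smoothness distribution $\bsmooth$. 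Hence $\euler{\quotientComplex^\bsmooth}$ is invariant under any re-embedding of $\mesh$ that preserves this combinatorial data, and the equality $\dimwp{\splSpace^\bsmooth} = \euler{\quotientComplex^\bsmooth}$ transfers that invariance to the dimension itself, which also establishes that the right-hand side can be evaluated purely combinatorially.

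I do not expect a genuine obstacle in this corollary: its content is simply that stability of the dimension is guaranteed as soon as the single homology group $H_0(\idealComplex^\bsmooth)$ vanishes. The real difficulty lies in \emph{verifying} that hypothesis for concrete meshes, i.e.\ in controlling $\dimwp{H_0(\idealComplex^\bsmooth)}$ when the smoothness is reduced across a subset of the edges; but that analysis belongs to the later sections and is not needed here. I would therefore confine the proof to the two observations above — vanishing of the homological correction term, and geometry-independence of the Euler characteristic — and leave it at that.
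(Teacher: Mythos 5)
Your proposal is correct and matches the paper's (implicit) argument: the corollary is an immediate consequence of the preceding theorem's formula $\dimwp{\splSpace^\bsmooth} = \euler{\quotientComplex^\bsmooth} + \dimwp{H_0(\idealComplex^\bsmooth)}$ once the homological correction term vanishes, and your observation that the Euler characteristic depends only on the combinatorial data of $\mesh$, the bi-degree, and $\bsmooth$ is exactly what the words ``stable'' and ``combinatorial'' are meant to capture.
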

	
	\section{Spline space $\splSpace^{\bsmoothr} \supseteq \splSpace^\bsmooth$ of reduced regularity}\label{sec:smoothness_reduction}

In this intermediate section, we will relate the dimension of the spline space $\splSpace^\bsmooth$ to the dimension of a spline space $\splSpace^{\bsmoothr}$ obtained by relaxing the regularity requirements.
That is, for all interior edges $\edge$, it will be assumed that $\bsmoothr(\edge) \leq \bsmooth(\edge)$.
This relationship will be utilized in the next section to present sufficient conditions for the dimension of $\splSpace^{\bsmoothr}$ to be stable.

For the spline space $\splSpace^{\bsmoothr}$, let the first and last chain complexes in Equation \eqref{eq:complex} be denoted by $\idealComplex^{\bsmoothr}$ and $\quotientComplex^{\bsmoothr}$, respectively.
The spline space dimension is therefore given as below,
\begin{equation}
	\dimwp{\splSpace^{\bsmoothr}} = \dimwp{H_2(\quotientComplex^{\bsmoothr})} = \euler{\quotientComplex^{\bsmoothr}} + \dimwp{H_0(\idealComplex^{\bsmoothr})}\;.
\end{equation}
Then, by definition of the smoothness distributions $\bsmooth$ and $\bsmoothr$, we have the following inclusion map from $\idealComplex^{\bsmooth}$ to $\idealComplex^{\bsmoothr}$,
\begin{equation*}
	\idealComplex^\bsmooth \xrightarrow{\iota} \idealComplex^{\bsmoothr}\;.
\end{equation*}

\begin{proposition}\label{prop:alternate_ideal_homology}
	If $H_0(\idealComplex^{\bsmooth}) = 0$, then $H_0(\idealComplex^{\bsmoothr}) \isomorphic H_0(\idealComplex^{\bsmoothr} / \idealComplex^{\bsmooth})$.
\end{proposition}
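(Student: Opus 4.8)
The plan is to recognize $\iota$ as the inclusion of a subcomplex and then read off the statement from the long exact sequence in homology. First I would check that $\iota$ is well defined and injective at the chain level. Since $\bsmoothr(\edge) \leq \bsmooth(\edge)$ on every interior edge, the exponents satisfy $\bsmoothr(\edge)+1 \leq \bsmooth(\edge)+1$, so $\ell_\edge^{\bsmooth(\edge)+1}$ is a polynomial multiple of $\ell_\edge^{\bsmoothr(\edge)+1}$; hence $\ideal^\bsmooth_\edge \subseteq \ideal^\bsmoothr_\edge$. Summing over the edges incident to an interior vertex and using $\ideal^\bsmooth_\vertex = \sum_{\edge \ni \vertex}\ideal^\bsmooth_\edge$ gives $\ideal^\bsmooth_\vertex \subseteq \ideal^\bsmoothr_\vertex$ as well. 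As both $\idealComplex^\bsmooth$ and $\idealComplex^\bsmoothr$ are subcomplexes of $\constantComplex$ carrying the restriction of the same boundary map $\boundary$, these termwise inclusions automatically commute with the differentials, so $\iota$ is an injective chain map and we obtain a short exact sequence of complexes
\[
0 \longrightarrow \idealComplex^\bsmooth \stackrel{\iota}{\longrightarrow} \idealComplex^\bsmoothr \longrightarrow \idealComplex^\bsmoothr/\idealComplex^\bsmooth \longrightarrow 0\;.
\]

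Next I would invoke the long exact sequence in homology associated to this short exact sequence, exactly as is done for the sequence \eqref{eq:complex}. Because each of the three complexes is concentrated in homological degrees $0$ (vertices) and $1$ (edges), with face-term equal to $0$, there are no chains in negative degree, and the relevant tail reads
\[
\cdots \longrightarrow H_0(\idealComplex^\bsmooth) \stackrel{H_0(\iota)}{\longrightarrow} H_0(\idealComplex^\bsmoothr) \longrightarrow H_0(\idealComplex^\bsmoothr/\idealComplex^\bsmooth) \longrightarrow 0\;,
\]
the final zero being $H_{-1}(\idealComplex^\bsmooth)$. Imposing the hypothesis $H_0(\idealComplex^\bsmooth) = 0$ forces the map $H_0(\iota)$ to be zero, so by exactness the map $H_0(\idealComplex^\bsmoothr) \to H_0(\idealComplex^\bsmoothr/\idealComplex^\bsmooth)$ is both injective (its kernel equals the image of $H_0(\iota)$, which is $0$) and surjective (its cokernel injects into $H_{-1}(\idealComplex^\bsmooth)=0$). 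This yields the desired isomorphism $H_0(\idealComplex^\bsmoothr) \isomorphic H_0(\idealComplex^\bsmoothr/\idealComplex^\bsmooth)$.

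There is no substantive obstacle in this argument; it is a direct application of the long-exact-sequence machinery already in use. The only places demanding care are bookkeeping ones: confirming that the termwise containments of the ideals genuinely upgrade to an inclusion of complexes (which reduces to the single observation that both complexes inherit the one differential from $\constantComplex$), and correctly locating the ends of the long exact sequence so that the truncation at degree $0$ produces a two-sided, rather than one-sided, statement. I expect the cleanest write-up to simply record the short exact sequence of complexes above, cite the induced long exact sequence, and dispatch these verifications in a line each.
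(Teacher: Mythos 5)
Your argument is correct and is exactly the paper's proof: the paper likewise forms the short exact sequence $0 \to \idealComplex^{\bsmooth} \to \idealComplex^{\bsmoothr} \to \idealComplex^{\bsmoothr}/\idealComplex^{\bsmooth} \to 0$ and reads the claim off the induced long exact sequence in homology. Your write-up merely makes explicit the routine verifications (that the termwise ideal containments give an inclusion of subcomplexes of $\constantComplex$, and the truncation of the long exact sequence at degree $0$) that the paper leaves implicit.
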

\begin{proof}
	The claim follows from the following short exact sequence of chain complexes (and the long exact sequence of homology implied by it),
	\begin{equation*}
		\begin{tikzcd}
			0 \arrow[r] & \idealComplex^{\bsmooth} \arrow[r] & \idealComplex^{\bsmoothr} \arrow[r] & \idealComplex^{\bsmoothr} / \idealComplex^{\bsmooth} \arrow[r] & 0\;.
		\end{tikzcd}
	\end{equation*}
\end{proof}

The previous result considerably simplifies the task of identifying when $H_0(\idealComplex^{\bsmoothr})$ will vanish because $H_0(\idealComplex^{\bsmoothr} / \idealComplex^{\bsmooth})$ can be a simpler object to study.
Let ${\mesh}_1^\bsmoothr$ be the set of edges $\edge$ for which $\bsmoothr(\edge) < \bsmooth(\edge)$, and let ${\mesh}_0^\bsmoothr$ be the set of interior vertices of the edges $\tau\in {\mesh}_1^\bsmoothr$.
The following result follows.
\begin{lemma}\label{lem:support}
	The complex $\idealComplex^{\bsmoothr} / \idealComplex^{\bsmooth}$ is supported only on  ${\mesh}_1^\bsmoothr$ and  ${\mesh}_0^\bsmoothr$ .
\end{lemma}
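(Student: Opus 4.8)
The plan is to compare, term by term, the ideals defining $\idealComplex^{\bsmooth}$ and $\idealComplex^{\bsmoothr}$, and to show that the corresponding quotients vanish away from $\mesh_1^{\bsmoothr}$ and $\mesh_0^{\bsmoothr}$. First I would record the basic inclusion underlying the quotient: since $\bsmoothr(\edge) \leq \bsmooth(\edge)$ for every interior edge, the power $\ell_\edge^{\bsmooth(\edge)+1}$ is a polynomial multiple of $\ell_\edge^{\bsmoothr(\edge)+1}$, so every multiple of the former is a multiple of the latter, giving $\ideal^{\bsmooth}_\edge \subseteq \ideal^{\bsmoothr}_\edge$. Summing over incident edges yields $\ideal^{\bsmooth}_\vertex \subseteq \ideal^{\bsmoothr}_\vertex$ at every interior vertex. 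Consequently $\iota$ is injective in each degree, and the quotient complex $\idealComplex^{\bsmoothr}/\idealComplex^{\bsmooth}$ is precisely the complex with edge term $\ideal^{\bsmoothr}_\edge/\ideal^{\bsmooth}_\edge$ and vertex term $\ideal^{\bsmoothr}_\vertex/\ideal^{\bsmooth}_\vertex$.

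Next I would dispose of the edges. For an interior edge $\edge$ with $\bsmoothr(\edge) = \bsmooth(\edge)$ the two ideals coincide, so $\ideal^{\bsmoothr}_\edge/\ideal^{\bsmooth}_\edge = 0$. Thus every edge outside $\mesh_1^{\bsmoothr}$ contributes a zero summand, and the edge-degree term of $\idealComplex^{\bsmoothr}/\idealComplex^{\bsmooth}$ reduces to $\moplus_{\edge \in \mesh_1^{\bsmoothr}} \edgeE{}\,(\ideal^{\bsmoothr}_\edge/\ideal^{\bsmooth}_\edge)$, as claimed.

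The vertex case is the only step requiring a genuine argument, because $\ideal^{\bsmooth}_\vertex$ is a \emph{sum} over incident edges rather than a single edge ideal. Fix an interior vertex $\vertex \notin \mesh_0^{\bsmoothr}$. Since $\vertex$ is itself interior, if any edge of $\mesh_1^{\bsmoothr}$ were incident to $\vertex$ then $\vertex$ would be one of its interior vertices and hence lie in $\mesh_0^{\bsmoothr}$; so no incident edge belongs to $\mesh_1^{\bsmoothr}$. Equivalently, every edge $\edge \ni \vertex$ satisfies $\bsmoothr(\edge) = \bsmooth(\edge)$, whence $\ideal^{\bsmoothr}_\edge = \ideal^{\bsmooth}_\edge$. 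Summing over incident edges gives $\ideal^{\bsmoothr}_\vertex = \sum_{\edge \ni \vertex}\ideal^{\bsmoothr}_\edge = \sum_{\edge \ni \vertex}\ideal^{\bsmooth}_\edge = \ideal^{\bsmooth}_\vertex$, so the vertex term $\ideal^{\bsmoothr}_\vertex/\ideal^{\bsmooth}_\vertex$ vanishes. Hence only vertices in $\mesh_0^{\bsmoothr}$ contribute, and $\idealComplex^{\bsmoothr}/\idealComplex^{\bsmooth}$ is supported on $\mesh_1^{\bsmoothr}$ and $\mesh_0^{\bsmoothr}$.

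I expect no serious obstacle here: the statement is essentially a bookkeeping consequence of the definitions once the inclusion $\ideal^{\bsmooth} \subseteq \ideal^{\bsmoothr}$ is in place. The one point that deserves care is the vertex step, namely confirming that matching smoothness on \emph{every} edge incident to $\vertex$ forces equality of the summed vertex ideals, and not merely an inclusion; this is exactly why I first translate the condition $\vertex \notin \mesh_0^{\bsmoothr}$ into the statement that no incident edge lies in $\mesh_1^{\bsmoothr}$.
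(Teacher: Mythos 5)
Your proposal is correct and follows essentially the same route as the paper's proof, which simply observes that $\bsmoothr(\edge) = \bsmooth(\edge)$ forces $\ideal^{\bsmoothr}_\edge = \ideal^{\bsmooth}_\edge$ (and similarly for vertices), so the cokernel of $\iota$ vanishes off $\mesh_1^{\bsmoothr}$ and $\mesh_0^{\bsmoothr}$. Your more detailed treatment of the vertex case --- translating $\vertex \notin \mesh_0^{\bsmoothr}$ into the statement that no incident edge lies in $\mesh_1^{\bsmoothr}$, so the summed vertex ideals coincide --- is exactly the step the paper compresses into ``similarly for the vertices.''
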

\begin{proof}
	The claim follows from the definition of the complexes $\idealComplex^{\bsmoothr}$ and $\idealComplex^{\bsmooth}$.
	Indeed, if $\bsmoothr(\edge) = \bsmooth(\edge)$, then $\ideal^{\bsmoothr}_\edge = \ideal^{\bsmooth}_\edge$ and the cokernel of $\iota$ is zero on $\edge$; similarly for the vertices.
\end{proof}
	
	\section{Dimension of splines of mixed smoothness}\label{sec:dimension}

This section contains our main results.
Starting from a spline space with stable dimension, we specify sufficient conditions when the dimension can still be computed using Corollary \ref{cor:stable_dimension} after the smoothness requirements are relaxed for a subset of the interior edges.
We first define the weight of a connected union of horizontal or vertical edges.
\begin{definition}[Segment and its weight]
	Let $A \subseteq \meshInteriorE\cup\meshV$ be a finite set of horizontal (resp., vertical) edges $\tau\in\meshInteriorE$ together with their vertices $\gamma\in\tau$, such that $\cup_{\edge \in A}\edge$ is connected and it contains at least one edge.
	Then $A$ will be called a horizontal (resp., vertical) segment.
	Its weight $\omega^\bsmooth(A)$ will be defined as
	\begin{equation*}
		\omega^\bsmooth(A) := 
		\begin{dcases}
			\sum_{\gamma\in A} \big(\degreeu - \bsmooth_h(\vertex)\big)_+\; & \text{if } A \text{ is horizontal}\;,\\
			\sum_{\gamma\in A}  \big(\degreev - \bsmooth_v(\vertex)\big)_+\; & \text{if } A \text{ is vertical}\;.
		\end{dcases}
	\end{equation*}
\end{definition}

\begin{theorem}\label{thm:dimension}
	Let $\bsmooth$ be such that $H_0(\idealComplex^\bsmooth) = 0$ and let $A$ be a segment of the mesh.
	 Consider the space $\splSpace^\bsmoothr$ where the smoothness distribution $\bsmoothr$ is defined as follows for some $r \in \ZZ_{\geq -1}$,
	\[\bsmoothr(\edge)=\begin{cases}
		\bsmooth(\edge)\; & \text{for $\edge \notin A \cap \meshInteriorE$},\\
	\smooth  \leq \bsmooth(\edge)\; & \text{for $\edge \in A \cap \meshInteriorE$},\\
	\end{cases}\]
	If  either one of the following two requirements is satisfied,
	\begin{enumerate}[label=(\alph*)]
		\item $A \subsetneq B$ for some segment $B$,  and $\bsmoothr(\edge) \leq \smooth$ for all $\edge \in B$;
		\item $A$ is horizontal and $\omega^{\bsmoothr}(A) = \geq \degreeu+1$; otherwise, $\omega^\bsmooth(A) \geq \degreev+1$;
	\end{enumerate}
	then $H_0(\idealComplex^{\bsmoothr}) = 0$ and $\dimwp{\splSpace^{\bsmoothr}} = \euler{\quotientComplex^{\bsmoothr}}$.
\end{theorem}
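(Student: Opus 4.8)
The plan is to verify the vanishing $H_0(\idealComplex^{\bsmoothr}) = 0$ through the reductions already established and then to collapse everything to a one-dimensional interpolation count along $A$. Because $H_0(\idealComplex^{\bsmooth}) = 0$ by hypothesis, Proposition~\ref{prop:alternate_ideal_homology} gives $H_0(\idealComplex^{\bsmoothr}) \isomorphic H_0(\idealComplex^{\bsmoothr}/\idealComplex^{\bsmooth})$, and Lemma~\ref{lem:support} confines the quotient complex to the reduced edges ${\mesh}_1^{\bsmoothr} \subseteq A$ and their interior vertices ${\mesh}_0^{\bsmoothr}$. Hence it is enough to prove that the single nonzero differential
\[
\phi \colon \bigoplus_{\edge \in {\mesh}_1^{\bsmoothr}} \edgeE{}\left(\ideal^{\bsmoothr}_\edge/\ideal^{\bsmooth}_\edge\right) \longrightarrow \bigoplus_{\vertex \in {\mesh}_0^{\bsmoothr}} \vertexE{}\left(\ideal^{\bsmoothr}_\vertex/\ideal^{\bsmooth}_\vertex\right)
\]
is surjective. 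I treat $A$ horizontal; the vertical case is identical after exchanging $(s,\degreeu)$ with $(t,\degreev)$.

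All edges of $A$ are collinear, so I may take $\ell_\edge = t$ for each of them. Every module in the source and target of $\phi$ is then graded by the power of $t$, and $\phi$ decomposes as a direct sum of maps $\phi_d$, one per $t$-degree $d$. Computing the graded pieces shows that an edge $\edge$ contributes the space of polynomials of degree at most $\degreeu$ in the single variable $s$ exactly when $r+1 \le d \le \min(\bsmooth(\edge),\degreev)$, while a vertex $\vertex$ contributes the Hermite quotient of that space modulo $\langle (s-s_\vertex)^{\bsmooth_h(\vertex)+1}\rangle$ exactly when $r+1 \le d \le \min(\bsmooth_v(\vertex),\degreev)$; here $\bsmooth_h(\vertex)$ is read off the (unchanged) vertical edges at $\vertex$. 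Under $\phi_d$ an edge polynomial maps to the difference of its order-$\bsmooth_h(\vertex)$ jets at its two endpoints, so each $\phi_d$ is a purely univariate problem in $s$.

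I would then analyze $\phi_d$ run by run. At a fixed $d$ the active edges form maximal runs of consecutive edges; the kernel of $\phi_d$ on such a run is exactly a univariate spline space on that run — pieces of degree $\le \degreeu$, matched to order $\bsmooth_h(\vertex)$ at each interior node, together with a homogeneous vanishing condition of the same order at every extreme vertex of the run that is an active target. Since univariate Hermite interpolation at distinct nodes is unisolvent up to the per-piece degree budget, these conditions are independent until they exhaust the available degrees of freedom, and the cokernel of $\phi_d$ on a run is $0$ unless both extreme vertices of the run are active. Maximality forces that situation to occur only when the run is all of $A$ and both endpoints of $A$ are active; a short count then pins the cokernel dimension to $\big(\degreeu+1-\omega^{\bsmoothr}(A)\big)_+$. (Because $\bsmooth_h$ depends only on the vertical edges, $\omega^{\bsmoothr}(A)=\omega^{\bsmooth}(A)$, which reconciles the two superscripts in statement~(b).)

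Finally I would show that each hypothesis removes this lone obstruction. Hypothesis~(b) is the assertion $\omega^{\bsmoothr}(A)\ge \degreeu+1$, which makes the dimension above vanish outright. Under hypothesis~(a), the continuing edges of $B\setminus A$ satisfy $\bsmooth(\edge)=\bsmoothr(\edge)\le r$, so at every endpoint of $A$ lying interior to $B$ the value $\bsmooth_v(\vertex)$ is untouched by the reduction; there $\ideal^{\bsmoothr}_\vertex=\ideal^{\bsmooth}_\vertex$, the vertex drops out of the target, and since $A\subsetneq B$ at least one extreme vertex of the all-of-$A$ run is inactive, so the binding case never arises. In either case $\phi$ is surjective, $H_0(\idealComplex^{\bsmoothr})=0$, and $\dimwp{\splSpace^{\bsmoothr}}=\euler{\quotientComplex^{\bsmoothr}}$ by Corollary~\ref{cor:stable_dimension}. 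The main obstacle I anticipate is the univariate bookkeeping in the third step: proving that the Hermite/spline conditions along a run stay independent up to the dimension bound and tracking exactly which extreme vertices are active, so that the cokernel is pinned to precisely $(\degreeu+1-\omega^{\bsmoothr}(A))_+$ rather than merely bounded.
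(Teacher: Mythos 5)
Your argument is correct and shares the paper's skeleton: both proofs use Proposition~\ref{prop:alternate_ideal_homology} and Lemma~\ref{lem:support} to reduce everything to the surjectivity of the boundary map of $\idealComplex^{\bsmoothr}/\idealComplex^{\bsmooth}$ supported on the single segment $A$, and both ultimately rest on the same univariate fact, namely that $\sum_{i}\bigl\langle (s-s_i)^{\bsmooth_h(\vertex_i)+1}\bigr\rangle$ exhausts the polynomials of degree at most $\degreeu$ in $s$ exactly when $\omega^{\bsmoothr}(A)\geq\degreeu+1$. Where you diverge is in how that fact is deployed. The paper stays with the bivariate ideals: it invokes the partition-of-unity identity $1=\sum_i\ell_i^{\smooth_i+1}f_i$ (Proposition 1.8 of Mourrain's paper on T-meshes) to split any new element $\ell_A^{\smooth+1}f$ of a vertex ideal into summands each already lying in some $\ideal^{\bsmooth}_{\vertex_j}$, and transports these along the edges of $A$; case (a) is declared ``much simpler'' and not written out. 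You instead grade by the power of $t=\ell_A$, turn each graded piece $\phi_d$ into a univariate Hermite-interpolation problem, and compute the cokernel run by run, pinning it to $\bigl(\degreeu+1-\omega^{\bsmoothr}(A)\bigr)_+$ in the unique binding configuration (a single run spanning all of $A$ with both extreme vertices active). This buys you two things the paper's write-up does not make explicit: an exact identification of the obstruction space rather than a bare vanishing statement, and a genuine proof of case (a) --- the observation that an endpoint of $A$ interior to $B$ satisfies $\ideal^{\bsmoothr}_{\vertex}=\ideal^{\bsmooth}_{\vertex}$ because the unchanged collinear edge already has smoothness at most $\smooth$, so that vertex leaves the support of the quotient complex and the binding configuration cannot occur. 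The ``remaining bookkeeping'' you flag is precisely the unisolvence of Hermite interpolation at distinct nodes, which is equivalent to the cited identity, so there is no gap.
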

\begin{proof}
	By Lemma \ref{lem:support}, the study of $H_0(\idealComplex^{\bsmoothr})$ reduces to the study of $H_0(\idealComplex^{\bsmoothr} / \idealComplex^{\bsmooth})$ on the segment $A$ --- an essentially one dimensional problem.
	Let us prove the claim for the setting when $A$ satisfies condition (b) above; the proof is much simpler when condition (a) is satisfied.
	Consider then the horizontal segment $A$ as shown below; the proof for vertical segments is analogous. 	$A$ contains the edges $\edge_1, \dots, \edge_k\in\meshInteriorE $ and the vertices $\vertex_0, \vertex_1, \dots, \vertex_k$. By definition $A$ contains at least two different vertices.
	
	\begin{center}
	\begin{tikzpicture}[scale=2]
		\node[circle,fill=black,black,inner sep=0pt,minimum size=3pt] (c0) at (0,0) {};
		\node[circle,fill=black,black,inner sep=0pt,minimum size=3pt] (c1) at (1,0) {};
		\node[circle,fill=black,black,inner sep=0pt,minimum size=3pt] (c2) at (2,0) {};
		\node[circle,fill=black,black,inner sep=0pt,minimum size=3pt] (c3) at (4,0) {};
		\node[circle,fill=black,black,inner sep=0pt,minimum size=3pt] (c4) at (5,0) {};
		\draw[eThickness] (c0) -- (c1) -- (c2);
		\draw[eThickness, densely dashed] (c2) -- (c3);
		\draw[eThickness] (c3) -- (c4);
		\node[below] at ($(c0)!0.5!(c1)$) {$\edge_1$};
		\node[below] at ($(c1)!0.5!(c2)$) {$\edge_2$};
		\node[below] at ($(c3)!0.5!(c4)$) {$\edge_k$};		
		\node[above] at (c0) {$\vertex_0$};
		\node[above] at (c1) {$\vertex_1$};
		\node[above] at (c2) {$\vertex_2$};
		\node[above] at (c3) {$\vertex_{k-1}$};
		\node[above] at (c4) {$\vertex_k$};
	\end{tikzpicture}
	\end{center}

	Let $\ell_A$ be a non-zero linear polynomial that vanishes on $A$, and let $\ell_0, \dots, \ell_k$ be non-zero linear polynomials that vanish on vertical edges 
	that contain the vertices $\vertex_0, \dots, \vertex_k$, respectively.
	Let $\smooth_0, \dots, \smooth_k$ be such that $\smooth_i = \bsmooth_h(\vertex_i) = \bsmoothr_h(\vertex_i)$.
	
	By definition, $\ideal_\edge^{\bsmoothr} = \bigl\{ \ell_A^{\smooth+1}f \colon f\in \PP_{\degreeu(\degreev-\smooth-1)}\bigr\}$. 
	Since $\omega^{\bsmoothr}(A) \geq \degreeu+1$ and the vertices $\gamma_i$ are all different then, for any  $i\neq j$, there are polynomials $f_i$, $i = 0, \dots, k$, such that  $1=\sum_{i=0}^k\ell_i^{r_i+1}f_i$ \cite[Proposition 1.8]{mourrain2014dimension}.
	Thus we can write
	\begin{equation*}
	\ideal_\edge^{\bsmoothr} 
	= \ell_A^{r+1}\sum_{i=0}^k\ell_{i}^{\smooth_i+1}\PP_{(\degreeu-\smooth_i-1)(\degreev-r-1)}\;.	
	\end{equation*}
	Then, any element $\ell_A^{r+1}f$ in $\ideal^{\bsmoothr}_{\vertex_i}$ can be written as the sum of polynomials  $\ell_A^{r+1}\ell_i^{r_i+1}f_i\in \ideal^{\bsmoothr}_{\vertex_i}$ for some $f_i$ of degree $\leq \degreeu-\smooth_i-1$, $i = 0, \dots, k$.
	But $H_0(\idealComplex^{\bsmooth})=0$ by hypothesis and $\ell_A^{r+1}\ell_i^{r_i+1}f_i\in \ideal^{\bsmooth}_{\vertex_i}$ for all $i$.
	Hence, in the complex $\idealComplex^{\bsmoothr} / \idealComplex^\bsmooth$ all $\vertexE{i} \ideal^{\bsmoothr}_{\vertex_i}/ \ideal^{\bsmooth}_{\vertex_i}$ are in the image of the boundary map.
	Therefore, $H_0(\idealComplex^{\bsmoothr} / \idealComplex^\bsmooth) = 0$ and the claim follows from Proposition \ref{prop:alternate_ideal_homology}.
\end{proof}

\begin{remark}
	Theorem \ref{thm:dimension} discusses the dimension when the smoothness is reduced across a single segment of the mesh.
	Its successive applications can help us compute the dimension of a large class of splines on $\mesh$ with mixed smoothness.
\end{remark}

Let us present an example application of Theorem \ref{thm:dimension} to a special space of splines called PHT-splines \cite{deng2008polynomial}.
Corollary \ref{cor:pht} helps compute the dimension of PHT-splines of mixed smoothness; alternatively, Bernstein--\Bezier techniques can be used to obtain the result.

\begin{definition}[$(\degreeu+1,\degreev+1)$ smoothness distribution]
	The smoothness distribution $\bsmooth$ will be called an $(\degreeu+1,\degreev+1)$ smoothness distribution if for all edges $\edge \in \meshEH$ (resp., $\meshEV$), $\bsmooth(\edge) \leq (\degreev-1)/2$ (resp., $(\degreeu-1)/2$).
\end{definition}

\begin{theorem}\label{thm:pht}
	Let $\bsmooth$ be an $(\degreeu+1,\degreev+1)$ smoothness distribution, and let $\bsmoothr$ be any smoothness distribution such that $\bsmoothr(\edge) \leq \bsmooth(\edge)$ for all edges of $\mesh$.
	If $H_0(\idealComplex^{\bsmooth}) = 0$, then $H_0(\idealComplex^{\bsmoothr}) = 0$ and $\dimwp{\splSpace^{\bsmoothr}} = \euler{\quotientComplex^{\bsmoothr}}$.
\end{theorem}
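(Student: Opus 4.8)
The plan is to pass from $\bsmooth$ to $\bsmoothr$ by lowering the smoothness across one interior edge at a time, invoking Theorem \ref{thm:dimension} with a single-edge segment at each step and chaining the conclusions by induction. Concretely, I would enumerate the interior edges on which $\bsmoothr$ is strictly smaller than $\bsmooth$ as $\edge_1, \dots, \edge_N$, and build intermediate distributions $\bsmooth = \bsmooth^{(0)}, \bsmooth^{(1)}, \dots, \bsmooth^{(N)} = \bsmoothr$ in which $\bsmooth^{(j)}$ already agrees with $\bsmoothr$ on $\edge_1, \dots, \edge_j$ and still equals $\bsmooth$ elsewhere. The step $\bsmooth^{(j-1)} \to \bsmooth^{(j)}$ reduces smoothness only across the single edge $\edge_j$, so it is exactly the hypothesis of Theorem \ref{thm:dimension} with $A$ the segment consisting of $\edge_j$ together with its two distinct endpoints.

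The heart of the argument is to check that the $(\degreeu+1, \degreev+1)$ hypothesis forces requirement (b) of Theorem \ref{thm:dimension} for every such single-edge segment. Take $\edge_j$ horizontal with endpoints $\vertex_0, \vertex_1$. An interior endpoint lies only on interior vertical edges, and the hypothesis bounds $\bsmooth(\edge') \leq (\degreeu-1)/2$ on every vertical edge $\edge'$; since $\bsmooth^{(j)} \leq \bsmooth$ this persists for $\bsmooth^{(j)}$, so each interior endpoint contributes $(\degreeu - \bsmooth^{(j)}_h(\vertex_i))_+ \geq \degreeu - (\degreeu-1)/2 = (\degreeu+1)/2$ to the weight. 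The two endpoints therefore give $\omega^{\bsmooth^{(j)}}(A) \geq \degreeu + 1$, which is precisely condition (b). The vertical case is symmetric, using $\bsmooth(\edge') \leq (\degreev-1)/2$ on horizontal edges to obtain $\omega^{\bsmooth^{(j)}}(A) \geq \degreev + 1$. I would also record that lowering smoothness can only decrease $\bsmooth_h$ and $\bsmooth_v$ at the vertices, so these weights are monotone along the chain and the bound never degrades at a later step.

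With condition (b) verified at every step, the induction runs cleanly: $H_0(\idealComplex^{\bsmooth^{(0)}}) = H_0(\idealComplex^{\bsmooth}) = 0$ by hypothesis, and Theorem \ref{thm:dimension} applied to $\bsmooth^{(j-1)} \to \bsmooth^{(j)}$ carries $H_0(\idealComplex^{\bsmooth^{(j-1)}}) = 0$ forward to $H_0(\idealComplex^{\bsmooth^{(j)}}) = 0$. After $N$ steps this yields $H_0(\idealComplex^{\bsmoothr}) = 0$, and Corollary \ref{cor:stable_dimension} then delivers $\dimwp{\splSpace^{\bsmoothr}} = \euler{\quotientComplex^{\bsmoothr}}$.

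I expect the main obstacle to be the bookkeeping that keeps the successive applications of Theorem \ref{thm:dimension} mutually consistent, rather than any single deep step. Its internal hypothesis $\smooth_i = \bsmooth_h(\vertex_i) = \bsmoothr_h(\vertex_i)$ at the segment vertices must hold at each step; this is automatic when $\edge_j$ is horizontal, since $\bsmooth_h$ depends only on the vertical edges at the endpoints, which the step leaves untouched, and symmetrically when $\edge_j$ is vertical. An endpoint lying on $\domainBnd$ needs a separate remark: it falls outside the support of $H_0(\idealComplex^{\bsmoothr}/\idealComplex^{\bsmooth})$ by Lemma \ref{lem:support}, so only interior endpoints must be covered, and each interior endpoint already supplies its full $(\degreeu+1)/2$ (resp. $(\degreev+1)/2$), keeping the weight estimate robust.
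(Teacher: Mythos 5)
Your proposal is correct and follows essentially the same route as the paper: reduce the smoothness one interior edge at a time, and use the $(\degreeu+1,\degreev+1)$ bound to show that the two transversal edges at the endpoints force each single-edge segment to satisfy condition (b) of Theorem \ref{thm:dimension}, so that $H_0 = 0$ propagates by induction and Corollary \ref{cor:stable_dimension} gives the dimension formula. Your write-up is in fact more explicit than the paper's (which compresses the weight computation into one sentence), but the argument is the same.
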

\begin{proof}
	Since each interior edge is intersected by two transversal edges on its boundary, by the definition of $\bsmooth$ the weight of each interior edge satisfies condition (b) from Theorem \ref{thm:dimension}.
	Therefore, we can move from the smoothness distribution $\bsmooth$ to $\bsmoothr$ one edge at a time; at each stage, $H_0(\idealComplex) = 0$ and Theorem \ref{thm:dimension}(b) will be applicable.
\end{proof}

\begin{corollary}[PHT-splines of mixed smoothness]\label{cor:pht}
	Let $\mesh$ be a hierarchical T-mesh and let $\bsmooth$ be an $(\degreeu+1,\degreev+1)$ smoothness distribution such that $\bsmooth(\edge) = \bsmooth(\edge')$ for all edges $\edge$ and $\edge'$ that belong to the same segment. Then, from \cite{mourrain2014dimension}, $H_0(\idealComplex^{\bsmooth}) = 0$.
	Therefore, for any other smoothness distribution $\bsmoothr$ as in Theorem \ref{thm:pht}, we have
	\begin{equation*}
		\dimwp{\splSpace^{\bsmoothr}} = \euler{\quotientComplex^{\bsmoothr}}\;.
	\end{equation*}
\end{corollary}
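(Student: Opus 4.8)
The plan is to obtain the conclusion as a direct application of Theorem~\ref{thm:pht}, after first securing its two hypotheses for the space $\splSpace^{\bsmooth}$. That theorem requires $\bsmooth$ to be an $(\degreeu+1,\degreev+1)$ smoothness distribution --- which is assumed here --- together with $H_0(\idealComplex^{\bsmooth}) = 0$. The only genuine content is therefore establishing this vanishing for the stated family of meshes and distributions; once it is in hand, the corollary follows verbatim.

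First I would establish $H_0(\idealComplex^{\bsmooth}) = 0$ using \cite{mourrain2014dimension}. The mesh $\mesh$ is hierarchical and, by hypothesis, $\bsmooth$ is constant along every segment; these are precisely the structural conditions under which Mourrain's dimension formula is exact, i.e., the dimension of the spline space equals its combinatorial count and is independent of the geometric embedding. In the present homological language, the gap between the true dimension and the Euler characteristic $\euler{\quotientComplex^{\bsmooth}}$ is measured exactly by $\dimwp{H_0(\idealComplex^{\bsmooth})}$ (the Theorem preceding Corollary~\ref{cor:stable_dimension}). Hence exactness of Mourrain's formula translates into $H_0(\idealComplex^{\bsmooth}) = 0$. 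The point to verify carefully is that Mourrain's stability hypotheses and the vanishing of $H_0(\idealComplex^{\bsmooth})$ refer to the same obstruction: although \cite{mourrain2014dimension} is phrased in terms of a different but equivalent complex, both descriptions localise the obstruction at the interior vertices and yield the identical comparison between the genuine dimension and the combinatorial Euler characteristic used here.

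With both hypotheses secured, I would simply invoke Theorem~\ref{thm:pht}. Since $\bsmooth$ is an $(\degreeu+1,\degreev+1)$ distribution and $H_0(\idealComplex^{\bsmooth}) = 0$, the theorem guarantees that for any coarser distribution $\bsmoothr$ with $\bsmoothr(\edge) \leq \bsmooth(\edge)$ on all edges one has $H_0(\idealComplex^{\bsmoothr}) = 0$, and therefore $\dimwp{\splSpace^{\bsmoothr}} = \euler{\quotientComplex^{\bsmoothr}}$, which is exactly the claim. Internally, Theorem~\ref{thm:pht} reduces the smoothness one edge at a time, at each stage checking condition~(b) of Theorem~\ref{thm:dimension} for the single-edge segment: the $(\degreeu+1,\degreev+1)$ bound forces each of the two transversal endpoints to contribute at least $(\degreeu+1)/2$ (resp.\ $(\degreev+1)/2$) to the weight, so the weight of every interior edge meets the required threshold.

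The main obstacle is the bookkeeping in the first step --- namely, confirming that the stability result cited from \cite{mourrain2014dimension} really delivers $H_0(\idealComplex^{\bsmooth}) = 0$ for this family, rather than a dimension formula valid only under additional genericity. One must check that the hierarchical construction together with segment-wise constant smoothness places the configuration squarely inside Mourrain's hypotheses, and that no boundary or degenerate segment escapes them. Everything past this point is a mechanical appeal to the already-proved Theorem~\ref{thm:pht}, and the Bernstein--\Bezier route mentioned in the text offers an independent check should the translation of Mourrain's hypotheses prove delicate.
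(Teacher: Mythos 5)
Your proposal is correct and matches the paper's own (inline) justification exactly: it cites \cite{mourrain2014dimension} for a hierarchical T-mesh with segment-wise constant smoothness to obtain $H_0(\idealComplex^{\bsmooth}) = 0$, and then applies Theorem~\ref{thm:pht} to pass to any coarser distribution $\bsmoothr$. Your additional check that the $(\degreeu+1,\degreev+1)$ bound gives each transversal endpoint a weight contribution of at least $(\degreeu+1)/2$ (resp.\ $(\degreev+1)/2$), so that every single-edge segment meets condition~(b) of Theorem~\ref{thm:dimension}, is precisely the argument inside the paper's proof of Theorem~\ref{thm:pht}.
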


	\section{Examples}\label{sec:examples}
\begin{figure}[h]
	\subcaptionbox{$\splSpace^{\bsmooth}$}[0.49\textwidth]{%
	\tikzsetnextfilename{./tikz/images/mixed_reg_ex2a}%
%
%

\begin{tikzpicture}[scale=1.7,transform shape]
	 \begin{scope}
		\coordinate (v0) at (0.0000000000000000,0.0000000000000000) {};
		\coordinate (v1) at (4.0000000000000000,0.0000000000000000) {};
		\coordinate (v2) at (4.0000000000000000,4.0000000000000000) {};
		\coordinate (v3) at (0.0000000000000000,4.0000000000000000) {};
		\coordinate (v4) at (1.0000000000000000,0.0000000000000000) {};
		\coordinate (v5) at (1.0000000000000000,4.0000000000000000) {};
		\coordinate (v6) at (3.0000000000000000,0.0000000000000000) {};
		\coordinate (v7) at (3.0000000000000000,4.0000000000000000) {};
		\coordinate (v8) at (0.0000000000000000,1.0000000000000000) {};
		\coordinate (v9) at (1.0000000000000000,1.0000000000000000) {};
		\coordinate (v10) at (3.0000000000000000,1.0000000000000000) {};
		\coordinate (v11) at (4.0000000000000000,1.0000000000000000) {};
		\coordinate (v12) at (0.0000000000000000,3.0000000000000000) {};
		\coordinate (v13) at (1.0000000000000000,3.0000000000000000) {};
		\coordinate (v14) at (3.0000000000000000,3.0000000000000000) {};
		\coordinate (v15) at (4.0000000000000000,3.0000000000000000) {};
		\coordinate (v16) at (2.0000000000000000,1.0000000000000000) {};
		\coordinate (v17) at (2.0000000000000000,3.0000000000000000) {};
		\coordinate (v18) at (2.0000000000000000,1.5000000000000000) {};
		\coordinate (v19) at (3.0000000000000000,1.5000000000000000) {};
		\coordinate (v20) at (2.0000000000000000,2.5000000000000000) {};
		\coordinate (v21) at (3.0000000000000000,2.5000000000000000) {};
		\coordinate (v22) at (2.5000000000000000,1.5000000000000000) {};
		\coordinate (v23) at (2.5000000000000000,2.5000000000000000) {};
		\coordinate (v24) at (2.0000000000000000,2.0000000000000000) {};
		\coordinate (v25) at (2.5000000000000000,2.0000000000000000) {};

		 \draw[step=1,thin] (v4.center) -- (v9.center) -- (v8.center) -- (v0.center) -- cycle;
		 \draw[step=1,thin] (v6.center) -- (v10.center) -- (v16.center) -- (v9.center) -- (v4.center) -- cycle;
		 \draw[step=1,thin] (v1.center) -- (v11.center) -- (v10.center) -- (v6.center) -- cycle;
		 \draw[step=1,thin] (v9.center) -- (v13.center) -- (v12.center) -- (v8.center) -- cycle;
		 \draw[step=1,thin] (v16.center) -- (v18.center) -- (v24.center) -- (v20.center) -- (v17.center) -- (v13.center) -- (v9.center) -- cycle;
		 \draw[step=1,thin] (v11.center) -- (v15.center) -- (v14.center) -- (v21.center) -- (v19.center) -- (v10.center) -- cycle;
		 \draw[step=1,thin] (v13.center) -- (v5.center) -- (v3.center) -- (v12.center) -- cycle;
		 \draw[step=1,thin] (v17.center) -- (v14.center) -- (v7.center) -- (v5.center) -- (v13.center) -- cycle;
		 \draw[step=1,thin] (v15.center) -- (v2.center) -- (v7.center) -- (v14.center) -- cycle;
		 \draw[step=1,thin] (v10.center) -- (v19.center) -- (v22.center) -- (v18.center) -- (v16.center) -- cycle;
		 \draw[step=1,thin] (v22.center) -- (v25.center) -- (v24.center) -- (v18.center) -- cycle;
		 \draw[step=1,thin] (v23.center) -- (v21.center) -- (v14.center) -- (v17.center) -- (v20.center) -- cycle;
		 \draw[step=1,thin] (v19.center) -- (v21.center) -- (v23.center) -- (v25.center) -- (v22.center) -- cycle;
		 \draw[step=1,thin] (v25.center) -- (v23.center) -- (v20.center) -- (v24.center) -- cycle;

		\node[scale=0.3,draw,black,thin,fill=white,circle,minimum size=2pt,inner sep=0] at (v0) {$\vertex_{0}$};
		\node[scale=0.3,draw,black,thin,fill=white,circle,minimum size=2pt,inner sep=0] at (v1) {$\vertex_{1}$};
		\node[scale=0.3,draw,black,thin,fill=white,circle,minimum size=2pt,inner sep=0] at (v2) {$\vertex_{2}$};
		\node[scale=0.3,draw,black,thin,fill=white,circle,minimum size=2pt,inner sep=0] at (v3) {$\vertex_{3}$};
		\node[scale=0.3,draw,black,thin,fill=white,circle,minimum size=2pt,inner sep=0] at (v4) {$\vertex_{4}$};
		\node[scale=0.3,draw,black,thin,fill=white,circle,minimum size=2pt,inner sep=0] at (v5) {$\vertex_{5}$};
		\node[scale=0.3,draw,black,thin,fill=white,circle,minimum size=2pt,inner sep=0] at (v6) {$\vertex_{6}$};
		\node[scale=0.3,draw,black,thin,fill=white,circle,minimum size=2pt,inner sep=0] at (v7) {$\vertex_{7}$};
		\node[scale=0.3,draw,black,thin,fill=white,circle,minimum size=2pt,inner sep=0] at (v8) {$\vertex_{8}$};
		\node[scale=0.3,draw,black,thin,fill=white,circle,minimum size=2pt,inner sep=0] at (v9) {$\vertex_{9}$};
		\node[scale=0.3,draw,black,thin,fill=white,circle,minimum size=2pt,inner sep=0] at (v10) {$\vertex_{10}$};
		\node[scale=0.3,draw,black,thin,fill=white,circle,minimum size=2pt,inner sep=0] at (v11) {$\vertex_{11}$};
		\node[scale=0.3,draw,black,thin,fill=white,circle,minimum size=2pt,inner sep=0] at (v12) {$\vertex_{12}$};
		\node[scale=0.3,draw,black,thin,fill=white,circle,minimum size=2pt,inner sep=0] at (v13) {$\vertex_{13}$};
		\node[scale=0.3,draw,black,thin,fill=white,circle,minimum size=2pt,inner sep=0] at (v14) {$\vertex_{14}$};
		\node[scale=0.3,draw,black,thin,fill=white,circle,minimum size=2pt,inner sep=0] at (v15) {$\vertex_{15}$};
		\node[scale=0.3,draw,black,thin,fill=white,circle,minimum size=2pt,inner sep=0] at (v16) {$\vertex_{16}$};
		\node[scale=0.3,draw,black,thin,fill=white,circle,minimum size=2pt,inner sep=0] at (v17) {$\vertex_{17}$};
		\node[scale=0.3,draw,black,thin,fill=white,circle,minimum size=2pt,inner sep=0] at (v18) {$\vertex_{18}$};
		\node[scale=0.3,draw,black,thin,fill=white,circle,minimum size=2pt,inner sep=0] at (v19) {$\vertex_{19}$};
		\node[scale=0.3,draw,black,thin,fill=white,circle,minimum size=2pt,inner sep=0] at (v20) {$\vertex_{20}$};
		\node[scale=0.3,draw,black,thin,fill=white,circle,minimum size=2pt,inner sep=0] at (v21) {$\vertex_{21}$};
		\node[scale=0.3,draw,black,thin,fill=white,circle,minimum size=2pt,inner sep=0] at (v22) {$\vertex_{22}$};
		\node[scale=0.3,draw,black,thin,fill=white,circle,minimum size=2pt,inner sep=0] at (v23) {$\vertex_{23}$};
		\node[scale=0.3,draw,black,thin,fill=white,circle,minimum size=2pt,inner sep=0] at (v24) {$\vertex_{24}$};
		\node[scale=0.3,draw,black,thin,fill=white,circle,minimum size=2pt,inner sep=0] at (v25) {$\vertex_{25}$};

		\node[scale=0.3,below,inner sep=0] at ($(v0)!0.5!(v4)$) {$\edge_{0}(-1)$};
		\node[scale=0.3,left,inner sep=0] at ($(v1)!0.5!(v11)$) {$\edge_{1}(-1)$};
		\node[scale=0.3,below,inner sep=0] at ($(v3)!0.5!(v5)$) {$\edge_{2}(-1)$};
		\node[scale=0.3,left,inner sep=0] at ($(v0)!0.5!(v8)$) {$\edge_{3}(-1)$};
		\node[scale=0.3,below,inner sep=0] at ($(v4)!0.5!(v6)$) {$\edge_{4}(-1)$};
		\node[scale=0.3,below,inner sep=0] at ($(v5)!0.5!(v7)$) {$\edge_{5}(-1)$};
		\node[scale=0.3,left,inner sep=0] at ($(v4)!0.5!(v9)$) {$\edge_{6}(1)$};
		\node[scale=0.3,below,inner sep=0] at ($(v6)!0.5!(v1)$) {$\edge_{7}(-1)$};
		\node[scale=0.3,below,inner sep=0] at ($(v7)!0.5!(v2)$) {$\edge_{8}(-1)$};
		\node[scale=0.3,left,inner sep=0] at ($(v6)!0.5!(v10)$) {$\edge_{9}(1)$};
		\node[scale=0.3,left,inner sep=0] at ($(v8)!0.5!(v12)$) {$\edge_{10}(-1)$};
		\node[scale=0.3,left,inner sep=0] at ($(v9)!0.5!(v13)$) {$\edge_{11}(1)$};
		\node[scale=0.3,below,inner sep=0] at ($(v8)!0.5!(v9)$) {$\edge_{12}(1)$};
		\node[scale=0.3,left,inner sep=0] at ($(v10)!0.5!(v19)$) {$\edge_{13}(1)$};
		\node[scale=0.3,below,inner sep=0] at ($(v9)!0.5!(v16)$) {$\edge_{14}(1)$};
		\node[scale=0.3,left,inner sep=0] at ($(v11)!0.5!(v15)$) {$\edge_{15}(-1)$};
		\node[scale=0.3,below,inner sep=0] at ($(v10)!0.5!(v11)$) {$\edge_{16}(1)$};
		\node[scale=0.3,left,inner sep=0] at ($(v12)!0.5!(v3)$) {$\edge_{17}(-1)$};
		\node[scale=0.3,left,inner sep=0] at ($(v13)!0.5!(v5)$) {$\edge_{18}(1)$};
		\node[scale=0.3,below,inner sep=0] at ($(v12)!0.5!(v13)$) {$\edge_{19}(1)$};
		\node[scale=0.3,left,inner sep=0] at ($(v14)!0.5!(v7)$) {$\edge_{20}(1)$};
		\node[scale=0.3,below,inner sep=0] at ($(v13)!0.5!(v17)$) {$\edge_{21}(1)$};
		\node[scale=0.3,left,inner sep=0] at ($(v15)!0.5!(v2)$) {$\edge_{22}(-1)$};
		\node[scale=0.3,below,inner sep=0] at ($(v14)!0.5!(v15)$) {$\edge_{23}(1)$};
		\node[scale=0.3,below,inner sep=0] at ($(v16)!0.5!(v10)$) {$\edge_{24}(1)$};
		\node[scale=0.3,below,inner sep=0] at ($(v17)!0.5!(v14)$) {$\edge_{25}(1)$};
		\node[scale=0.3,left,inner sep=0] at ($(v16)!0.5!(v18)$) {$\edge_{26}(1)$};
		\node[scale=0.3,left,inner sep=0] at ($(v18)!0.5!(v24)$) {$\edge_{27}(1)$};
		\node[scale=0.3,left,inner sep=0] at ($(v19)!0.5!(v21)$) {$\edge_{28}(1)$};
		\node[scale=0.3,below,inner sep=0] at ($(v18)!0.5!(v22)$) {$\edge_{29}(1)$};
		\node[scale=0.3,left,inner sep=0] at ($(v20)!0.5!(v17)$) {$\edge_{30}(1)$};
		\node[scale=0.3,left,inner sep=0] at ($(v21)!0.5!(v14)$) {$\edge_{31}(1)$};
		\node[scale=0.3,below,inner sep=0] at ($(v20)!0.5!(v23)$) {$\edge_{32}(1)$};
		\node[scale=0.3,below,inner sep=0] at ($(v22)!0.5!(v19)$) {$\edge_{33}(1)$};
		\node[scale=0.3,below,inner sep=0] at ($(v23)!0.5!(v21)$) {$\edge_{34}(1)$};
		\node[scale=0.3,left,inner sep=0] at ($(v22)!0.5!(v25)$) {$\edge_{35}(1)$};
		\node[scale=0.3,left,inner sep=0] at ($(v24)!0.5!(v20)$) {$\edge_{36}(1)$};
		\node[scale=0.3,left,inner sep=0] at ($(v25)!0.5!(v23)$) {$\edge_{37}(1)$};
		\node[scale=0.3,below,inner sep=0] at ($(v24)!0.5!(v25)$) {$\edge_{38}(1)$};

	 \end{scope}
\end{tikzpicture}

}
	\subcaptionbox{$\splSpace^{\bsmoothr}$}[0.49\textwidth]{%
	\tikzsetnextfilename{./tikz/images/mixed_reg_ex2b}%
%
%

\begin{tikzpicture}[scale=1.7,transform shape]
	 \begin{scope}
		\coordinate (v0) at (0.0000000000000000,0.0000000000000000) {};
		\coordinate (v1) at (4.0000000000000000,0.0000000000000000) {};
		\coordinate (v2) at (4.0000000000000000,4.0000000000000000) {};
		\coordinate (v3) at (0.0000000000000000,4.0000000000000000) {};
		\coordinate (v4) at (1.0000000000000000,0.0000000000000000) {};
		\coordinate (v5) at (1.0000000000000000,4.0000000000000000) {};
		\coordinate (v6) at (3.0000000000000000,0.0000000000000000) {};
		\coordinate (v7) at (3.0000000000000000,4.0000000000000000) {};
		\coordinate (v8) at (0.0000000000000000,1.0000000000000000) {};
		\coordinate (v9) at (1.0000000000000000,1.0000000000000000) {};
		\coordinate (v10) at (3.0000000000000000,1.0000000000000000) {};
		\coordinate (v11) at (4.0000000000000000,1.0000000000000000) {};
		\coordinate (v12) at (0.0000000000000000,3.0000000000000000) {};
		\coordinate (v13) at (1.0000000000000000,3.0000000000000000) {};
		\coordinate (v14) at (3.0000000000000000,3.0000000000000000) {};
		\coordinate (v15) at (4.0000000000000000,3.0000000000000000) {};
		\coordinate (v16) at (2.0000000000000000,1.0000000000000000) {};
		\coordinate (v17) at (2.0000000000000000,3.0000000000000000) {};
		\coordinate (v18) at (2.0000000000000000,1.5000000000000000) {};
		\coordinate (v19) at (3.0000000000000000,1.5000000000000000) {};
		\coordinate (v20) at (2.0000000000000000,2.5000000000000000) {};
		\coordinate (v21) at (3.0000000000000000,2.5000000000000000) {};
		\coordinate (v22) at (2.5000000000000000,1.5000000000000000) {};
		\coordinate (v23) at (2.5000000000000000,2.5000000000000000) {};
		\coordinate (v24) at (2.0000000000000000,2.0000000000000000) {};
		\coordinate (v25) at (2.5000000000000000,2.0000000000000000) {};

		 \draw[step=1,thin] (v4.center) -- (v9.center) -- (v8.center) -- (v0.center) -- cycle;
		 \draw[step=1,thin] (v6.center) -- (v10.center) -- (v16.center) -- (v9.center) -- (v4.center) -- cycle;
		 \draw[step=1,thin] (v1.center) -- (v11.center) -- (v10.center) -- (v6.center) -- cycle;
		 \draw[step=1,thin] (v9.center) -- (v13.center) -- (v12.center) -- (v8.center) -- cycle;
		 \draw[step=1,thin] (v16.center) -- (v18.center) -- (v24.center) -- (v20.center) -- (v17.center) -- (v13.center) -- (v9.center) -- cycle;
		 \draw[step=1,thin] (v11.center) -- (v15.center) -- (v14.center) -- (v21.center) -- (v19.center) -- (v10.center) -- cycle;
		 \draw[step=1,thin] (v13.center) -- (v5.center) -- (v3.center) -- (v12.center) -- cycle;
		 \draw[step=1,thin] (v17.center) -- (v14.center) -- (v7.center) -- (v5.center) -- (v13.center) -- cycle;
		 \draw[step=1,thin] (v15.center) -- (v2.center) -- (v7.center) -- (v14.center) -- cycle;
		 \draw[step=1,thin] (v10.center) -- (v19.center) -- (v22.center) -- (v18.center) -- (v16.center) -- cycle;
		 \draw[step=1,thin] (v22.center) -- (v25.center) -- (v24.center) -- (v18.center) -- cycle;
		 \draw[step=1,thin] (v23.center) -- (v21.center) -- (v14.center) -- (v17.center) -- (v20.center) -- cycle;
		 \draw[step=1,thin] (v19.center) -- (v21.center) -- (v23.center) -- (v25.center) -- (v22.center) -- cycle;
		 \draw[step=1,thin] (v25.center) -- (v23.center) -- (v20.center) -- (v24.center) -- cycle;

		\node[scale=0.3,draw,black,thin,fill=white,circle,minimum size=2pt,inner sep=0] at (v0) {$\vertex_{0}$};
		\node[scale=0.3,draw,black,thin,fill=white,circle,minimum size=2pt,inner sep=0] at (v1) {$\vertex_{1}$};
		\node[scale=0.3,draw,black,thin,fill=white,circle,minimum size=2pt,inner sep=0] at (v2) {$\vertex_{2}$};
		\node[scale=0.3,draw,black,thin,fill=white,circle,minimum size=2pt,inner sep=0] at (v3) {$\vertex_{3}$};
		\node[scale=0.3,draw,black,thin,fill=white,circle,minimum size=2pt,inner sep=0] at (v4) {$\vertex_{4}$};
		\node[scale=0.3,draw,black,thin,fill=white,circle,minimum size=2pt,inner sep=0] at (v5) {$\vertex_{5}$};
		\node[scale=0.3,draw,black,thin,fill=white,circle,minimum size=2pt,inner sep=0] at (v6) {$\vertex_{6}$};
		\node[scale=0.3,draw,black,thin,fill=white,circle,minimum size=2pt,inner sep=0] at (v7) {$\vertex_{7}$};
		\node[scale=0.3,draw,black,thin,fill=white,circle,minimum size=2pt,inner sep=0] at (v8) {$\vertex_{8}$};
		\node[scale=0.3,draw,black,thin,fill=white,circle,minimum size=2pt,inner sep=0] at (v9) {$\vertex_{9}$};
		\node[scale=0.3,draw,black,thin,fill=white,circle,minimum size=2pt,inner sep=0] at (v10) {$\vertex_{10}$};
		\node[scale=0.3,draw,black,thin,fill=white,circle,minimum size=2pt,inner sep=0] at (v11) {$\vertex_{11}$};
		\node[scale=0.3,draw,black,thin,fill=white,circle,minimum size=2pt,inner sep=0] at (v12) {$\vertex_{12}$};
		\node[scale=0.3,draw,black,thin,fill=white,circle,minimum size=2pt,inner sep=0] at (v13) {$\vertex_{13}$};
		\node[scale=0.3,draw,black,thin,fill=white,circle,minimum size=2pt,inner sep=0] at (v14) {$\vertex_{14}$};
		\node[scale=0.3,draw,black,thin,fill=white,circle,minimum size=2pt,inner sep=0] at (v15) {$\vertex_{15}$};
		\node[scale=0.3,draw,black,thin,fill=white,circle,minimum size=2pt,inner sep=0] at (v16) {$\vertex_{16}$};
		\node[scale=0.3,draw,black,thin,fill=white,circle,minimum size=2pt,inner sep=0] at (v17) {$\vertex_{17}$};
		\node[scale=0.3,draw,black,thin,fill=white,circle,minimum size=2pt,inner sep=0] at (v18) {$\vertex_{18}$};
		\node[scale=0.3,draw,black,thin,fill=white,circle,minimum size=2pt,inner sep=0] at (v19) {$\vertex_{19}$};
		\node[scale=0.3,draw,black,thin,fill=white,circle,minimum size=2pt,inner sep=0] at (v20) {$\vertex_{20}$};
		\node[scale=0.3,draw,black,thin,fill=white,circle,minimum size=2pt,inner sep=0] at (v21) {$\vertex_{21}$};
		\node[scale=0.3,draw,black,thin,fill=white,circle,minimum size=2pt,inner sep=0] at (v22) {$\vertex_{22}$};
		\node[scale=0.3,draw,black,thin,fill=white,circle,minimum size=2pt,inner sep=0] at (v23) {$\vertex_{23}$};
		\node[scale=0.3,draw,black,thin,fill=white,circle,minimum size=2pt,inner sep=0] at (v24) {$\vertex_{24}$};
		\node[scale=0.3,draw,black,thin,fill=white,circle,minimum size=2pt,inner sep=0] at (v25) {$\vertex_{25}$};

		\node[scale=0.3,below,inner sep=0] at ($(v0)!0.5!(v4)$) {$\edge_{0}(-1)$};
		\node[scale=0.3,left,inner sep=0] at ($(v1)!0.5!(v11)$) {$\edge_{1}(-1)$};
		\node[scale=0.3,below,inner sep=0] at ($(v3)!0.5!(v5)$) {$\edge_{2}(-1)$};
		\node[scale=0.3,left,inner sep=0] at ($(v0)!0.5!(v8)$) {$\edge_{3}(-1)$};
		\node[scale=0.3,below,inner sep=0] at ($(v4)!0.5!(v6)$) {$\edge_{4}(-1)$};
		\node[scale=0.3,below,inner sep=0] at ($(v5)!0.5!(v7)$) {$\edge_{5}(-1)$};
		\node[scale=0.3,left,inner sep=0] at ($(v4)!0.5!(v9)$) {$\edge_{6}(1)$};
		\node[scale=0.3,below,inner sep=0] at ($(v6)!0.5!(v1)$) {$\edge_{7}(-1)$};
		\node[scale=0.3,below,inner sep=0] at ($(v7)!0.5!(v2)$) {$\edge_{8}(-1)$};
		\node[scale=0.3,left,inner sep=0] at ($(v6)!0.5!(v10)$) {$\edge_{9}(1)$};
		\node[scale=0.3,left,inner sep=0] at ($(v8)!0.5!(v12)$) {$\edge_{10}(-1)$};
		\node[scale=0.3,left,inner sep=0] at ($(v9)!0.5!(v13)$) {$\edge_{11}(1)$};
		\node[scale=0.3,below,inner sep=0] at ($(v8)!0.5!(v9)$) {$\edge_{12}(1)$};
		\node[scale=0.3,left,inner sep=0] at ($(v10)!0.5!(v19)$) {$\edge_{13}(1)$};
		\node[scale=0.3,below,inner sep=0] at ($(v9)!0.5!(v16)$) {$\edge_{14}(1)$};
		\node[scale=0.3,left,inner sep=0] at ($(v11)!0.5!(v15)$) {$\edge_{15}(-1)$};
		\node[scale=0.3,below,inner sep=0] at ($(v10)!0.5!(v11)$) {$\edge_{16}(1)$};
		\node[scale=0.3,left,inner sep=0] at ($(v12)!0.5!(v3)$) {$\edge_{17}(-1)$};
		\node[scale=0.3,left,inner sep=0] at ($(v13)!0.5!(v5)$) {$\edge_{18}(1)$};
		\node[scale=0.3,below,inner sep=0] at ($(v12)!0.5!(v13)$) {$\edge_{19}(1)$};
		\node[scale=0.3,left,inner sep=0] at ($(v14)!0.5!(v7)$) {$\edge_{20}(1)$};
		\node[scale=0.3,below,inner sep=0] at ($(v13)!0.5!(v17)$) {$\edge_{21}(1)$};
		\node[scale=0.3,left,inner sep=0] at ($(v15)!0.5!(v2)$) {$\edge_{22}(-1)$};
		\node[scale=0.3,below,inner sep=0] at ($(v14)!0.5!(v15)$) {$\edge_{23}(1)$};
		\node[scale=0.3,below,inner sep=0] at ($(v16)!0.5!(v10)$) {$\edge_{24}(1)$};
		\node[scale=0.3,below,inner sep=0] at ($(v17)!0.5!(v14)$) {$\edge_{25}(1)$};
		\node[scale=0.3,left,inner sep=0] at ($(v16)!0.5!(v18)$) {$\edge_{26}(1)$};
		\node[scale=0.3,left,inner sep=0] at ($(v18)!0.5!(v24)$) {$\color{blue}\edge_{27}(0)$};
		\node[scale=0.3,left,inner sep=0] at ($(v19)!0.5!(v21)$) {$\edge_{28}(1)$};
		\node[scale=0.3,below,inner sep=0] at ($(v18)!0.5!(v22)$) {$\edge_{29}(1)$};
		\node[scale=0.3,left,inner sep=0] at ($(v20)!0.5!(v17)$) {$\edge_{30}(1)$};
		\node[scale=0.3,left,inner sep=0] at ($(v21)!0.5!(v14)$) {$\edge_{31}(1)$};
		\node[scale=0.3,below,inner sep=0] at ($(v20)!0.5!(v23)$) {$\edge_{32}(1)$};
		\node[scale=0.3,below,inner sep=0] at ($(v22)!0.5!(v19)$) {$\edge_{33}(1)$};
		\node[scale=0.3,below,inner sep=0] at ($(v23)!0.5!(v21)$) {$\color{blue}\edge_{34}(-1)$};
		\node[scale=0.3,left,inner sep=0] at ($(v22)!0.5!(v25)$) {$\color{blue}\edge_{35}(0)$};
		\node[scale=0.3,left,inner sep=0] at ($(v24)!0.5!(v20)$) {$\color{blue}\edge_{36}(0)$};
		\node[scale=0.3,left,inner sep=0] at ($(v25)!0.5!(v23)$) {$\edge_{37}(1)$};
		\node[scale=0.3,below,inner sep=0] at ($(v24)!0.5!(v25)$) {$\edge_{38}(1)$};

	 \end{scope}
\end{tikzpicture}

}
	\caption{The above figures correspond to the PHT-spline setting considered in Example \ref{ex:ex1}.
		The smoothness required across each edge has been annotated in parenthesis next to the edge labels.
		Figure (a) shows the initial smoothness distribution, while Figure (b) shows the modified initial distributions; the modifications are limited to the edges labelled in blue.
	}
	\label{fig:ex1}
\end{figure}

\begin{figure}[p]
	\subcaptionbox{$\splSpace^{\bsmooth}$}[0.49\textwidth]{%
	\tikzsetnextfilename{./tikz/images/mixed_reg_ex1a}%
%
%

\begin{tikzpicture}[scale=1.7, transform shape]
	 \begin{scope}
		\coordinate (v0) at (0.0000000000000000,0.0000000000000000) {};
		\coordinate (v1) at (4.0000000000000000,0.0000000000000000) {};
		\coordinate (v2) at (4.0000000000000000,4.0000000000000000) {};
		\coordinate (v3) at (0.0000000000000000,4.0000000000000000) {};
		\coordinate (v4) at (1.0000000000000000,0.0000000000000000) {};
		\coordinate (v5) at (1.0000000000000000,4.0000000000000000) {};
		\coordinate (v6) at (3.0000000000000000,0.0000000000000000) {};
		\coordinate (v7) at (3.0000000000000000,4.0000000000000000) {};
		\coordinate (v8) at (0.0000000000000000,1.0000000000000000) {};
		\coordinate (v9) at (1.0000000000000000,1.0000000000000000) {};
		\coordinate (v10) at (3.0000000000000000,1.0000000000000000) {};
		\coordinate (v11) at (4.0000000000000000,1.0000000000000000) {};
		\coordinate (v12) at (0.0000000000000000,3.0000000000000000) {};
		\coordinate (v13) at (1.0000000000000000,3.0000000000000000) {};
		\coordinate (v14) at (3.0000000000000000,3.0000000000000000) {};
		\coordinate (v15) at (4.0000000000000000,3.0000000000000000) {};
		\coordinate (v16) at (2.0000000000000000,1.0000000000000000) {};
		\coordinate (v17) at (2.0000000000000000,3.0000000000000000) {};
		\coordinate (v18) at (2.0000000000000000,1.5000000000000000) {};
		\coordinate (v19) at (3.0000000000000000,1.5000000000000000) {};
		\coordinate (v20) at (2.0000000000000000,2.5000000000000000) {};
		\coordinate (v21) at (3.0000000000000000,2.5000000000000000) {};
		\coordinate (v22) at (2.5000000000000000,1.5000000000000000) {};
		\coordinate (v23) at (2.5000000000000000,2.5000000000000000) {};
		\coordinate (v24) at (1.0000000000000000,2.0000000000000000) {};
		\coordinate (v25) at (2.0000000000000000,2.0000000000000000) {};
		\coordinate (v26) at (2.5000000000000000,2.0000000000000000) {};

		 \draw[step=1,thin] (v4.center) -- (v9.center) -- (v8.center) -- (v0.center) -- cycle;
		 \draw[step=1,thin] (v6.center) -- (v10.center) -- (v16.center) -- (v9.center) -- (v4.center) -- cycle;
		 \draw[step=1,thin] (v1.center) -- (v11.center) -- (v10.center) -- (v6.center) -- cycle;
		 \draw[step=1,thin] (v9.center) -- (v24.center) -- (v13.center) -- (v12.center) -- (v8.center) -- cycle;
		 \draw[step=1,thin] (v16.center) -- (v18.center) -- (v25.center) -- (v24.center) -- (v9.center) -- cycle;
		 \draw[step=1,thin] (v11.center) -- (v15.center) -- (v14.center) -- (v21.center) -- (v19.center) -- (v10.center) -- cycle;
		 \draw[step=1,thin] (v13.center) -- (v5.center) -- (v3.center) -- (v12.center) -- cycle;
		 \draw[step=1,thin] (v17.center) -- (v14.center) -- (v7.center) -- (v5.center) -- (v13.center) -- cycle;
		 \draw[step=1,thin] (v15.center) -- (v2.center) -- (v7.center) -- (v14.center) -- cycle;
		 \draw[step=1,thin] (v10.center) -- (v19.center) -- (v22.center) -- (v18.center) -- (v16.center) -- cycle;
		 \draw[step=1,thin] (v22.center) -- (v26.center) -- (v25.center) -- (v18.center) -- cycle;
		 \draw[step=1,thin] (v23.center) -- (v21.center) -- (v14.center) -- (v17.center) -- (v20.center) -- cycle;
		 \draw[step=1,thin] (v19.center) -- (v21.center) -- (v23.center) -- (v26.center) -- (v22.center) -- cycle;
		 \draw[step=1,thin] (v25.center) -- (v20.center) -- (v17.center) -- (v13.center) -- (v24.center) -- cycle;
		 \draw[step=1,thin] (v26.center) -- (v23.center) -- (v20.center) -- (v25.center) -- cycle;

		\node[scale=0.3,draw,black,thin,fill=white,circle,minimum size=2pt,inner sep=0] at (v0) {$\vertex_{0}$};
		\node[scale=0.3,draw,black,thin,fill=white,circle,minimum size=2pt,inner sep=0] at (v1) {$\vertex_{1}$};
		\node[scale=0.3,draw,black,thin,fill=white,circle,minimum size=2pt,inner sep=0] at (v2) {$\vertex_{2}$};
		\node[scale=0.3,draw,black,thin,fill=white,circle,minimum size=2pt,inner sep=0] at (v3) {$\vertex_{3}$};
		\node[scale=0.3,draw,black,thin,fill=white,circle,minimum size=2pt,inner sep=0] at (v4) {$\vertex_{4}$};
		\node[scale=0.3,draw,black,thin,fill=white,circle,minimum size=2pt,inner sep=0] at (v5) {$\vertex_{5}$};
		\node[scale=0.3,draw,black,thin,fill=white,circle,minimum size=2pt,inner sep=0] at (v6) {$\vertex_{6}$};
		\node[scale=0.3,draw,black,thin,fill=white,circle,minimum size=2pt,inner sep=0] at (v7) {$\vertex_{7}$};
		\node[scale=0.3,draw,black,thin,fill=white,circle,minimum size=2pt,inner sep=0] at (v8) {$\vertex_{8}$};
		\node[scale=0.3,draw,black,thin,fill=white,circle,minimum size=2pt,inner sep=0] at (v9) {$\vertex_{9}$};
		\node[scale=0.3,draw,black,thin,fill=white,circle,minimum size=2pt,inner sep=0] at (v10) {$\vertex_{10}$};
		\node[scale=0.3,draw,black,thin,fill=white,circle,minimum size=2pt,inner sep=0] at (v11) {$\vertex_{11}$};
		\node[scale=0.3,draw,black,thin,fill=white,circle,minimum size=2pt,inner sep=0] at (v12) {$\vertex_{12}$};
		\node[scale=0.3,draw,black,thin,fill=white,circle,minimum size=2pt,inner sep=0] at (v13) {$\vertex_{13}$};
		\node[scale=0.3,draw,black,thin,fill=white,circle,minimum size=2pt,inner sep=0] at (v14) {$\vertex_{14}$};
		\node[scale=0.3,draw,black,thin,fill=white,circle,minimum size=2pt,inner sep=0] at (v15) {$\vertex_{15}$};
		\node[scale=0.3,draw,black,thin,fill=white,circle,minimum size=2pt,inner sep=0] at (v16) {$\vertex_{16}$};
		\node[scale=0.3,draw,black,thin,fill=white,circle,minimum size=2pt,inner sep=0] at (v17) {$\vertex_{17}$};
		\node[scale=0.3,draw,black,thin,fill=white,circle,minimum size=2pt,inner sep=0] at (v18) {$\vertex_{18}$};
		\node[scale=0.3,draw,black,thin,fill=white,circle,minimum size=2pt,inner sep=0] at (v19) {$\vertex_{19}$};
		\node[scale=0.3,draw,black,thin,fill=white,circle,minimum size=2pt,inner sep=0] at (v20) {$\vertex_{20}$};
		\node[scale=0.3,draw,black,thin,fill=white,circle,minimum size=2pt,inner sep=0] at (v21) {$\vertex_{21}$};
		\node[scale=0.3,draw,black,thin,fill=white,circle,minimum size=2pt,inner sep=0] at (v22) {$\vertex_{22}$};
		\node[scale=0.3,draw,black,thin,fill=white,circle,minimum size=2pt,inner sep=0] at (v23) {$\vertex_{23}$};
		\node[scale=0.3,draw,black,thin,fill=white,circle,minimum size=2pt,inner sep=0] at (v24) {$\vertex_{24}$};
		\node[scale=0.3,draw,black,thin,fill=white,circle,minimum size=2pt,inner sep=0] at (v25) {$\vertex_{25}$};
		\node[scale=0.3,draw,black,thin,fill=white,circle,minimum size=2pt,inner sep=0] at (v26) {$\vertex_{26}$};

		\node[scale=0.3,below,inner sep=0] at ($(v0)!0.5!(v4)$) {$\edge_{0}(-1)$};
		\node[scale=0.3,left,inner sep=0] at ($(v1)!0.5!(v11)$) {$\edge_{1}(-1)$};
		\node[scale=0.3,below,inner sep=0] at ($(v3)!0.5!(v5)$) {$\edge_{2}(-1)$};
		\node[scale=0.3,left,inner sep=0] at ($(v0)!0.5!(v8)$) {$\edge_{3}(-1)$};
		\node[scale=0.3,below,inner sep=0] at ($(v4)!0.5!(v6)$) {$\edge_{4}(-1)$};
		\node[scale=0.3,below,inner sep=0] at ($(v5)!0.5!(v7)$) {$\edge_{5}(-1)$};
		\node[scale=0.3,left,inner sep=0] at ($(v4)!0.5!(v9)$) {$\edge_{6}(2)$};
		\node[scale=0.3,below,inner sep=0] at ($(v6)!0.5!(v1)$) {$\edge_{7}(-1)$};
		\node[scale=0.3,below,inner sep=0] at ($(v7)!0.5!(v2)$) {$\edge_{8}(-1)$};
		\node[scale=0.3,left,inner sep=0] at ($(v6)!0.5!(v10)$) {$\edge_{9}(1)$};
		\node[scale=0.3,left,inner sep=0] at ($(v8)!0.5!(v12)$) {$\edge_{10}(-1)$};
		\node[scale=0.3,left,inner sep=0] at ($(v9)!0.5!(v24)$) {$\edge_{11}(2)$};
		\node[scale=0.3,below,inner sep=0] at ($(v8)!0.5!(v9)$) {$\edge_{12}(0)$};
		\node[scale=0.3,left,inner sep=0] at ($(v10)!0.5!(v19)$) {$\edge_{13}(1)$};
		\node[scale=0.3,below,inner sep=0] at ($(v9)!0.5!(v16)$) {$\edge_{14}(0)$};
		\node[scale=0.3,left,inner sep=0] at ($(v11)!0.5!(v15)$) {$\edge_{15}(-1)$};
		\node[scale=0.3,below,inner sep=0] at ($(v10)!0.5!(v11)$) {$\edge_{16}(0)$};
		\node[scale=0.3,left,inner sep=0] at ($(v12)!0.5!(v3)$) {$\edge_{17}(-1)$};
		\node[scale=0.3,left,inner sep=0] at ($(v13)!0.5!(v5)$) {$\edge_{18}(2)$};
		\node[scale=0.3,below,inner sep=0] at ($(v12)!0.5!(v13)$) {$\edge_{19}(2)$};
		\node[scale=0.3,left,inner sep=0] at ($(v14)!0.5!(v7)$) {$\edge_{20}(1)$};
		\node[scale=0.3,below,inner sep=0] at ($(v13)!0.5!(v17)$) {$\edge_{21}(2)$};
		\node[scale=0.3,left,inner sep=0] at ($(v15)!0.5!(v2)$) {$\edge_{22}(-1)$};
		\node[scale=0.3,below,inner sep=0] at ($(v14)!0.5!(v15)$) {$\edge_{23}(2)$};
		\node[scale=0.3,below,inner sep=0] at ($(v16)!0.5!(v10)$) {$\edge_{24}(0)$};
		\node[scale=0.3,below,inner sep=0] at ($(v17)!0.5!(v14)$) {$\edge_{25}(2)$};
		\node[scale=0.3,left,inner sep=0] at ($(v16)!0.5!(v18)$) {$\edge_{26}(1)$};
		\node[scale=0.3,left,inner sep=0] at ($(v18)!0.5!(v25)$) {$\edge_{27}(1)$};
		\node[scale=0.3,left,inner sep=0] at ($(v19)!0.5!(v21)$) {$\edge_{28}(1)$};
		\node[scale=0.3,below,inner sep=0] at ($(v18)!0.5!(v22)$) {$\edge_{29}(0)$};
		\node[scale=0.3,left,inner sep=0] at ($(v20)!0.5!(v17)$) {$\edge_{30}(1)$};
		\node[scale=0.3,left,inner sep=0] at ($(v21)!0.5!(v14)$) {$\edge_{31}(1)$};
		\node[scale=0.3,below,inner sep=0] at ($(v20)!0.5!(v23)$) {$\edge_{32}(2)$};
		\node[scale=0.3,below,inner sep=0] at ($(v22)!0.5!(v19)$) {$\edge_{33}(0)$};
		\node[scale=0.3,below,inner sep=0] at ($(v23)!0.5!(v21)$) {$\edge_{34}(2)$};
		\node[scale=0.3,left,inner sep=0] at ($(v22)!0.5!(v26)$) {$\edge_{35}(2)$};
		\node[scale=0.3,left,inner sep=0] at ($(v24)!0.5!(v13)$) {$\edge_{36}(2)$};
		\node[scale=0.3,left,inner sep=0] at ($(v25)!0.5!(v20)$) {$\edge_{37}(1)$};
		\node[scale=0.3,below,inner sep=0] at ($(v24)!0.5!(v25)$) {$\edge_{38}(2)$};
		\node[scale=0.3,left,inner sep=0] at ($(v26)!0.5!(v23)$) {$\edge_{39}(2)$};
		\node[scale=0.3,below,inner sep=0] at ($(v25)!0.5!(v26)$) {$\edge_{40}(2)$};

	 \end{scope}
\end{tikzpicture}

}
	\subcaptionbox{$\splSpace^{\bsmoothr_0}$}[0.49\textwidth]{%
	\tikzsetnextfilename{./tikz/images/mixed_reg_ex1b}%
%
%

\begin{tikzpicture}[scale=1.7, transform shape]
	 \begin{scope}
		\coordinate (v0) at (0.0000000000000000,0.0000000000000000) {};
		\coordinate (v1) at (4.0000000000000000,0.0000000000000000) {};
		\coordinate (v2) at (4.0000000000000000,4.0000000000000000) {};
		\coordinate (v3) at (0.0000000000000000,4.0000000000000000) {};
		\coordinate (v4) at (1.0000000000000000,0.0000000000000000) {};
		\coordinate (v5) at (1.0000000000000000,4.0000000000000000) {};
		\coordinate (v6) at (3.0000000000000000,0.0000000000000000) {};
		\coordinate (v7) at (3.0000000000000000,4.0000000000000000) {};
		\coordinate (v8) at (0.0000000000000000,1.0000000000000000) {};
		\coordinate (v9) at (1.0000000000000000,1.0000000000000000) {};
		\coordinate (v10) at (3.0000000000000000,1.0000000000000000) {};
		\coordinate (v11) at (4.0000000000000000,1.0000000000000000) {};
		\coordinate (v12) at (0.0000000000000000,3.0000000000000000) {};
		\coordinate (v13) at (1.0000000000000000,3.0000000000000000) {};
		\coordinate (v14) at (3.0000000000000000,3.0000000000000000) {};
		\coordinate (v15) at (4.0000000000000000,3.0000000000000000) {};
		\coordinate (v16) at (2.0000000000000000,1.0000000000000000) {};
		\coordinate (v17) at (2.0000000000000000,3.0000000000000000) {};
		\coordinate (v18) at (2.0000000000000000,1.5000000000000000) {};
		\coordinate (v19) at (3.0000000000000000,1.5000000000000000) {};
		\coordinate (v20) at (2.0000000000000000,2.5000000000000000) {};
		\coordinate (v21) at (3.0000000000000000,2.5000000000000000) {};
		\coordinate (v22) at (2.5000000000000000,1.5000000000000000) {};
		\coordinate (v23) at (2.5000000000000000,2.5000000000000000) {};
		\coordinate (v24) at (1.0000000000000000,2.0000000000000000) {};
		\coordinate (v25) at (2.0000000000000000,2.0000000000000000) {};
		\coordinate (v26) at (2.5000000000000000,2.0000000000000000) {};

		 \draw[step=1,thin] (v4.center) -- (v9.center) -- (v8.center) -- (v0.center) -- cycle;
		 \draw[step=1,thin] (v6.center) -- (v10.center) -- (v16.center) -- (v9.center) -- (v4.center) -- cycle;
		 \draw[step=1,thin] (v1.center) -- (v11.center) -- (v10.center) -- (v6.center) -- cycle;
		 \draw[step=1,thin] (v9.center) -- (v24.center) -- (v13.center) -- (v12.center) -- (v8.center) -- cycle;
		 \draw[step=1,thin] (v16.center) -- (v18.center) -- (v25.center) -- (v24.center) -- (v9.center) -- cycle;
		 \draw[step=1,thin] (v11.center) -- (v15.center) -- (v14.center) -- (v21.center) -- (v19.center) -- (v10.center) -- cycle;
		 \draw[step=1,thin] (v13.center) -- (v5.center) -- (v3.center) -- (v12.center) -- cycle;
		 \draw[step=1,thin] (v17.center) -- (v14.center) -- (v7.center) -- (v5.center) -- (v13.center) -- cycle;
		 \draw[step=1,thin] (v15.center) -- (v2.center) -- (v7.center) -- (v14.center) -- cycle;
		 \draw[step=1,thin] (v10.center) -- (v19.center) -- (v22.center) -- (v18.center) -- (v16.center) -- cycle;
		 \draw[step=1,thin] (v22.center) -- (v26.center) -- (v25.center) -- (v18.center) -- cycle;
		 \draw[step=1,thin] (v23.center) -- (v21.center) -- (v14.center) -- (v17.center) -- (v20.center) -- cycle;
		 \draw[step=1,thin] (v19.center) -- (v21.center) -- (v23.center) -- (v26.center) -- (v22.center) -- cycle;
		 \draw[step=1,thin] (v25.center) -- (v20.center) -- (v17.center) -- (v13.center) -- (v24.center) -- cycle;
		 \draw[step=1,thin] (v26.center) -- (v23.center) -- (v20.center) -- (v25.center) -- cycle;

		\node[scale=0.3,draw,black,thin,fill=white,circle,minimum size=2pt,inner sep=0] at (v0) {$\vertex_{0}$};
		\node[scale=0.3,draw,black,thin,fill=white,circle,minimum size=2pt,inner sep=0] at (v1) {$\vertex_{1}$};
		\node[scale=0.3,draw,black,thin,fill=white,circle,minimum size=2pt,inner sep=0] at (v2) {$\vertex_{2}$};
		\node[scale=0.3,draw,black,thin,fill=white,circle,minimum size=2pt,inner sep=0] at (v3) {$\vertex_{3}$};
		\node[scale=0.3,draw,black,thin,fill=white,circle,minimum size=2pt,inner sep=0] at (v4) {$\vertex_{4}$};
		\node[scale=0.3,draw,black,thin,fill=white,circle,minimum size=2pt,inner sep=0] at (v5) {$\vertex_{5}$};
		\node[scale=0.3,draw,black,thin,fill=white,circle,minimum size=2pt,inner sep=0] at (v6) {$\vertex_{6}$};
		\node[scale=0.3,draw,black,thin,fill=white,circle,minimum size=2pt,inner sep=0] at (v7) {$\vertex_{7}$};
		\node[scale=0.3,draw,black,thin,fill=white,circle,minimum size=2pt,inner sep=0] at (v8) {$\vertex_{8}$};
		\node[scale=0.3,draw,black,thin,fill=white,circle,minimum size=2pt,inner sep=0] at (v9) {$\vertex_{9}$};
		\node[scale=0.3,draw,black,thin,fill=white,circle,minimum size=2pt,inner sep=0] at (v10) {$\vertex_{10}$};
		\node[scale=0.3,draw,black,thin,fill=white,circle,minimum size=2pt,inner sep=0] at (v11) {$\vertex_{11}$};
		\node[scale=0.3,draw,black,thin,fill=white,circle,minimum size=2pt,inner sep=0] at (v12) {$\vertex_{12}$};
		\node[scale=0.3,draw,black,thin,fill=white,circle,minimum size=2pt,inner sep=0] at (v13) {$\vertex_{13}$};
		\node[scale=0.3,draw,black,thin,fill=white,circle,minimum size=2pt,inner sep=0] at (v14) {$\vertex_{14}$};
		\node[scale=0.3,draw,black,thin,fill=white,circle,minimum size=2pt,inner sep=0] at (v15) {$\vertex_{15}$};
		\node[scale=0.3,draw,black,thin,fill=white,circle,minimum size=2pt,inner sep=0] at (v16) {$\vertex_{16}$};
		\node[scale=0.3,draw,black,thin,fill=white,circle,minimum size=2pt,inner sep=0] at (v17) {$\vertex_{17}$};
		\node[scale=0.3,draw,black,thin,fill=white,circle,minimum size=2pt,inner sep=0] at (v18) {$\vertex_{18}$};
		\node[scale=0.3,draw,black,thin,fill=white,circle,minimum size=2pt,inner sep=0] at (v19) {$\vertex_{19}$};
		\node[scale=0.3,draw,black,thin,fill=white,circle,minimum size=2pt,inner sep=0] at (v20) {$\vertex_{20}$};
		\node[scale=0.3,draw,black,thin,fill=white,circle,minimum size=2pt,inner sep=0] at (v21) {$\vertex_{21}$};
		\node[scale=0.3,draw,black,thin,fill=white,circle,minimum size=2pt,inner sep=0] at (v22) {$\vertex_{22}$};
		\node[scale=0.3,draw,black,thin,fill=white,circle,minimum size=2pt,inner sep=0] at (v23) {$\vertex_{23}$};
		\node[scale=0.3,draw,black,thin,fill=white,circle,minimum size=2pt,inner sep=0] at (v24) {$\vertex_{24}$};
		\node[scale=0.3,draw,black,thin,fill=white,circle,minimum size=2pt,inner sep=0] at (v25) {$\vertex_{25}$};
		\node[scale=0.3,draw,black,thin,fill=white,circle,minimum size=2pt,inner sep=0] at (v26) {$\vertex_{26}$};

		\node[scale=0.3,below,inner sep=0] at ($(v0)!0.5!(v4)$) {$\edge_{0}(-1)$};
		\node[scale=0.3,left,inner sep=0] at ($(v1)!0.5!(v11)$) {$\edge_{1}(-1)$};
		\node[scale=0.3,below,inner sep=0] at ($(v3)!0.5!(v5)$) {$\edge_{2}(-1)$};
		\node[scale=0.3,left,inner sep=0] at ($(v0)!0.5!(v8)$) {$\edge_{3}(-1)$};
		\node[scale=0.3,below,inner sep=0] at ($(v4)!0.5!(v6)$) {$\edge_{4}(-1)$};
		\node[scale=0.3,below,inner sep=0] at ($(v5)!0.5!(v7)$) {$\edge_{5}(-1)$};
		\node[scale=0.3,left,inner sep=0] at ($(v4)!0.5!(v9)$) {$\edge_{6}(2)$};
		\node[scale=0.3,below,inner sep=0] at ($(v6)!0.5!(v1)$) {$\edge_{7}(-1)$};
		\node[scale=0.3,below,inner sep=0] at ($(v7)!0.5!(v2)$) {$\edge_{8}(-1)$};
		\node[scale=0.3,left,inner sep=0] at ($(v6)!0.5!(v10)$) {$\edge_{9}(1)$};
		\node[scale=0.3,left,inner sep=0] at ($(v8)!0.5!(v12)$) {$\edge_{10}(-1)$};
		\node[scale=0.3,left,inner sep=0] at ($(v9)!0.5!(v24)$) {$\edge_{11}(2)$};
		\node[scale=0.3,below,inner sep=0] at ($(v8)!0.5!(v9)$) {$\edge_{12}(0)$};
		\node[scale=0.3,left,inner sep=0] at ($(v10)!0.5!(v19)$) {$\edge_{13}(1)$};
		\node[scale=0.3,below,inner sep=0] at ($(v9)!0.5!(v16)$) {$\edge_{14}(0)$};
		\node[scale=0.3,left,inner sep=0] at ($(v11)!0.5!(v15)$) {$\edge_{15}(-1)$};
		\node[scale=0.3,below,inner sep=0] at ($(v10)!0.5!(v11)$) {$\edge_{16}(0)$};
		\node[scale=0.3,left,inner sep=0] at ($(v12)!0.5!(v3)$) {$\edge_{17}(-1)$};
		\node[scale=0.3,left,inner sep=0] at ($(v13)!0.5!(v5)$) {$\edge_{18}(2)$};
		\node[scale=0.3,below,inner sep=0] at ($(v12)!0.5!(v13)$) {$\edge_{19}(2)$};
		\node[scale=0.3,left,inner sep=0] at ($(v14)!0.5!(v7)$) {$\edge_{20}(1)$};
		\node[scale=0.3,below,inner sep=0] at ($(v13)!0.5!(v17)$) {$\edge_{21}(2)$};
		\node[scale=0.3,left,inner sep=0] at ($(v15)!0.5!(v2)$) {$\edge_{22}(-1)$};
		\node[scale=0.3,below,inner sep=0] at ($(v14)!0.5!(v15)$) {$\edge_{23}(2)$};
		\node[scale=0.3,below,inner sep=0] at ($(v16)!0.5!(v10)$) {$\edge_{24}(0)$};
		\node[scale=0.3,below,inner sep=0] at ($(v17)!0.5!(v14)$) {$\edge_{25}(2)$};
		\node[scale=0.3,left,inner sep=0] at ($(v16)!0.5!(v18)$) {$\edge_{26}(1)$};
		\node[scale=0.3,left,inner sep=0] at ($(v18)!0.5!(v25)$) {$\color{blue}\edge_{27}(0)$};
		\node[scale=0.3,left,inner sep=0] at ($(v19)!0.5!(v21)$) {$\edge_{28}(1)$};
		\node[scale=0.3,below,inner sep=0] at ($(v18)!0.5!(v22)$) {$\edge_{29}(0)$};
		\node[scale=0.3,left,inner sep=0] at ($(v20)!0.5!(v17)$) {$\edge_{30}(1)$};
		\node[scale=0.3,left,inner sep=0] at ($(v21)!0.5!(v14)$) {$\edge_{31}(1)$};
		\node[scale=0.3,below,inner sep=0] at ($(v20)!0.5!(v23)$) {$\edge_{32}(2)$};
		\node[scale=0.3,below,inner sep=0] at ($(v22)!0.5!(v19)$) {$\edge_{33}(0)$};
		\node[scale=0.3,below,inner sep=0] at ($(v23)!0.5!(v21)$) {$\edge_{34}(2)$};
		\node[scale=0.3,left,inner sep=0] at ($(v22)!0.5!(v26)$) {$\edge_{35}(2)$};
		\node[scale=0.3,left,inner sep=0] at ($(v24)!0.5!(v13)$) {$\edge_{36}(2)$};
		\node[scale=0.3,left,inner sep=0] at ($(v25)!0.5!(v20)$) {$\color{blue}\edge_{37}(0)$};
		\node[scale=0.3,below,inner sep=0] at ($(v24)!0.5!(v25)$) {$\edge_{38}(2)$};
		\node[scale=0.3,left,inner sep=0] at ($(v26)!0.5!(v23)$) {$\edge_{39}(2)$};
		\node[scale=0.3,below,inner sep=0] at ($(v25)!0.5!(v26)$) {$\edge_{40}(2)$};

	 \end{scope}
\end{tikzpicture}

}\\
	\subcaptionbox{$\splSpace^{\bsmoothr_1}$}[0.49\textwidth]{%
	\tikzsetnextfilename{./tikz/images/mixed_reg_ex1c}%
%
%

\begin{tikzpicture}[scale=1.7, transform shape]
	 \begin{scope}
		\coordinate (v0) at (0.0000000000000000,0.0000000000000000) {};
		\coordinate (v1) at (4.0000000000000000,0.0000000000000000) {};
		\coordinate (v2) at (4.0000000000000000,4.0000000000000000) {};
		\coordinate (v3) at (0.0000000000000000,4.0000000000000000) {};
		\coordinate (v4) at (1.0000000000000000,0.0000000000000000) {};
		\coordinate (v5) at (1.0000000000000000,4.0000000000000000) {};
		\coordinate (v6) at (3.0000000000000000,0.0000000000000000) {};
		\coordinate (v7) at (3.0000000000000000,4.0000000000000000) {};
		\coordinate (v8) at (0.0000000000000000,1.0000000000000000) {};
		\coordinate (v9) at (1.0000000000000000,1.0000000000000000) {};
		\coordinate (v10) at (3.0000000000000000,1.0000000000000000) {};
		\coordinate (v11) at (4.0000000000000000,1.0000000000000000) {};
		\coordinate (v12) at (0.0000000000000000,3.0000000000000000) {};
		\coordinate (v13) at (1.0000000000000000,3.0000000000000000) {};
		\coordinate (v14) at (3.0000000000000000,3.0000000000000000) {};
		\coordinate (v15) at (4.0000000000000000,3.0000000000000000) {};
		\coordinate (v16) at (2.0000000000000000,1.0000000000000000) {};
		\coordinate (v17) at (2.0000000000000000,3.0000000000000000) {};
		\coordinate (v18) at (2.0000000000000000,1.5000000000000000) {};
		\coordinate (v19) at (3.0000000000000000,1.5000000000000000) {};
		\coordinate (v20) at (2.0000000000000000,2.5000000000000000) {};
		\coordinate (v21) at (3.0000000000000000,2.5000000000000000) {};
		\coordinate (v22) at (2.5000000000000000,1.5000000000000000) {};
		\coordinate (v23) at (2.5000000000000000,2.5000000000000000) {};
		\coordinate (v24) at (1.0000000000000000,2.0000000000000000) {};
		\coordinate (v25) at (2.0000000000000000,2.0000000000000000) {};
		\coordinate (v26) at (2.5000000000000000,2.0000000000000000) {};

		 \draw[step=1,thin] (v4.center) -- (v9.center) -- (v8.center) -- (v0.center) -- cycle;
		 \draw[step=1,thin] (v6.center) -- (v10.center) -- (v16.center) -- (v9.center) -- (v4.center) -- cycle;
		 \draw[step=1,thin] (v1.center) -- (v11.center) -- (v10.center) -- (v6.center) -- cycle;
		 \draw[step=1,thin] (v9.center) -- (v24.center) -- (v13.center) -- (v12.center) -- (v8.center) -- cycle;
		 \draw[step=1,thin] (v16.center) -- (v18.center) -- (v25.center) -- (v24.center) -- (v9.center) -- cycle;
		 \draw[step=1,thin] (v11.center) -- (v15.center) -- (v14.center) -- (v21.center) -- (v19.center) -- (v10.center) -- cycle;
		 \draw[step=1,thin] (v13.center) -- (v5.center) -- (v3.center) -- (v12.center) -- cycle;
		 \draw[step=1,thin] (v17.center) -- (v14.center) -- (v7.center) -- (v5.center) -- (v13.center) -- cycle;
		 \draw[step=1,thin] (v15.center) -- (v2.center) -- (v7.center) -- (v14.center) -- cycle;
		 \draw[step=1,thin] (v10.center) -- (v19.center) -- (v22.center) -- (v18.center) -- (v16.center) -- cycle;
		 \draw[step=1,thin] (v22.center) -- (v26.center) -- (v25.center) -- (v18.center) -- cycle;
		 \draw[step=1,thin] (v23.center) -- (v21.center) -- (v14.center) -- (v17.center) -- (v20.center) -- cycle;
		 \draw[step=1,thin] (v19.center) -- (v21.center) -- (v23.center) -- (v26.center) -- (v22.center) -- cycle;
		 \draw[step=1,thin] (v25.center) -- (v20.center) -- (v17.center) -- (v13.center) -- (v24.center) -- cycle;
		 \draw[step=1,thin] (v26.center) -- (v23.center) -- (v20.center) -- (v25.center) -- cycle;

		\node[scale=0.3,draw,black,thin,fill=white,circle,minimum size=2pt,inner sep=0] at (v0) {$\vertex_{0}$};
		\node[scale=0.3,draw,black,thin,fill=white,circle,minimum size=2pt,inner sep=0] at (v1) {$\vertex_{1}$};
		\node[scale=0.3,draw,black,thin,fill=white,circle,minimum size=2pt,inner sep=0] at (v2) {$\vertex_{2}$};
		\node[scale=0.3,draw,black,thin,fill=white,circle,minimum size=2pt,inner sep=0] at (v3) {$\vertex_{3}$};
		\node[scale=0.3,draw,black,thin,fill=white,circle,minimum size=2pt,inner sep=0] at (v4) {$\vertex_{4}$};
		\node[scale=0.3,draw,black,thin,fill=white,circle,minimum size=2pt,inner sep=0] at (v5) {$\vertex_{5}$};
		\node[scale=0.3,draw,black,thin,fill=white,circle,minimum size=2pt,inner sep=0] at (v6) {$\vertex_{6}$};
		\node[scale=0.3,draw,black,thin,fill=white,circle,minimum size=2pt,inner sep=0] at (v7) {$\vertex_{7}$};
		\node[scale=0.3,draw,black,thin,fill=white,circle,minimum size=2pt,inner sep=0] at (v8) {$\vertex_{8}$};
		\node[scale=0.3,draw,black,thin,fill=white,circle,minimum size=2pt,inner sep=0] at (v9) {$\vertex_{9}$};
		\node[scale=0.3,draw,black,thin,fill=white,circle,minimum size=2pt,inner sep=0] at (v10) {$\vertex_{10}$};
		\node[scale=0.3,draw,black,thin,fill=white,circle,minimum size=2pt,inner sep=0] at (v11) {$\vertex_{11}$};
		\node[scale=0.3,draw,black,thin,fill=white,circle,minimum size=2pt,inner sep=0] at (v12) {$\vertex_{12}$};
		\node[scale=0.3,draw,black,thin,fill=white,circle,minimum size=2pt,inner sep=0] at (v13) {$\vertex_{13}$};
		\node[scale=0.3,draw,black,thin,fill=white,circle,minimum size=2pt,inner sep=0] at (v14) {$\vertex_{14}$};
		\node[scale=0.3,draw,black,thin,fill=white,circle,minimum size=2pt,inner sep=0] at (v15) {$\vertex_{15}$};
		\node[scale=0.3,draw,black,thin,fill=white,circle,minimum size=2pt,inner sep=0] at (v16) {$\vertex_{16}$};
		\node[scale=0.3,draw,black,thin,fill=white,circle,minimum size=2pt,inner sep=0] at (v17) {$\vertex_{17}$};
		\node[scale=0.3,draw,black,thin,fill=white,circle,minimum size=2pt,inner sep=0] at (v18) {$\vertex_{18}$};
		\node[scale=0.3,draw,black,thin,fill=white,circle,minimum size=2pt,inner sep=0] at (v19) {$\vertex_{19}$};
		\node[scale=0.3,draw,black,thin,fill=white,circle,minimum size=2pt,inner sep=0] at (v20) {$\vertex_{20}$};
		\node[scale=0.3,draw,black,thin,fill=white,circle,minimum size=2pt,inner sep=0] at (v21) {$\vertex_{21}$};
		\node[scale=0.3,draw,black,thin,fill=white,circle,minimum size=2pt,inner sep=0] at (v22) {$\vertex_{22}$};
		\node[scale=0.3,draw,black,thin,fill=white,circle,minimum size=2pt,inner sep=0] at (v23) {$\vertex_{23}$};
		\node[scale=0.3,draw,black,thin,fill=white,circle,minimum size=2pt,inner sep=0] at (v24) {$\vertex_{24}$};
		\node[scale=0.3,draw,black,thin,fill=white,circle,minimum size=2pt,inner sep=0] at (v25) {$\vertex_{25}$};
		\node[scale=0.3,draw,black,thin,fill=white,circle,minimum size=2pt,inner sep=0] at (v26) {$\vertex_{26}$};

		\node[scale=0.3,below,inner sep=0] at ($(v0)!0.5!(v4)$) {$\edge_{0}(-1)$};
		\node[scale=0.3,left,inner sep=0] at ($(v1)!0.5!(v11)$) {$\edge_{1}(-1)$};
		\node[scale=0.3,below,inner sep=0] at ($(v3)!0.5!(v5)$) {$\edge_{2}(-1)$};
		\node[scale=0.3,left,inner sep=0] at ($(v0)!0.5!(v8)$) {$\edge_{3}(-1)$};
		\node[scale=0.3,below,inner sep=0] at ($(v4)!0.5!(v6)$) {$\edge_{4}(-1)$};
		\node[scale=0.3,below,inner sep=0] at ($(v5)!0.5!(v7)$) {$\edge_{5}(-1)$};
		\node[scale=0.3,left,inner sep=0] at ($(v4)!0.5!(v9)$) {$\edge_{6}(2)$};
		\node[scale=0.3,below,inner sep=0] at ($(v6)!0.5!(v1)$) {$\edge_{7}(-1)$};
		\node[scale=0.3,below,inner sep=0] at ($(v7)!0.5!(v2)$) {$\edge_{8}(-1)$};
		\node[scale=0.3,left,inner sep=0] at ($(v6)!0.5!(v10)$) {$\edge_{9}(1)$};
		\node[scale=0.3,left,inner sep=0] at ($(v8)!0.5!(v12)$) {$\edge_{10}(-1)$};
		\node[scale=0.3,left,inner sep=0] at ($(v9)!0.5!(v24)$) {$\edge_{11}(2)$};
		\node[scale=0.3,below,inner sep=0] at ($(v8)!0.5!(v9)$) {$\edge_{12}(0)$};
		\node[scale=0.3,left,inner sep=0] at ($(v10)!0.5!(v19)$) {$\edge_{13}(1)$};
		\node[scale=0.3,below,inner sep=0] at ($(v9)!0.5!(v16)$) {$\edge_{14}(0)$};
		\node[scale=0.3,left,inner sep=0] at ($(v11)!0.5!(v15)$) {$\edge_{15}(-1)$};
		\node[scale=0.3,below,inner sep=0] at ($(v10)!0.5!(v11)$) {$\edge_{16}(0)$};
		\node[scale=0.3,left,inner sep=0] at ($(v12)!0.5!(v3)$) {$\edge_{17}(-1)$};
		\node[scale=0.3,left,inner sep=0] at ($(v13)!0.5!(v5)$) {$\edge_{18}(2)$};
		\node[scale=0.3,below,inner sep=0] at ($(v12)!0.5!(v13)$) {$\edge_{19}(2)$};
		\node[scale=0.3,left,inner sep=0] at ($(v14)!0.5!(v7)$) {$\edge_{20}(1)$};
		\node[scale=0.3,below,inner sep=0] at ($(v13)!0.5!(v17)$) {$\edge_{21}(2)$};
		\node[scale=0.3,left,inner sep=0] at ($(v15)!0.5!(v2)$) {$\edge_{22}(-1)$};
		\node[scale=0.3,below,inner sep=0] at ($(v14)!0.5!(v15)$) {$\edge_{23}(2)$};
		\node[scale=0.3,below,inner sep=0] at ($(v16)!0.5!(v10)$) {$\edge_{24}(0)$};
		\node[scale=0.3,below,inner sep=0] at ($(v17)!0.5!(v14)$) {$\edge_{25}(2)$};
		\node[scale=0.3,left,inner sep=0] at ($(v16)!0.5!(v18)$) {$\edge_{26}(1)$};
		\node[scale=0.3,left,inner sep=0] at ($(v18)!0.5!(v25)$) {$\color{blue}\edge_{27}(0)$};
		\node[scale=0.3,left,inner sep=0] at ($(v19)!0.5!(v21)$) {$\edge_{28}(1)$};
		\node[scale=0.3,below,inner sep=0] at ($(v18)!0.5!(v22)$) {$\edge_{29}(0)$};
		\node[scale=0.3,left,inner sep=0] at ($(v20)!0.5!(v17)$) {$\color{blue}\edge_{30}(0)$};
		\node[scale=0.3,left,inner sep=0] at ($(v21)!0.5!(v14)$) {$\edge_{31}(1)$};
		\node[scale=0.3,below,inner sep=0] at ($(v20)!0.5!(v23)$) {$\edge_{32}(2)$};
		\node[scale=0.3,below,inner sep=0] at ($(v22)!0.5!(v19)$) {$\edge_{33}(0)$};
		\node[scale=0.3,below,inner sep=0] at ($(v23)!0.5!(v21)$) {$\edge_{34}(2)$};
		\node[scale=0.3,left,inner sep=0] at ($(v22)!0.5!(v26)$) {$\edge_{35}(2)$};
		\node[scale=0.3,left,inner sep=0] at ($(v24)!0.5!(v13)$) {$\edge_{36}(2)$};
		\node[scale=0.3,left,inner sep=0] at ($(v25)!0.5!(v20)$) {$\color{blue}\edge_{37}(0)$};
		\node[scale=0.3,below,inner sep=0] at ($(v24)!0.5!(v25)$) {$\edge_{38}(2)$};
		\node[scale=0.3,left,inner sep=0] at ($(v26)!0.5!(v23)$) {$\edge_{39}(2)$};
		\node[scale=0.3,below,inner sep=0] at ($(v25)!0.5!(v26)$) {$\edge_{40}(2)$};

	 \end{scope}
\end{tikzpicture}

}
	\subcaptionbox{$\splSpace^{\bsmoothr_2}$}[0.49\textwidth]{%
	\tikzsetnextfilename{./tikz/images/mixed_reg_ex1d}%
%
%
	
	\begin{tikzpicture}[scale=1.7, transform shape]
	\begin{scope}
	\coordinate (v0) at (0.0000000000000000,0.0000000000000000) {};
	\coordinate (v1) at (4.0000000000000000,0.0000000000000000) {};
	\coordinate (v2) at (4.0000000000000000,4.0000000000000000) {};
	\coordinate (v3) at (0.0000000000000000,4.0000000000000000) {};
	\coordinate (v4) at (1.0000000000000000,0.0000000000000000) {};
	\coordinate (v5) at (1.0000000000000000,4.0000000000000000) {};
	\coordinate (v6) at (3.0000000000000000,0.0000000000000000) {};
	\coordinate (v7) at (3.0000000000000000,4.0000000000000000) {};
	\coordinate (v8) at (0.0000000000000000,1.0000000000000000) {};
	\coordinate (v9) at (1.0000000000000000,1.0000000000000000) {};
	\coordinate (v10) at (3.0000000000000000,1.0000000000000000) {};
	\coordinate (v11) at (4.0000000000000000,1.0000000000000000) {};
	\coordinate (v12) at (0.0000000000000000,3.0000000000000000) {};
	\coordinate (v13) at (1.0000000000000000,3.0000000000000000) {};
	\coordinate (v14) at (3.0000000000000000,3.0000000000000000) {};
	\coordinate (v15) at (4.0000000000000000,3.0000000000000000) {};
	\coordinate (v16) at (2.0000000000000000,1.0000000000000000) {};
	\coordinate (v17) at (2.0000000000000000,3.0000000000000000) {};
	\coordinate (v18) at (2.0000000000000000,1.5000000000000000) {};
	\coordinate (v19) at (3.0000000000000000,1.5000000000000000) {};
	\coordinate (v20) at (2.0000000000000000,2.5000000000000000) {};
	\coordinate (v21) at (3.0000000000000000,2.5000000000000000) {};
	\coordinate (v22) at (2.5000000000000000,1.5000000000000000) {};
	\coordinate (v23) at (2.5000000000000000,2.5000000000000000) {};
	\coordinate (v24) at (1.0000000000000000,2.0000000000000000) {};
	\coordinate (v25) at (2.0000000000000000,2.0000000000000000) {};
	\coordinate (v26) at (2.5000000000000000,2.0000000000000000) {};
	
	\draw[step=1,thin] (v4.center) -- (v9.center) -- (v8.center) -- (v0.center) -- cycle;
	\draw[step=1,thin] (v6.center) -- (v10.center) -- (v16.center) -- (v9.center) -- (v4.center) -- cycle;
	\draw[step=1,thin] (v1.center) -- (v11.center) -- (v10.center) -- (v6.center) -- cycle;
	\draw[step=1,thin] (v9.center) -- (v24.center) -- (v13.center) -- (v12.center) -- (v8.center) -- cycle;
	\draw[step=1,thin] (v16.center) -- (v18.center) -- (v25.center) -- (v24.center) -- (v9.center) -- cycle;
	\draw[step=1,thin] (v11.center) -- (v15.center) -- (v14.center) -- (v21.center) -- (v19.center) -- (v10.center) -- cycle;
	\draw[step=1,thin] (v13.center) -- (v5.center) -- (v3.center) -- (v12.center) -- cycle;
	\draw[step=1,thin] (v17.center) -- (v14.center) -- (v7.center) -- (v5.center) -- (v13.center) -- cycle;
	\draw[step=1,thin] (v15.center) -- (v2.center) -- (v7.center) -- (v14.center) -- cycle;
	\draw[step=1,thin] (v10.center) -- (v19.center) -- (v22.center) -- (v18.center) -- (v16.center) -- cycle;
	\draw[step=1,thin] (v22.center) -- (v26.center) -- (v25.center) -- (v18.center) -- cycle;
	\draw[step=1,thin] (v23.center) -- (v21.center) -- (v14.center) -- (v17.center) -- (v20.center) -- cycle;
	\draw[step=1,thin] (v19.center) -- (v21.center) -- (v23.center) -- (v26.center) -- (v22.center) -- cycle;
	\draw[step=1,thin] (v25.center) -- (v20.center) -- (v17.center) -- (v13.center) -- (v24.center) -- cycle;
	\draw[step=1,thin] (v26.center) -- (v23.center) -- (v20.center) -- (v25.center) -- cycle;
	
	\node[scale=0.3,draw,black,thin,fill=white,circle,minimum size=2pt,inner sep=0] at (v0) {$\vertex_{0}$};
	\node[scale=0.3,draw,black,thin,fill=white,circle,minimum size=2pt,inner sep=0] at (v1) {$\vertex_{1}$};
	\node[scale=0.3,draw,black,thin,fill=white,circle,minimum size=2pt,inner sep=0] at (v2) {$\vertex_{2}$};
	\node[scale=0.3,draw,black,thin,fill=white,circle,minimum size=2pt,inner sep=0] at (v3) {$\vertex_{3}$};
	\node[scale=0.3,draw,black,thin,fill=white,circle,minimum size=2pt,inner sep=0] at (v4) {$\vertex_{4}$};
	\node[scale=0.3,draw,black,thin,fill=white,circle,minimum size=2pt,inner sep=0] at (v5) {$\vertex_{5}$};
	\node[scale=0.3,draw,black,thin,fill=white,circle,minimum size=2pt,inner sep=0] at (v6) {$\vertex_{6}$};
	\node[scale=0.3,draw,black,thin,fill=white,circle,minimum size=2pt,inner sep=0] at (v7) {$\vertex_{7}$};
	\node[scale=0.3,draw,black,thin,fill=white,circle,minimum size=2pt,inner sep=0] at (v8) {$\vertex_{8}$};
	\node[scale=0.3,draw,black,thin,fill=white,circle,minimum size=2pt,inner sep=0] at (v9) {$\vertex_{9}$};
	\node[scale=0.3,draw,black,thin,fill=white,circle,minimum size=2pt,inner sep=0] at (v10) {$\vertex_{10}$};
	\node[scale=0.3,draw,black,thin,fill=white,circle,minimum size=2pt,inner sep=0] at (v11) {$\vertex_{11}$};
	\node[scale=0.3,draw,black,thin,fill=white,circle,minimum size=2pt,inner sep=0] at (v12) {$\vertex_{12}$};
	\node[scale=0.3,draw,black,thin,fill=white,circle,minimum size=2pt,inner sep=0] at (v13) {$\vertex_{13}$};
	\node[scale=0.3,draw,black,thin,fill=white,circle,minimum size=2pt,inner sep=0] at (v14) {$\vertex_{14}$};
	\node[scale=0.3,draw,black,thin,fill=white,circle,minimum size=2pt,inner sep=0] at (v15) {$\vertex_{15}$};
	\node[scale=0.3,draw,black,thin,fill=white,circle,minimum size=2pt,inner sep=0] at (v16) {$\vertex_{16}$};
	\node[scale=0.3,draw,black,thin,fill=white,circle,minimum size=2pt,inner sep=0] at (v17) {$\vertex_{17}$};
	\node[scale=0.3,draw,black,thin,fill=white,circle,minimum size=2pt,inner sep=0] at (v18) {$\vertex_{18}$};
	\node[scale=0.3,draw,black,thin,fill=white,circle,minimum size=2pt,inner sep=0] at (v19) {$\vertex_{19}$};
	\node[scale=0.3,draw,black,thin,fill=white,circle,minimum size=2pt,inner sep=0] at (v20) {$\vertex_{20}$};
	\node[scale=0.3,draw,black,thin,fill=white,circle,minimum size=2pt,inner sep=0] at (v21) {$\vertex_{21}$};
	\node[scale=0.3,draw,black,thin,fill=white,circle,minimum size=2pt,inner sep=0] at (v22) {$\vertex_{22}$};
	\node[scale=0.3,draw,black,thin,fill=white,circle,minimum size=2pt,inner sep=0] at (v23) {$\vertex_{23}$};
	\node[scale=0.3,draw,black,thin,fill=white,circle,minimum size=2pt,inner sep=0] at (v24) {$\vertex_{24}$};
	\node[scale=0.3,draw,black,thin,fill=white,circle,minimum size=2pt,inner sep=0] at (v25) {$\vertex_{25}$};
	\node[scale=0.3,draw,black,thin,fill=white,circle,minimum size=2pt,inner sep=0] at (v26) {$\vertex_{26}$};
	
	\node[scale=0.3,below,inner sep=0] at ($(v0)!0.5!(v4)$) {$\edge_{0}(-1)$};
	\node[scale=0.3,left,inner sep=0] at ($(v1)!0.5!(v11)$) {$\edge_{1}(-1)$};
	\node[scale=0.3,below,inner sep=0] at ($(v3)!0.5!(v5)$) {$\edge_{2}(-1)$};
	\node[scale=0.3,left,inner sep=0] at ($(v0)!0.5!(v8)$) {$\edge_{3}(-1)$};
	\node[scale=0.3,below,inner sep=0] at ($(v4)!0.5!(v6)$) {$\edge_{4}(-1)$};
	\node[scale=0.3,below,inner sep=0] at ($(v5)!0.5!(v7)$) {$\edge_{5}(-1)$};
	\node[scale=0.3,left,inner sep=0] at ($(v4)!0.5!(v9)$) {$\edge_{6}(2)$};
	\node[scale=0.3,below,inner sep=0] at ($(v6)!0.5!(v1)$) {$\edge_{7}(-1)$};
	\node[scale=0.3,below,inner sep=0] at ($(v7)!0.5!(v2)$) {$\edge_{8}(-1)$};
	\node[scale=0.3,left,inner sep=0] at ($(v6)!0.5!(v10)$) {$\edge_{9}(1)$};
	\node[scale=0.3,left,inner sep=0] at ($(v8)!0.5!(v12)$) {$\edge_{10}(-1)$};
	\node[scale=0.3,left,inner sep=0] at ($(v9)!0.5!(v24)$) {$\edge_{11}(2)$};
	\node[scale=0.3,below,inner sep=0] at ($(v8)!0.5!(v9)$) {$\edge_{12}(0)$};
	\node[scale=0.3,left,inner sep=0] at ($(v10)!0.5!(v19)$) {$\edge_{13}(1)$};
	\node[scale=0.3,below,inner sep=0] at ($(v9)!0.5!(v16)$) {$\edge_{14}(0)$};
	\node[scale=0.3,left,inner sep=0] at ($(v11)!0.5!(v15)$) {$\edge_{15}(-1)$};
	\node[scale=0.3,below,inner sep=0] at ($(v10)!0.5!(v11)$) {$\edge_{16}(0)$};
	\node[scale=0.3,left,inner sep=0] at ($(v12)!0.5!(v3)$) {$\edge_{17}(-1)$};
	\node[scale=0.3,left,inner sep=0] at ($(v13)!0.5!(v5)$) {$\edge_{18}(2)$};
	\node[scale=0.3,below,inner sep=0] at ($(v12)!0.5!(v13)$) {$\edge_{19}(2)$};
	\node[scale=0.3,left,inner sep=0] at ($(v14)!0.5!(v7)$) {$\edge_{20}(1)$};
	\node[scale=0.3,below,inner sep=0] at ($(v13)!0.5!(v17)$) {$\edge_{21}(2)$};
	\node[scale=0.3,left,inner sep=0] at ($(v15)!0.5!(v2)$) {$\edge_{22}(-1)$};
	\node[scale=0.3,below,inner sep=0] at ($(v14)!0.5!(v15)$) {$\edge_{23}(2)$};
	\node[scale=0.3,below,inner sep=0] at ($(v16)!0.5!(v10)$) {$\edge_{24}(0)$};
	\node[scale=0.3,below,inner sep=0] at ($(v17)!0.5!(v14)$) {$\edge_{25}(2)$};
	\node[scale=0.3,left,inner sep=0] at ($(v16)!0.5!(v18)$) {$\edge_{26}(1)$};
	\node[scale=0.3,left,inner sep=0] at ($(v18)!0.5!(v25)$) {$\color{blue}\edge_{27}(0)$};
	\node[scale=0.3,left,inner sep=0] at ($(v19)!0.5!(v21)$) {$\edge_{28}(1)$};
	\node[scale=0.3,below,inner sep=0] at ($(v18)!0.5!(v22)$) {$\edge_{29}(0)$};
	\node[scale=0.3,left,inner sep=0] at ($(v20)!0.5!(v17)$) {$\color{blue}\edge_{30}(0)$};
	\node[scale=0.3,left,inner sep=0] at ($(v21)!0.5!(v14)$) {$\edge_{31}(1)$};
	\node[scale=0.3,below,inner sep=0] at ($(v20)!0.5!(v23)$) {$\color{blue}\edge_{32}(1)$};
	\node[scale=0.3,below,inner sep=0] at ($(v22)!0.5!(v19)$) {$\edge_{33}(0)$};
	\node[scale=0.3,below,inner sep=0] at ($(v23)!0.5!(v21)$) {$\edge_{34}(2)$};
	\node[scale=0.3,left,inner sep=0] at ($(v22)!0.5!(v26)$) {$\edge_{35}(2)$};
	\node[scale=0.3,left,inner sep=0] at ($(v24)!0.5!(v13)$) {$\edge_{36}(2)$};
	\node[scale=0.3,left,inner sep=0] at ($(v25)!0.5!(v20)$) {$\color{blue}\edge_{37}(0)$};
	\node[scale=0.3,below,inner sep=0] at ($(v24)!0.5!(v25)$) {$\edge_{38}(2)$};
	\node[scale=0.3,left,inner sep=0] at ($(v26)!0.5!(v23)$) {$\edge_{39}(2)$};
	\node[scale=0.3,below,inner sep=0] at ($(v25)!0.5!(v26)$) {$\edge_{40}(2)$};
	
	\end{scope}
	\end{tikzpicture}
	
}
	\caption{The above figures correspond to the bi-cubic spline space considered in Example \ref{ex:ex2}.
		The smoothness required across each edge has been annotated in parenthesis next to the edge labels.
		The smoothness distributions in Figures (b)--(f) differ from the one in Figure (a) only on the edges labelled in blue.
	}
	\label{fig:ex2}
\end{figure}
\begin{figure*}\ContinuedFloat
	\subcaptionbox{$\splSpace^{\bsmoothr_3}$}[0.49\textwidth]{%
	\tikzsetnextfilename{./tikz/images/mixed_reg_ex1e}%
%
%

\begin{tikzpicture}[scale=1.7, transform shape]
	 \begin{scope}
		\coordinate (v0) at (0.0000000000000000,0.0000000000000000) {};
		\coordinate (v1) at (4.0000000000000000,0.0000000000000000) {};
		\coordinate (v2) at (4.0000000000000000,4.0000000000000000) {};
		\coordinate (v3) at (0.0000000000000000,4.0000000000000000) {};
		\coordinate (v4) at (1.0000000000000000,0.0000000000000000) {};
		\coordinate (v5) at (1.0000000000000000,4.0000000000000000) {};
		\coordinate (v6) at (3.0000000000000000,0.0000000000000000) {};
		\coordinate (v7) at (3.0000000000000000,4.0000000000000000) {};
		\coordinate (v8) at (0.0000000000000000,1.0000000000000000) {};
		\coordinate (v9) at (1.0000000000000000,1.0000000000000000) {};
		\coordinate (v10) at (3.0000000000000000,1.0000000000000000) {};
		\coordinate (v11) at (4.0000000000000000,1.0000000000000000) {};
		\coordinate (v12) at (0.0000000000000000,3.0000000000000000) {};
		\coordinate (v13) at (1.0000000000000000,3.0000000000000000) {};
		\coordinate (v14) at (3.0000000000000000,3.0000000000000000) {};
		\coordinate (v15) at (4.0000000000000000,3.0000000000000000) {};
		\coordinate (v16) at (2.0000000000000000,1.0000000000000000) {};
		\coordinate (v17) at (2.0000000000000000,3.0000000000000000) {};
		\coordinate (v18) at (2.0000000000000000,1.5000000000000000) {};
		\coordinate (v19) at (3.0000000000000000,1.5000000000000000) {};
		\coordinate (v20) at (2.0000000000000000,2.5000000000000000) {};
		\coordinate (v21) at (3.0000000000000000,2.5000000000000000) {};
		\coordinate (v22) at (2.5000000000000000,1.5000000000000000) {};
		\coordinate (v23) at (2.5000000000000000,2.5000000000000000) {};
		\coordinate (v24) at (1.0000000000000000,2.0000000000000000) {};
		\coordinate (v25) at (2.0000000000000000,2.0000000000000000) {};
		\coordinate (v26) at (2.5000000000000000,2.0000000000000000) {};

		 \draw[step=1,thin] (v4.center) -- (v9.center) -- (v8.center) -- (v0.center) -- cycle;
		 \draw[step=1,thin] (v6.center) -- (v10.center) -- (v16.center) -- (v9.center) -- (v4.center) -- cycle;
		 \draw[step=1,thin] (v1.center) -- (v11.center) -- (v10.center) -- (v6.center) -- cycle;
		 \draw[step=1,thin] (v9.center) -- (v24.center) -- (v13.center) -- (v12.center) -- (v8.center) -- cycle;
		 \draw[step=1,thin] (v16.center) -- (v18.center) -- (v25.center) -- (v24.center) -- (v9.center) -- cycle;
		 \draw[step=1,thin] (v11.center) -- (v15.center) -- (v14.center) -- (v21.center) -- (v19.center) -- (v10.center) -- cycle;
		 \draw[step=1,thin] (v13.center) -- (v5.center) -- (v3.center) -- (v12.center) -- cycle;
		 \draw[step=1,thin] (v17.center) -- (v14.center) -- (v7.center) -- (v5.center) -- (v13.center) -- cycle;
		 \draw[step=1,thin] (v15.center) -- (v2.center) -- (v7.center) -- (v14.center) -- cycle;
		 \draw[step=1,thin] (v10.center) -- (v19.center) -- (v22.center) -- (v18.center) -- (v16.center) -- cycle;
		 \draw[step=1,thin] (v22.center) -- (v26.center) -- (v25.center) -- (v18.center) -- cycle;
		 \draw[step=1,thin] (v23.center) -- (v21.center) -- (v14.center) -- (v17.center) -- (v20.center) -- cycle;
		 \draw[step=1,thin] (v19.center) -- (v21.center) -- (v23.center) -- (v26.center) -- (v22.center) -- cycle;
		 \draw[step=1,thin] (v25.center) -- (v20.center) -- (v17.center) -- (v13.center) -- (v24.center) -- cycle;
		 \draw[step=1,thin] (v26.center) -- (v23.center) -- (v20.center) -- (v25.center) -- cycle;

		\node[scale=0.3,draw,black,thin,fill=white,circle,minimum size=2pt,inner sep=0] at (v0) {$\vertex_{0}$};
		\node[scale=0.3,draw,black,thin,fill=white,circle,minimum size=2pt,inner sep=0] at (v1) {$\vertex_{1}$};
		\node[scale=0.3,draw,black,thin,fill=white,circle,minimum size=2pt,inner sep=0] at (v2) {$\vertex_{2}$};
		\node[scale=0.3,draw,black,thin,fill=white,circle,minimum size=2pt,inner sep=0] at (v3) {$\vertex_{3}$};
		\node[scale=0.3,draw,black,thin,fill=white,circle,minimum size=2pt,inner sep=0] at (v4) {$\vertex_{4}$};
		\node[scale=0.3,draw,black,thin,fill=white,circle,minimum size=2pt,inner sep=0] at (v5) {$\vertex_{5}$};
		\node[scale=0.3,draw,black,thin,fill=white,circle,minimum size=2pt,inner sep=0] at (v6) {$\vertex_{6}$};
		\node[scale=0.3,draw,black,thin,fill=white,circle,minimum size=2pt,inner sep=0] at (v7) {$\vertex_{7}$};
		\node[scale=0.3,draw,black,thin,fill=white,circle,minimum size=2pt,inner sep=0] at (v8) {$\vertex_{8}$};
		\node[scale=0.3,draw,black,thin,fill=white,circle,minimum size=2pt,inner sep=0] at (v9) {$\vertex_{9}$};
		\node[scale=0.3,draw,black,thin,fill=white,circle,minimum size=2pt,inner sep=0] at (v10) {$\vertex_{10}$};
		\node[scale=0.3,draw,black,thin,fill=white,circle,minimum size=2pt,inner sep=0] at (v11) {$\vertex_{11}$};
		\node[scale=0.3,draw,black,thin,fill=white,circle,minimum size=2pt,inner sep=0] at (v12) {$\vertex_{12}$};
		\node[scale=0.3,draw,black,thin,fill=white,circle,minimum size=2pt,inner sep=0] at (v13) {$\vertex_{13}$};
		\node[scale=0.3,draw,black,thin,fill=white,circle,minimum size=2pt,inner sep=0] at (v14) {$\vertex_{14}$};
		\node[scale=0.3,draw,black,thin,fill=white,circle,minimum size=2pt,inner sep=0] at (v15) {$\vertex_{15}$};
		\node[scale=0.3,draw,black,thin,fill=white,circle,minimum size=2pt,inner sep=0] at (v16) {$\vertex_{16}$};
		\node[scale=0.3,draw,black,thin,fill=white,circle,minimum size=2pt,inner sep=0] at (v17) {$\vertex_{17}$};
		\node[scale=0.3,draw,black,thin,fill=white,circle,minimum size=2pt,inner sep=0] at (v18) {$\vertex_{18}$};
		\node[scale=0.3,draw,black,thin,fill=white,circle,minimum size=2pt,inner sep=0] at (v19) {$\vertex_{19}$};
		\node[scale=0.3,draw,black,thin,fill=white,circle,minimum size=2pt,inner sep=0] at (v20) {$\vertex_{20}$};
		\node[scale=0.3,draw,black,thin,fill=white,circle,minimum size=2pt,inner sep=0] at (v21) {$\vertex_{21}$};
		\node[scale=0.3,draw,black,thin,fill=white,circle,minimum size=2pt,inner sep=0] at (v22) {$\vertex_{22}$};
		\node[scale=0.3,draw,black,thin,fill=white,circle,minimum size=2pt,inner sep=0] at (v23) {$\vertex_{23}$};
		\node[scale=0.3,draw,black,thin,fill=white,circle,minimum size=2pt,inner sep=0] at (v24) {$\vertex_{24}$};
		\node[scale=0.3,draw,black,thin,fill=white,circle,minimum size=2pt,inner sep=0] at (v25) {$\vertex_{25}$};
		\node[scale=0.3,draw,black,thin,fill=white,circle,minimum size=2pt,inner sep=0] at (v26) {$\vertex_{26}$};

		\node[scale=0.3,below,inner sep=0] at ($(v0)!0.5!(v4)$) {$\edge_{0}(-1)$};
		\node[scale=0.3,left,inner sep=0] at ($(v1)!0.5!(v11)$) {$\edge_{1}(-1)$};
		\node[scale=0.3,below,inner sep=0] at ($(v3)!0.5!(v5)$) {$\edge_{2}(-1)$};
		\node[scale=0.3,left,inner sep=0] at ($(v0)!0.5!(v8)$) {$\edge_{3}(-1)$};
		\node[scale=0.3,below,inner sep=0] at ($(v4)!0.5!(v6)$) {$\edge_{4}(-1)$};
		\node[scale=0.3,below,inner sep=0] at ($(v5)!0.5!(v7)$) {$\edge_{5}(-1)$};
		\node[scale=0.3,left,inner sep=0] at ($(v4)!0.5!(v9)$) {$\edge_{6}(2)$};
		\node[scale=0.3,below,inner sep=0] at ($(v6)!0.5!(v1)$) {$\edge_{7}(-1)$};
		\node[scale=0.3,below,inner sep=0] at ($(v7)!0.5!(v2)$) {$\edge_{8}(-1)$};
		\node[scale=0.3,left,inner sep=0] at ($(v6)!0.5!(v10)$) {$\edge_{9}(1)$};
		\node[scale=0.3,left,inner sep=0] at ($(v8)!0.5!(v12)$) {$\edge_{10}(-1)$};
		\node[scale=0.3,left,inner sep=0] at ($(v9)!0.5!(v24)$) {$\edge_{11}(2)$};
		\node[scale=0.3,below,inner sep=0] at ($(v8)!0.5!(v9)$) {$\edge_{12}(0)$};
		\node[scale=0.3,left,inner sep=0] at ($(v10)!0.5!(v19)$) {$\edge_{13}(1)$};
		\node[scale=0.3,below,inner sep=0] at ($(v9)!0.5!(v16)$) {$\edge_{14}(0)$};
		\node[scale=0.3,left,inner sep=0] at ($(v11)!0.5!(v15)$) {$\edge_{15}(-1)$};
		\node[scale=0.3,below,inner sep=0] at ($(v10)!0.5!(v11)$) {$\edge_{16}(0)$};
		\node[scale=0.3,left,inner sep=0] at ($(v12)!0.5!(v3)$) {$\edge_{17}(-1)$};
		\node[scale=0.3,left,inner sep=0] at ($(v13)!0.5!(v5)$) {$\edge_{18}(2)$};
		\node[scale=0.3,below,inner sep=0] at ($(v12)!0.5!(v13)$) {$\edge_{19}(2)$};
		\node[scale=0.3,left,inner sep=0] at ($(v14)!0.5!(v7)$) {$\edge_{20}(1)$};
		\node[scale=0.3,below,inner sep=0] at ($(v13)!0.5!(v17)$) {$\edge_{21}(2)$};
		\node[scale=0.3,left,inner sep=0] at ($(v15)!0.5!(v2)$) {$\edge_{22}(-1)$};
		\node[scale=0.3,below,inner sep=0] at ($(v14)!0.5!(v15)$) {$\edge_{23}(2)$};
		\node[scale=0.3,below,inner sep=0] at ($(v16)!0.5!(v10)$) {$\edge_{24}(0)$};
		\node[scale=0.3,below,inner sep=0] at ($(v17)!0.5!(v14)$) {$\color{blue}\edge_{25}(1)$};
		\node[scale=0.3,left,inner sep=0] at ($(v16)!0.5!(v18)$) {$\edge_{26}(1)$};
		\node[scale=0.3,left,inner sep=0] at ($(v18)!0.5!(v25)$) {$\color{blue}\edge_{27}(0)$};
		\node[scale=0.3,left,inner sep=0] at ($(v19)!0.5!(v21)$) {$\edge_{28}(1)$};
		\node[scale=0.3,below,inner sep=0] at ($(v18)!0.5!(v22)$) {$\edge_{29}(0)$};
		\node[scale=0.3,left,inner sep=0] at ($(v20)!0.5!(v17)$) {$\color{blue}\edge_{30}(0)$};
		\node[scale=0.3,left,inner sep=0] at ($(v21)!0.5!(v14)$) {$\edge_{31}(1)$};
		\node[scale=0.3,below,inner sep=0] at ($(v20)!0.5!(v23)$) {$\color{blue}\edge_{32}(1)$};
		\node[scale=0.3,below,inner sep=0] at ($(v22)!0.5!(v19)$) {$\edge_{33}(0)$};
		\node[scale=0.3,below,inner sep=0] at ($(v23)!0.5!(v21)$) {$\edge_{34}(2)$};
		\node[scale=0.3,left,inner sep=0] at ($(v22)!0.5!(v26)$) {$\edge_{35}(2)$};
		\node[scale=0.3,left,inner sep=0] at ($(v24)!0.5!(v13)$) {$\edge_{36}(2)$};
		\node[scale=0.3,left,inner sep=0] at ($(v25)!0.5!(v20)$) {$\color{blue}\edge_{37}(0)$};
		\node[scale=0.3,below,inner sep=0] at ($(v24)!0.5!(v25)$) {$\edge_{38}(2)$};
		\node[scale=0.3,left,inner sep=0] at ($(v26)!0.5!(v23)$) {$\edge_{39}(2)$};
		\node[scale=0.3,below,inner sep=0] at ($(v25)!0.5!(v26)$) {$\edge_{40}(2)$};

	 \end{scope}
\end{tikzpicture}

}
	\subcaptionbox{$\splSpace^{\bsmoothr_4}$}[0.49\textwidth]{%
	\tikzsetnextfilename{./tikz/images/mixed_reg_ex1f}%
%
%

\begin{tikzpicture}[scale=1.7, transform shape]
	 \begin{scope}
		\coordinate (v0) at (0.0000000000000000,0.0000000000000000) {};
		\coordinate (v1) at (4.0000000000000000,0.0000000000000000) {};
		\coordinate (v2) at (4.0000000000000000,4.0000000000000000) {};
		\coordinate (v3) at (0.0000000000000000,4.0000000000000000) {};
		\coordinate (v4) at (1.0000000000000000,0.0000000000000000) {};
		\coordinate (v5) at (1.0000000000000000,4.0000000000000000) {};
		\coordinate (v6) at (3.0000000000000000,0.0000000000000000) {};
		\coordinate (v7) at (3.0000000000000000,4.0000000000000000) {};
		\coordinate (v8) at (0.0000000000000000,1.0000000000000000) {};
		\coordinate (v9) at (1.0000000000000000,1.0000000000000000) {};
		\coordinate (v10) at (3.0000000000000000,1.0000000000000000) {};
		\coordinate (v11) at (4.0000000000000000,1.0000000000000000) {};
		\coordinate (v12) at (0.0000000000000000,3.0000000000000000) {};
		\coordinate (v13) at (1.0000000000000000,3.0000000000000000) {};
		\coordinate (v14) at (3.0000000000000000,3.0000000000000000) {};
		\coordinate (v15) at (4.0000000000000000,3.0000000000000000) {};
		\coordinate (v16) at (2.0000000000000000,1.0000000000000000) {};
		\coordinate (v17) at (2.0000000000000000,3.0000000000000000) {};
		\coordinate (v18) at (2.0000000000000000,1.5000000000000000) {};
		\coordinate (v19) at (3.0000000000000000,1.5000000000000000) {};
		\coordinate (v20) at (2.0000000000000000,2.5000000000000000) {};
		\coordinate (v21) at (3.0000000000000000,2.5000000000000000) {};
		\coordinate (v22) at (2.5000000000000000,1.5000000000000000) {};
		\coordinate (v23) at (2.5000000000000000,2.5000000000000000) {};
		\coordinate (v24) at (1.0000000000000000,2.0000000000000000) {};
		\coordinate (v25) at (2.0000000000000000,2.0000000000000000) {};
		\coordinate (v26) at (2.5000000000000000,2.0000000000000000) {};

		 \draw[step=1,thin] (v4.center) -- (v9.center) -- (v8.center) -- (v0.center) -- cycle;
		 \draw[step=1,thin] (v6.center) -- (v10.center) -- (v16.center) -- (v9.center) -- (v4.center) -- cycle;
		 \draw[step=1,thin] (v1.center) -- (v11.center) -- (v10.center) -- (v6.center) -- cycle;
		 \draw[step=1,thin] (v9.center) -- (v24.center) -- (v13.center) -- (v12.center) -- (v8.center) -- cycle;
		 \draw[step=1,thin] (v16.center) -- (v18.center) -- (v25.center) -- (v24.center) -- (v9.center) -- cycle;
		 \draw[step=1,thin] (v11.center) -- (v15.center) -- (v14.center) -- (v21.center) -- (v19.center) -- (v10.center) -- cycle;
		 \draw[step=1,thin] (v13.center) -- (v5.center) -- (v3.center) -- (v12.center) -- cycle;
		 \draw[step=1,thin] (v17.center) -- (v14.center) -- (v7.center) -- (v5.center) -- (v13.center) -- cycle;
		 \draw[step=1,thin] (v15.center) -- (v2.center) -- (v7.center) -- (v14.center) -- cycle;
		 \draw[step=1,thin] (v10.center) -- (v19.center) -- (v22.center) -- (v18.center) -- (v16.center) -- cycle;
		 \draw[step=1,thin] (v22.center) -- (v26.center) -- (v25.center) -- (v18.center) -- cycle;
		 \draw[step=1,thin] (v23.center) -- (v21.center) -- (v14.center) -- (v17.center) -- (v20.center) -- cycle;
		 \draw[step=1,thin] (v19.center) -- (v21.center) -- (v23.center) -- (v26.center) -- (v22.center) -- cycle;
		 \draw[step=1,thin] (v25.center) -- (v20.center) -- (v17.center) -- (v13.center) -- (v24.center) -- cycle;
		 \draw[step=1,thin] (v26.center) -- (v23.center) -- (v20.center) -- (v25.center) -- cycle;

		\node[scale=0.3,draw,black,thin,fill=white,circle,minimum size=2pt,inner sep=0] at (v0) {$\vertex_{0}$};
		\node[scale=0.3,draw,black,thin,fill=white,circle,minimum size=2pt,inner sep=0] at (v1) {$\vertex_{1}$};
		\node[scale=0.3,draw,black,thin,fill=white,circle,minimum size=2pt,inner sep=0] at (v2) {$\vertex_{2}$};
		\node[scale=0.3,draw,black,thin,fill=white,circle,minimum size=2pt,inner sep=0] at (v3) {$\vertex_{3}$};
		\node[scale=0.3,draw,black,thin,fill=white,circle,minimum size=2pt,inner sep=0] at (v4) {$\vertex_{4}$};
		\node[scale=0.3,draw,black,thin,fill=white,circle,minimum size=2pt,inner sep=0] at (v5) {$\vertex_{5}$};
		\node[scale=0.3,draw,black,thin,fill=white,circle,minimum size=2pt,inner sep=0] at (v6) {$\vertex_{6}$};
		\node[scale=0.3,draw,black,thin,fill=white,circle,minimum size=2pt,inner sep=0] at (v7) {$\vertex_{7}$};
		\node[scale=0.3,draw,black,thin,fill=white,circle,minimum size=2pt,inner sep=0] at (v8) {$\vertex_{8}$};
		\node[scale=0.3,draw,black,thin,fill=white,circle,minimum size=2pt,inner sep=0] at (v9) {$\vertex_{9}$};
		\node[scale=0.3,draw,black,thin,fill=white,circle,minimum size=2pt,inner sep=0] at (v10) {$\vertex_{10}$};
		\node[scale=0.3,draw,black,thin,fill=white,circle,minimum size=2pt,inner sep=0] at (v11) {$\vertex_{11}$};
		\node[scale=0.3,draw,black,thin,fill=white,circle,minimum size=2pt,inner sep=0] at (v12) {$\vertex_{12}$};
		\node[scale=0.3,draw,black,thin,fill=white,circle,minimum size=2pt,inner sep=0] at (v13) {$\vertex_{13}$};
		\node[scale=0.3,draw,black,thin,fill=white,circle,minimum size=2pt,inner sep=0] at (v14) {$\vertex_{14}$};
		\node[scale=0.3,draw,black,thin,fill=white,circle,minimum size=2pt,inner sep=0] at (v15) {$\vertex_{15}$};
		\node[scale=0.3,draw,black,thin,fill=white,circle,minimum size=2pt,inner sep=0] at (v16) {$\vertex_{16}$};
		\node[scale=0.3,draw,black,thin,fill=white,circle,minimum size=2pt,inner sep=0] at (v17) {$\vertex_{17}$};
		\node[scale=0.3,draw,black,thin,fill=white,circle,minimum size=2pt,inner sep=0] at (v18) {$\vertex_{18}$};
		\node[scale=0.3,draw,black,thin,fill=white,circle,minimum size=2pt,inner sep=0] at (v19) {$\vertex_{19}$};
		\node[scale=0.3,draw,black,thin,fill=white,circle,minimum size=2pt,inner sep=0] at (v20) {$\vertex_{20}$};
		\node[scale=0.3,draw,black,thin,fill=white,circle,minimum size=2pt,inner sep=0] at (v21) {$\vertex_{21}$};
		\node[scale=0.3,draw,black,thin,fill=white,circle,minimum size=2pt,inner sep=0] at (v22) {$\vertex_{22}$};
		\node[scale=0.3,draw,black,thin,fill=white,circle,minimum size=2pt,inner sep=0] at (v23) {$\vertex_{23}$};
		\node[scale=0.3,draw,black,thin,fill=white,circle,minimum size=2pt,inner sep=0] at (v24) {$\vertex_{24}$};
		\node[scale=0.3,draw,black,thin,fill=white,circle,minimum size=2pt,inner sep=0] at (v25) {$\vertex_{25}$};
		\node[scale=0.3,draw,black,thin,fill=white,circle,minimum size=2pt,inner sep=0] at (v26) {$\vertex_{26}$};

		\node[scale=0.3,below,inner sep=0] at ($(v0)!0.5!(v4)$) {$\edge_{0}(-1)$};
		\node[scale=0.3,left,inner sep=0] at ($(v1)!0.5!(v11)$) {$\edge_{1}(-1)$};
		\node[scale=0.3,below,inner sep=0] at ($(v3)!0.5!(v5)$) {$\edge_{2}(-1)$};
		\node[scale=0.3,left,inner sep=0] at ($(v0)!0.5!(v8)$) {$\edge_{3}(-1)$};
		\node[scale=0.3,below,inner sep=0] at ($(v4)!0.5!(v6)$) {$\edge_{4}(-1)$};
		\node[scale=0.3,below,inner sep=0] at ($(v5)!0.5!(v7)$) {$\edge_{5}(-1)$};
		\node[scale=0.3,left,inner sep=0] at ($(v4)!0.5!(v9)$) {$\edge_{6}(2)$};
		\node[scale=0.3,below,inner sep=0] at ($(v6)!0.5!(v1)$) {$\edge_{7}(-1)$};
		\node[scale=0.3,below,inner sep=0] at ($(v7)!0.5!(v2)$) {$\edge_{8}(-1)$};
		\node[scale=0.3,left,inner sep=0] at ($(v6)!0.5!(v10)$) {$\edge_{9}(1)$};
		\node[scale=0.3,left,inner sep=0] at ($(v8)!0.5!(v12)$) {$\edge_{10}(-1)$};
		\node[scale=0.3,left,inner sep=0] at ($(v9)!0.5!(v24)$) {$\edge_{11}(2)$};
		\node[scale=0.3,below,inner sep=0] at ($(v8)!0.5!(v9)$) {$\edge_{12}(0)$};
		\node[scale=0.3,left,inner sep=0] at ($(v10)!0.5!(v19)$) {$\edge_{13}(1)$};
		\node[scale=0.3,below,inner sep=0] at ($(v9)!0.5!(v16)$) {$\edge_{14}(0)$};
		\node[scale=0.3,left,inner sep=0] at ($(v11)!0.5!(v15)$) {$\edge_{15}(-1)$};
		\node[scale=0.3,below,inner sep=0] at ($(v10)!0.5!(v11)$) {$\edge_{16}(0)$};
		\node[scale=0.3,left,inner sep=0] at ($(v12)!0.5!(v3)$) {$\edge_{17}(-1)$};
		\node[scale=0.3,left,inner sep=0] at ($(v13)!0.5!(v5)$) {$\edge_{18}(2)$};
		\node[scale=0.3,below,inner sep=0] at ($(v12)!0.5!(v13)$) {$\edge_{19}(2)$};
		\node[scale=0.3,left,inner sep=0] at ($(v14)!0.5!(v7)$) {$\edge_{20}(1)$};
		\node[scale=0.3,below,inner sep=0] at ($(v13)!0.5!(v17)$) {$\edge_{21}(2)$};
		\node[scale=0.3,left,inner sep=0] at ($(v15)!0.5!(v2)$) {$\edge_{22}(-1)$};
		\node[scale=0.3,below,inner sep=0] at ($(v14)!0.5!(v15)$) {$\edge_{23}(2)$};
		\node[scale=0.3,below,inner sep=0] at ($(v16)!0.5!(v10)$) {$\edge_{24}(0)$};
		\node[scale=0.3,below,inner sep=0] at ($(v17)!0.5!(v14)$) {$\edge_{25}(2)$};
		\node[scale=0.3,left,inner sep=0] at ($(v16)!0.5!(v18)$) {$\edge_{26}(1)$};
		\node[scale=0.3,left,inner sep=0] at ($(v18)!0.5!(v25)$) {$\edge_{27}(1)$};
		\node[scale=0.3,left,inner sep=0] at ($(v19)!0.5!(v21)$) {$\edge_{28}(1)$};
		\node[scale=0.3,below,inner sep=0] at ($(v18)!0.5!(v22)$) {$\edge_{29}(0)$};
		\node[scale=0.3,left,inner sep=0] at ($(v20)!0.5!(v17)$) {$\color{blue}\edge_{30}(0)$};
		\node[scale=0.3,left,inner sep=0] at ($(v21)!0.5!(v14)$) {$\edge_{31}(1)$};
		\node[scale=0.3,below,inner sep=0] at ($(v20)!0.5!(v23)$) {$\edge_{32}(2)$};
		\node[scale=0.3,below,inner sep=0] at ($(v22)!0.5!(v19)$) {$\edge_{33}(0)$};
		\node[scale=0.3,below,inner sep=0] at ($(v23)!0.5!(v21)$) {$\edge_{34}(2)$};
		\node[scale=0.3,left,inner sep=0] at ($(v22)!0.5!(v26)$) {$\edge_{35}(2)$};
		\node[scale=0.3,left,inner sep=0] at ($(v24)!0.5!(v13)$) {$\edge_{36}(2)$};
		\node[scale=0.3,left,inner sep=0] at ($(v25)!0.5!(v20)$) {$\color{blue}\edge_{37}(0)$};
		\node[scale=0.3,below,inner sep=0] at ($(v24)!0.5!(v25)$) {$\edge_{38}(2)$};
		\node[scale=0.3,left,inner sep=0] at ($(v26)!0.5!(v23)$) {$\edge_{39}(2)$};
		\node[scale=0.3,below,inner sep=0] at ($(v25)!0.5!(v26)$) {$\edge_{40}(2)$};

	 \end{scope}
\end{tikzpicture}

}
	\caption{(Continued from previous page.) The above figures correspond to the bi-cubic spline space considered in Example \ref{ex:ex2}.
		The smoothness required across each edge has been annotated in parenthesis next to the edge labels.
		The smoothness distributions in Figures (b)--(f) differ from the one in Figure (a) only on the edges labelled in blue.
	}
\end{figure*}

This section presents examples of settings where Theorem \ref{thm:dimension} applies, and also where it does not.
In particular, we show that $H_0(\idealComplex^{\bsmoothr})$ can be non-trivial when the conditions of Theorem \ref{thm:dimension} are not met.

\begin{example}[PHT-splines of mixed smoothness]\label{ex:ex1}
	Consider the PHT-spline space $\splSpace^{\bsmooth}_{33}$ shown in Figure \ref{fig:ex1}(a).
	From Corollary \ref{cor:pht}, we can reduce the smoothness across any arbitrary edge and the dimension will still be given by the Euler characteristic of $\quotientComplex^{\bsmooth}$.
	One such modification is shown in Figure \ref{fig:ex1}(b) where the smoothness across edges $\edge_{27}, \edge_{34}, \edge_{35}$ and $\edge_{36}$ have been reduced.
	The dimensions of the spaces can be easily computed to be the following,
	\begin{equation*}
		\dimwp{\splSpace^{\bsmooth}} = 64\;,\qquad
		\dimwp{\splSpace^{\bsmoothr}} = 66\;.
	\end{equation*}
\end{example}

\begin{example}[Splines of mixed smoothness; hierarchical T-mesh]\label{ex:ex2}
	Consider the space of bi-cubic splines $\splSpace^{\bsmooth}_{33}$ for the smoothness distribution shown in Figure \ref{fig:ex2}(a).
	In Figures (b)--(e), we successively reduce the smoothness across the edges labelled in blue while ensuring that the conditions of Theorem \ref{thm:dimension} are met.
	As a result, at each step of smoothness reduction, we have $H_0(\idealComplex^{\bsmoothr_i}) = 0$, $i = 0, \dots, 3$.
	The dimensions of the corresponding spline spaces can then be easily computed using the Euler characteristics of complexes $\quotientComplex^{\bsmoothr_i}$,
	\begin{equation*}
	\begin{split}
		&\dimwp{\splSpace^{\bsmooth}} = 56\;,~~
		\dimwp{\splSpace^{\bsmoothr_0}} = 57\;,~~
		\dimwp{\splSpace^{\bsmoothr_1}} = 58\;,\\
		&\dimwp{\splSpace^{\bsmoothr_2}} = 58\;,~~
		\dimwp{\splSpace^{\bsmoothr_3}} = 59\;.
	\end{split}
	\end{equation*}
	On the other hand, reducing the smoothness from Figure (a) to Figure (f) does not satisfy the conditions of Theorem \ref{thm:dimension}.
	Indeed, for $A = \{ \edge_{30}, \edge_{37}, \vertex_{17}, \vertex_{20}, \vertex_{25} \}$, it can be verified that $\omega^{\bsmoothr_4}(A) = 3 < \degreev + 1 = 4$.
	In this case, it can also be computed (using Macaulay2 \cite{M2}, for instance) that $\dimwp{H_0(\idealComplex^{\bsmoothr_4})} = 1$.
\end{example}

\begin{figure}[t]
	\subcaptionbox{$\splSpace^{\bsmooth}$}[0.49\textwidth]{%
	\tikzsetnextfilename{./tikz/images/mixed_reg_ex3a_tcycle}%
%
%

\begin{tikzpicture}[scale=1.7, transform shape]
	 \begin{scope}
		\coordinate (v0) at (0.0000000000000000,0.0000000000000000) {};
		\coordinate (v1) at (4.0000000000000000,0.0000000000000000) {};
		\coordinate (v2) at (4.0000000000000000,4.0000000000000000) {};
		\coordinate (v3) at (0.0000000000000000,4.0000000000000000) {};
		\coordinate (v4) at (0.6666666666666666,0.0000000000000000) {};
		\coordinate (v5) at (0.6666666666666666,4.0000000000000000) {};
		\coordinate (v6) at (3.3333333333333335,0.0000000000000000) {};
		\coordinate (v7) at (3.3333333333333335,4.0000000000000000) {};
		\coordinate (v8) at (0.0000000000000000,0.6666666666666666) {};
		\coordinate (v9) at (0.6666666666666666,0.6666666666666666) {};
		\coordinate (v10) at (3.3333333333333335,0.6666666666666666) {};
		\coordinate (v11) at (4.0000000000000000,0.6666666666666666) {};
		\coordinate (v12) at (0.0000000000000000,3.3333333333333335) {};
		\coordinate (v13) at (0.6666666666666666,3.3333333333333335) {};
		\coordinate (v14) at (3.3333333333333335,3.3333333333333335) {};
		\coordinate (v15) at (4.0000000000000000,3.3333333333333335) {};
		\coordinate (v16) at (0.6666666666666666,1.3333333333333333) {};
		\coordinate (v17) at (3.3333333333333335,1.3333333333333333) {};
		\coordinate (v18) at (1.3333333333333333,1.3333333333333333) {};
		\coordinate (v19) at (1.3333333333333333,3.3333333333333335) {};
		\coordinate (v20) at (1.3333333333333333,2.6666666666666665) {};
		\coordinate (v21) at (3.3333333333333335,2.6666666666666665) {};
		\coordinate (v22) at (2.6666666666666665,0.6666666666666666) {};
		\coordinate (v23) at (2.6666666666666665,1.3333333333333333) {};
		\coordinate (v24) at (2.6666666666666665,2.6666666666666665) {};

		 \draw[step=1,thin] (v4.center) -- (v9.center) -- (v8.center) -- (v0.center) -- cycle;
		 \draw[step=1,thin] (v6.center) -- (v10.center) -- (v22.center) -- (v9.center) -- (v4.center) -- cycle;
		 \draw[step=1,thin] (v1.center) -- (v11.center) -- (v10.center) -- (v6.center) -- cycle;
		 \draw[step=1,thin] (v9.center) -- (v16.center) -- (v13.center) -- (v12.center) -- (v8.center) -- cycle;
		 \draw[step=1,thin] (v22.center) -- (v23.center) -- (v18.center) -- (v16.center) -- (v9.center) -- cycle;
		 \draw[step=1,thin] (v11.center) -- (v15.center) -- (v14.center) -- (v21.center) -- (v17.center) -- (v10.center) -- cycle;
		 \draw[step=1,thin] (v13.center) -- (v5.center) -- (v3.center) -- (v12.center) -- cycle;
		 \draw[step=1,thin] (v19.center) -- (v14.center) -- (v7.center) -- (v5.center) -- (v13.center) -- cycle;
		 \draw[step=1,thin] (v15.center) -- (v2.center) -- (v7.center) -- (v14.center) -- cycle;
		 \draw[step=1,thin] (v18.center) -- (v20.center) -- (v19.center) -- (v13.center) -- (v16.center) -- cycle;
		 \draw[step=1,thin] (v23.center) -- (v24.center) -- (v20.center) -- (v18.center) -- cycle;
		 \draw[step=1,thin] (v24.center) -- (v21.center) -- (v14.center) -- (v19.center) -- (v20.center) -- cycle;

		\node[scale=0.3,draw,black,thin,fill=white,circle,minimum size=2pt,inner sep=0] at (v0) {$\vertex_{0}$};
		\node[scale=0.3,draw,black,thin,fill=white,circle,minimum size=2pt,inner sep=0] at (v1) {$\vertex_{1}$};
		\node[scale=0.3,draw,black,thin,fill=white,circle,minimum size=2pt,inner sep=0] at (v2) {$\vertex_{2}$};
		\node[scale=0.3,draw,black,thin,fill=white,circle,minimum size=2pt,inner sep=0] at (v3) {$\vertex_{3}$};
		\node[scale=0.3,draw,black,thin,fill=white,circle,minimum size=2pt,inner sep=0] at (v4) {$\vertex_{4}$};
		\node[scale=0.3,draw,black,thin,fill=white,circle,minimum size=2pt,inner sep=0] at (v5) {$\vertex_{5}$};
		\node[scale=0.3,draw,black,thin,fill=white,circle,minimum size=2pt,inner sep=0] at (v6) {$\vertex_{6}$};
		\node[scale=0.3,draw,black,thin,fill=white,circle,minimum size=2pt,inner sep=0] at (v7) {$\vertex_{7}$};
		\node[scale=0.3,draw,black,thin,fill=white,circle,minimum size=2pt,inner sep=0] at (v8) {$\vertex_{8}$};
		\node[scale=0.3,draw,black,thin,fill=white,circle,minimum size=2pt,inner sep=0] at (v9) {$\vertex_{9}$};
		\node[scale=0.3,draw,black,thin,fill=white,circle,minimum size=2pt,inner sep=0] at (v10) {$\vertex_{10}$};
		\node[scale=0.3,draw,black,thin,fill=white,circle,minimum size=2pt,inner sep=0] at (v11) {$\vertex_{11}$};
		\node[scale=0.3,draw,black,thin,fill=white,circle,minimum size=2pt,inner sep=0] at (v12) {$\vertex_{12}$};
		\node[scale=0.3,draw,black,thin,fill=white,circle,minimum size=2pt,inner sep=0] at (v13) {$\vertex_{13}$};
		\node[scale=0.3,draw,black,thin,fill=white,circle,minimum size=2pt,inner sep=0] at (v14) {$\vertex_{14}$};
		\node[scale=0.3,draw,black,thin,fill=white,circle,minimum size=2pt,inner sep=0] at (v15) {$\vertex_{15}$};
		\node[scale=0.3,draw,black,thin,fill=white,circle,minimum size=2pt,inner sep=0] at (v16) {$\vertex_{16}$};
		\node[scale=0.3,draw,black,thin,fill=white,circle,minimum size=2pt,inner sep=0] at (v18) {$\vertex_{17}$};
		\node[scale=0.3,draw,black,thin,fill=white,circle,minimum size=2pt,inner sep=0] at (v19) {$\vertex_{18}$};
		\node[scale=0.3,draw,black,thin,fill=white,circle,minimum size=2pt,inner sep=0] at (v20) {$\vertex_{19}$};
		\node[scale=0.3,draw,black,thin,fill=white,circle,minimum size=2pt,inner sep=0] at (v21) {$\vertex_{20}$};
		\node[scale=0.3,draw,black,thin,fill=white,circle,minimum size=2pt,inner sep=0] at (v22) {$\vertex_{21}$};
		\node[scale=0.3,draw,black,thin,fill=white,circle,minimum size=2pt,inner sep=0] at (v23) {$\vertex_{22}$};
		\node[scale=0.3,draw,black,thin,fill=white,circle,minimum size=2pt,inner sep=0] at (v24) {$\vertex_{23}$};

		\node[scale=0.3,below,inner sep=0] at ($(v0)!0.5!(v4)$) {$\edge_{0}(-1)$};
		\node[scale=0.3,left,inner sep=0] at ($(v1)!0.5!(v11)$) {$\edge_{1}(-1)$};
		\node[scale=0.3,below,inner sep=0] at ($(v3)!0.5!(v5)$) {$\edge_{2}(-1)$};
		\node[scale=0.3,left,inner sep=0] at ($(v0)!0.5!(v8)$) {$\edge_{3}(-1)$};
		\node[scale=0.3,below,inner sep=0] at ($(v4)!0.5!(v6)$) {$\edge_{4}(-1)$};
		\node[scale=0.3,below,inner sep=0] at ($(v5)!0.5!(v7)$) {$\edge_{5}(-1)$};
		\node[scale=0.3,left,inner sep=0] at ($(v4)!0.5!(v9)$) {$\edge_{6}(1)$};
		\node[scale=0.3,below,inner sep=0] at ($(v6)!0.5!(v1)$) {$\edge_{7}(-1)$};
		\node[scale=0.3,below,inner sep=0] at ($(v7)!0.5!(v2)$) {$\edge_{8}(-1)$};
		\node[scale=0.3,left,inner sep=0] at ($(v6)!0.5!(v10)$) {$\edge_{9}(1)$};
		\node[scale=0.3,left,inner sep=0] at ($(v8)!0.5!(v12)$) {$\edge_{10}(-1)$};
		\node[scale=0.3,left,inner sep=0] at ($(v9)!0.5!(v16)$) {$\edge_{11}(1)$};
		\node[scale=0.3,below,inner sep=0] at ($(v8)!0.5!(v9)$) {$\edge_{12}(1)$};
		\node[scale=0.3,left,inner sep=0] at ($(v10)!0.5!(v21)$) {$\edge_{13}(1)$};
		\node[scale=0.3,below,inner sep=0] at ($(v9)!0.5!(v22)$) {$\edge_{14}(1)$};
		\node[scale=0.3,left,inner sep=0] at ($(v11)!0.5!(v15)$) {$\edge_{15}(-1)$};
		\node[scale=0.3,below,inner sep=0] at ($(v10)!0.5!(v11)$) {$\edge_{16}(1)$};
		\node[scale=0.3,left,inner sep=0] at ($(v12)!0.5!(v3)$) {$\edge_{17}(-1)$};
		\node[scale=0.3,left,inner sep=0] at ($(v13)!0.5!(v5)$) {$\edge_{18}(1)$};
		\node[scale=0.3,below,inner sep=0] at ($(v12)!0.5!(v13)$) {$\edge_{19}(1)$};
		\node[scale=0.3,left,inner sep=0] at ($(v14)!0.5!(v7)$) {$\edge_{20}(1)$};
		\node[scale=0.3,below,inner sep=0] at ($(v13)!0.5!(v19)$) {$\edge_{21}(1)$};
		\node[scale=0.3,left,inner sep=0] at ($(v15)!0.5!(v2)$) {$\edge_{22}(-1)$};
		\node[scale=0.3,below,inner sep=0] at ($(v14)!0.5!(v15)$) {$\edge_{23}(1)$};
		\node[scale=0.3,left,inner sep=0] at ($(v16)!0.5!(v13)$) {$\edge_{24}(1)$};
		\node[scale=0.3,below,inner sep=0] at ($(v16)!0.5!(v18)$) {$\edge_{25}(1)$};
		\node[scale=0.3,below,inner sep=0] at ($(v18)!0.5!(v23)$) {$\edge_{26}(1)$};
		\node[scale=0.3,below,inner sep=0] at ($(v19)!0.5!(v14)$) {$\edge_{27}(1)$};
		\node[scale=0.3,left,inner sep=0] at ($(v18)!0.5!(v20)$) {$\edge_{28}(1)$};
		\node[scale=0.3,left,inner sep=0] at ($(v20)!0.5!(v19)$) {$\edge_{29}(1)$};
		\node[scale=0.3,left,inner sep=0] at ($(v21)!0.5!(v14)$) {$\edge_{30}(1)$};
		\node[scale=0.3,below,inner sep=0] at ($(v20)!0.5!(v24)$) {$\edge_{31}(1)$};
		\node[scale=0.3,below,inner sep=0] at ($(v22)!0.5!(v10)$) {$\edge_{32}(1)$};
		\node[scale=0.3,left,inner sep=0] at ($(v22)!0.5!(v23)$) {$\edge_{33}(1)$};
		\node[scale=0.3,below,inner sep=0] at ($(v24)!0.5!(v21)$) {$\edge_{34}(1)$};
		\node[scale=0.3,left,inner sep=0] at ($(v23)!0.5!(v24)$) {$\edge_{35}(1)$};

	 \end{scope}
\end{tikzpicture}

}
	\subcaptionbox{$\splSpace^{\bsmoothr}$}[0.49\textwidth]{%
	\tikzsetnextfilename{./tikz/images/mixed_reg_ex3c_tcycle}%
%
%
	
	\begin{tikzpicture}[scale=1.7, transform shape]
	\begin{scope}
	\coordinate (v0) at (0.0000000000000000,0.0000000000000000) {};
	\coordinate (v1) at (4.0000000000000000,0.0000000000000000) {};
	\coordinate (v2) at (4.0000000000000000,4.0000000000000000) {};
	\coordinate (v3) at (0.0000000000000000,4.0000000000000000) {};
	\coordinate (v4) at (0.6666666666666666,0.0000000000000000) {};
	\coordinate (v5) at (0.6666666666666666,4.0000000000000000) {};
	\coordinate (v6) at (3.3333333333333335,0.0000000000000000) {};
	\coordinate (v7) at (3.3333333333333335,4.0000000000000000) {};
	\coordinate (v8) at (0.0000000000000000,0.6666666666666666) {};
	\coordinate (v9) at (0.6666666666666666,0.6666666666666666) {};
	\coordinate (v10) at (3.3333333333333335,0.6666666666666666) {};
	\coordinate (v11) at (4.0000000000000000,0.6666666666666666) {};
	\coordinate (v12) at (0.0000000000000000,3.3333333333333335) {};
	\coordinate (v13) at (0.6666666666666666,3.3333333333333335) {};
	\coordinate (v14) at (3.3333333333333335,3.3333333333333335) {};
	\coordinate (v15) at (4.0000000000000000,3.3333333333333335) {};
	\coordinate (v16) at (0.6666666666666666,1.3333333333333333) {};
	\coordinate (v17) at (3.3333333333333335,1.3333333333333333) {};
	\coordinate (v18) at (1.3333333333333333,1.3333333333333333) {};
	\coordinate (v19) at (1.3333333333333333,3.3333333333333335) {};
	\coordinate (v20) at (1.3333333333333333,2.6666666666666665) {};
	\coordinate (v21) at (3.3333333333333335,2.6666666666666665) {};
	\coordinate (v22) at (2.6666666666666665,0.6666666666666666) {};
	\coordinate (v23) at (2.6666666666666665,1.3333333333333333) {};
	\coordinate (v24) at (2.6666666666666665,2.6666666666666665) {};
	
	\draw[step=1,thin] (v4.center) -- (v9.center) -- (v8.center) -- (v0.center) -- cycle;
	\draw[step=1,thin] (v6.center) -- (v10.center) -- (v22.center) -- (v9.center) -- (v4.center) -- cycle;
	\draw[step=1,thin] (v1.center) -- (v11.center) -- (v10.center) -- (v6.center) -- cycle;
	\draw[step=1,thin] (v9.center) -- (v16.center) -- (v13.center) -- (v12.center) -- (v8.center) -- cycle;
	\draw[step=1,thin] (v22.center) -- (v23.center) -- (v18.center) -- (v16.center) -- (v9.center) -- cycle;
	\draw[step=1,thin] (v11.center) -- (v15.center) -- (v14.center) -- (v21.center) -- (v17.center) -- (v10.center) -- cycle;
	\draw[step=1,thin] (v13.center) -- (v5.center) -- (v3.center) -- (v12.center) -- cycle;
	\draw[step=1,thin] (v19.center) -- (v14.center) -- (v7.center) -- (v5.center) -- (v13.center) -- cycle;
	\draw[step=1,thin] (v15.center) -- (v2.center) -- (v7.center) -- (v14.center) -- cycle;
	\draw[step=1,thin] (v18.center) -- (v20.center) -- (v19.center) -- (v13.center) -- (v16.center) -- cycle;
	\draw[step=1,thin] (v23.center) -- (v24.center) -- (v20.center) -- (v18.center) -- cycle;
	\draw[step=1,thin] (v24.center) -- (v21.center) -- (v14.center) -- (v19.center) -- (v20.center) -- cycle;
	
	\node[scale=0.3,draw,black,thin,fill=white,circle,minimum size=2pt,inner sep=0] at (v0) {$\vertex_{0}$};
	\node[scale=0.3,draw,black,thin,fill=white,circle,minimum size=2pt,inner sep=0] at (v1) {$\vertex_{1}$};
	\node[scale=0.3,draw,black,thin,fill=white,circle,minimum size=2pt,inner sep=0] at (v2) {$\vertex_{2}$};
	\node[scale=0.3,draw,black,thin,fill=white,circle,minimum size=2pt,inner sep=0] at (v3) {$\vertex_{3}$};
	\node[scale=0.3,draw,black,thin,fill=white,circle,minimum size=2pt,inner sep=0] at (v4) {$\vertex_{4}$};
	\node[scale=0.3,draw,black,thin,fill=white,circle,minimum size=2pt,inner sep=0] at (v5) {$\vertex_{5}$};
	\node[scale=0.3,draw,black,thin,fill=white,circle,minimum size=2pt,inner sep=0] at (v6) {$\vertex_{6}$};
	\node[scale=0.3,draw,black,thin,fill=white,circle,minimum size=2pt,inner sep=0] at (v7) {$\vertex_{7}$};
	\node[scale=0.3,draw,black,thin,fill=white,circle,minimum size=2pt,inner sep=0] at (v8) {$\vertex_{8}$};
	\node[scale=0.3,draw,black,thin,fill=white,circle,minimum size=2pt,inner sep=0] at (v9) {$\vertex_{9}$};
	\node[scale=0.3,draw,black,thin,fill=white,circle,minimum size=2pt,inner sep=0] at (v10) {$\vertex_{10}$};
	\node[scale=0.3,draw,black,thin,fill=white,circle,minimum size=2pt,inner sep=0] at (v11) {$\vertex_{11}$};
	\node[scale=0.3,draw,black,thin,fill=white,circle,minimum size=2pt,inner sep=0] at (v12) {$\vertex_{12}$};
	\node[scale=0.3,draw,black,thin,fill=white,circle,minimum size=2pt,inner sep=0] at (v13) {$\vertex_{13}$};
	\node[scale=0.3,draw,black,thin,fill=white,circle,minimum size=2pt,inner sep=0] at (v14) {$\vertex_{14}$};
	\node[scale=0.3,draw,black,thin,fill=white,circle,minimum size=2pt,inner sep=0] at (v15) {$\vertex_{15}$};
	\node[scale=0.3,draw,black,thin,fill=white,circle,minimum size=2pt,inner sep=0] at (v16) {$\vertex_{16}$};
	\node[scale=0.3,draw,black,thin,fill=white,circle,minimum size=2pt,inner sep=0] at (v18) {$\vertex_{17}$};
	\node[scale=0.3,draw,black,thin,fill=white,circle,minimum size=2pt,inner sep=0] at (v19) {$\vertex_{18}$};
	\node[scale=0.3,draw,black,thin,fill=white,circle,minimum size=2pt,inner sep=0] at (v20) {$\vertex_{19}$};
	\node[scale=0.3,draw,black,thin,fill=white,circle,minimum size=2pt,inner sep=0] at (v21) {$\vertex_{20}$};
	\node[scale=0.3,draw,black,thin,fill=white,circle,minimum size=2pt,inner sep=0] at (v22) {$\vertex_{21}$};
	\node[scale=0.3,draw,black,thin,fill=white,circle,minimum size=2pt,inner sep=0] at (v23) {$\vertex_{22}$};
	\node[scale=0.3,draw,black,thin,fill=white,circle,minimum size=2pt,inner sep=0] at (v24) {$\vertex_{23}$};
	
	\node[scale=0.3,below,inner sep=0] at ($(v0)!0.5!(v4)$) {$\edge_{0}(-1)$};
	\node[scale=0.3,left,inner sep=0] at ($(v1)!0.5!(v11)$) {$\edge_{1}(-1)$};
	\node[scale=0.3,below,inner sep=0] at ($(v3)!0.5!(v5)$) {$\edge_{2}(-1)$};
	\node[scale=0.3,left,inner sep=0] at ($(v0)!0.5!(v8)$) {$\edge_{3}(-1)$};
	\node[scale=0.3,below,inner sep=0] at ($(v4)!0.5!(v6)$) {$\edge_{4}(-1)$};
	\node[scale=0.3,below,inner sep=0] at ($(v5)!0.5!(v7)$) {$\edge_{5}(-1)$};
	\node[scale=0.3,left,inner sep=0] at ($(v4)!0.5!(v9)$) {$\edge_{6}(1)$};
	\node[scale=0.3,below,inner sep=0] at ($(v6)!0.5!(v1)$) {$\edge_{7}(-1)$};
	\node[scale=0.3,below,inner sep=0] at ($(v7)!0.5!(v2)$) {$\edge_{8}(-1)$};
	\node[scale=0.3,left,inner sep=0] at ($(v6)!0.5!(v10)$) {$\edge_{9}(1)$};
	\node[scale=0.3,left,inner sep=0] at ($(v8)!0.5!(v12)$) {$\edge_{10}(-1)$};
	\node[scale=0.3,left,inner sep=0] at ($(v9)!0.5!(v16)$) {$\edge_{11}(1)$};
	\node[scale=0.3,below,inner sep=0] at ($(v8)!0.5!(v9)$) {$\edge_{12}(1)$};
	\node[scale=0.3,left,inner sep=0] at ($(v10)!0.5!(v21)$) {$\edge_{13}(1)$};
	\node[scale=0.3,below,inner sep=0] at ($(v9)!0.5!(v22)$) {$\edge_{14}(1)$};
	\node[scale=0.3,left,inner sep=0] at ($(v11)!0.5!(v15)$) {$\edge_{15}(-1)$};
	\node[scale=0.3,below,inner sep=0] at ($(v10)!0.5!(v11)$) {$\edge_{16}(1)$};
	\node[scale=0.3,left,inner sep=0] at ($(v12)!0.5!(v3)$) {$\edge_{17}(-1)$};
	\node[scale=0.3,left,inner sep=0] at ($(v13)!0.5!(v5)$) {$\edge_{18}(1)$};
	\node[scale=0.3,below,inner sep=0] at ($(v12)!0.5!(v13)$) {$\edge_{19}(1)$};
	\node[scale=0.3,left,inner sep=0] at ($(v14)!0.5!(v7)$) {$\edge_{20}(1)$};
	\node[scale=0.3,below,inner sep=0] at ($(v13)!0.5!(v19)$) {$\color{blue}\edge_{21}(0)$};
	\node[scale=0.3,left,inner sep=0] at ($(v15)!0.5!(v2)$) {$\edge_{22}(-1)$};
	\node[scale=0.3,below,inner sep=0] at ($(v14)!0.5!(v15)$) {$\edge_{23}(1)$};
	\node[scale=0.3,left,inner sep=0] at ($(v16)!0.5!(v13)$) {$\edge_{24}(1)$};
	\node[scale=0.3,below,inner sep=0] at ($(v16)!0.5!(v18)$) {$\color{blue}\edge_{25}(-1)$};
	\node[scale=0.3,below,inner sep=0] at ($(v18)!0.5!(v23)$) {$\color{blue}\edge_{26}(-1)$};
	\node[scale=0.3,below,inner sep=0] at ($(v19)!0.5!(v14)$) {$\color{blue}\edge_{27}(0)$};
	\node[scale=0.3,left,inner sep=0] at ($(v18)!0.5!(v20)$) {$\edge_{28}(1)$};
	\node[scale=0.3,left,inner sep=0] at ($(v20)!0.5!(v19)$) {$\color{blue}\edge_{29}(0)$};
	\node[scale=0.3,left,inner sep=0] at ($(v21)!0.5!(v14)$) {$\edge_{30}(1)$};
	\node[scale=0.3,below,inner sep=0] at ($(v20)!0.5!(v24)$) {$\edge_{31}(1)$};
	\node[scale=0.3,below,inner sep=0] at ($(v22)!0.5!(v10)$) {$\edge_{32}(1)$};
	\node[scale=0.3,left,inner sep=0] at ($(v22)!0.5!(v23)$) {$\edge_{33}(1)$};
	\node[scale=0.3,below,inner sep=0] at ($(v24)!0.5!(v21)$) {$\edge_{34}(1)$};
	\node[scale=0.3,left,inner sep=0] at ($(v23)!0.5!(v24)$) {$\edge_{35}(1)$};
	
	\end{scope}
	\end{tikzpicture}
	
}
	\caption{The above figures correspond to the setting considered in Example \ref{ex:ex3}.
		The smoothness required across each edge has been annotated in parenthesis next to the edge labels.
		Figure (a) shows the initial smoothness distribution, while Figure (b) shows the modified initial distributions; the modifications are limited to the edges labelled in blue.
	}
	\label{fig:ex3}
\end{figure}

\begin{example}[Splines of mixed smoothness; non-hierarchical T-mesh]\label{ex:ex3}
	Consider the space of bi-cubic splines $\splSpace^{\bsmooth}_{33}$ for the smoothness distribution shown in Figure \ref{fig:ex3}(a).
	Note that in this case the T-mesh cannot be constructed hierarchically.
	Nevertheless, it is possible to use results from \cite{li2019instability} to verify that $H_0(\idealComplex^{\bsmooth}) = 0$ and $\dimwp{\splSpace^{\bsmooth}} = \euler{\quotientComplex^{\bsmooth}} = 64$.
	Then, using Theorem \ref{thm:pht}, we see that we can reduce the smoothness across any subset of edges and maintain $H_0(\idealComplex^{\bsmoothr}) = 0$ for the new smoothness distribution $\bsmoothr$.
	One such case has been shown in Figure \ref{fig:ex3}(b), and the corresponding dimension of the space is given by $\dimwp{\splSpace^{\bsmoothr}} = \euler{\quotientComplex^{\bsmoothr}} = 71$.
\end{example}
	
	\section{Conclusions}\label{sec:conclusions}

Smooth polynomial splines are immensely versatile and are routinely utilized for challenging applications in, for instance, geometric modelling and computational analysis.
However, certain tasks also require working with splines of reduced smoothness, at least locally; e.g., geometric objects containing $C^0$ feature lines, solutions to physical problems that show localized discontinuities.
Local control over the smoothness can be very beneficial in such cases and can lead to great improvements in the quality of the output.
In this paper we have studied the dimension of bi-degree splines on T-meshes when different orders of smoothness are required across different mesh edges.
Reducing the problem to an essentially univariate problem, we have provided sufficient conditions that ensure that the dimension can be combinatorially computed using only local information.
The conditions are constructive in nature and have simple geometric interpretation.
A forthcoming paper will focus on the construction of a normalized B-spline-like basis for such spline spaces.
	
	\bibliographystyle{plain}
	\bibliography{bibliography}

\end{document}